\begin{document}  % put it here for the irma style

\title%[Trace coordinates on Fricke spaces]
{Trace coordinates on Fricke spaces of some simple hyperbolic surfaces}

\author{William M.  Goldman\thanks{ Work partially supported by
National Science Foundation grant DMS070781, a Semester Research Award
(Fall 2005) from the General Research Board of the University of
Maryland, and the Oswald Veblen Fund at the Institute for Advanced
Study.} }

\address{
Department of Mathematics \\
University of Maryland \\
College Park, MD 20742 USA \\
email:\,\tt{wmg@math.umd.edu}}

\maketitle

\renewcommand\theequation{\arabic{section}.\arabic{equation}}
 
\theoremstyle{definition} %%% for statements in roman typeface

\newtheorem{definition}{Definition}[subsection]
\newtheorem{remark}[definition]{Remark}
\newtheorem{example}[definition]{Example}

%%% theorems stated in introduction

\newtheorem*{theoremA}{Theorem A}  %%% for statements without numbering
\newtheorem*{theoremB}{Theorem B}  %%% for statements without numbering
\newtheorem*{theoremC}{Theorem C}  %%% for statements without numbering
\newtheorem*{theoremD}{Theorem D}  %%% for statements without numbering

\newtheorem*{notation}{Notation}  %%% for statements without numbering

\theoremstyle{plain}      %%% for statements in italic typeface

\newtheorem{proposition}[definition]{Proposition}
\newtheorem{theorem}[definition]{Theorem}
\newtheorem{corollary}[definition]{Corollary}
\newtheorem{lemma}[definition]{Lemma}
\newtheorem*{conjecture}{Conjecture}

\numberwithin{equation}{subsection}

\date{\today}

% \setcounter{tocdepth}{1}
% \numberwithin{equation}{subsection}
%\numberwithin{theorem}{subsubsection}
% \numberwithin{proposition}{subsubsection}
% %

\newcommand{\R}{{\mathbb R}}
\newcommand{\C}{{\mathbb C}}
\newcommand{\Z}{\mathbb{Z}}
\newcommand{\F}{\mathbb{F}}
\newcommand{\kk}{\mathsf{k}}
\newcommand{\HH}{\mathbb{H}}

\newcommand{\interior}{\mathsf{int}}

\newcommand{\Id}{{\mathbb I}}
\newcommand{\Ker}{\mathsf{Ker}}
\newcommand{\Aut}{\mathsf{Aut}}
\newcommand{\Ad}{\mathsf{Ad}}
\newcommand{\Inn}{\mathsf{Inn}}
\newcommand{\tr}{\mathsf{tr}}
\renewcommand{\det}{\mathsf{det}}
\newcommand{\Fix}{\mathsf{Fix}}
\newcommand{\Sym}{\mathsf{Sym}}
\newcommand{\Hom}{\mathsf{Hom}}

% Notations for Lie groups
\newcommand{\GL}[1]{{\mathsf{GL}}({#1})}
\newcommand{\GLt}{\mathsf{GL}(2)}
\newcommand{\GLtC}{{\GL{2,\C}}}
\newcommand{\GLthC}{{\GL{3,\C}}}
\newcommand{\GLtR}{\mathsf{GL}(2,\R)}

\newcommand{\SL}[1]{{\mathsf{SL}}({#1})}
\newcommand{\SLt}{{\SL{2}}}
\newcommand{\SLtC}{{\SL{2,\C}}}
\newcommand{\SLthC}{{\SL{3,\C}}}
\newcommand{\SLtR}{{\SL{2,\R}}}

\newcommand{\PGL}[1]{{\mathsf{PGL}}({#1})}
\newcommand{\PGLt}{{\PGL{2}}}
\newcommand{\PGLtR}{\mathsf{PGL}(2,\R)}
\newcommand{\PGLtC}{{\PGL{2,\C}}}

\newcommand{\PSL}[1]{{\mathsf{PSL}}({#1})}
\newcommand{\PSLt}{{\PSL{2}}}
\newcommand{\PSLtC}{\mathsf{PSL}(2,\C)}
\newcommand{\PSLtR}{\mathsf{PSL}(2,\R)}

\newcommand{\SUt}{{\mathsf{SU}(2)}}
\newcommand{\SOt}{{\mathsf{SO}(2)}}
\newcommand{\SOot}{{\mathsf{SO}(1,2)}}
\newcommand{\SOto}{{\mathsf{SO}(2,1)}}
\newcommand{\SOoo}{{\mathsf{SO}(1,1)}}
\newcommand{\SOthC}{\mathsf{SO}(3,\C)}

\newcommand{\Mat}{{\mathsf{M}_2(\C)}}
\newcommand{\Oto}{{\mathsf{O}(2,1)}}
\newcommand{\OB}{\mathsf{O}(\C^3,\BB)}
\newcommand{\SOB}{\mathsf{SO}(\C^3,\BB)}

% Lie algebras
\renewcommand{\sl}{\mathfrak{sl}}
\newcommand{\slt}{\sl(2)}
\newcommand{\gl}{\mathfrak{gl}}
\newcommand{\gltR}{\gl(2,\R)}
\newcommand{\sltR}{\sl(2,\R)}
\newcommand{\sltC}{\sl(2,\C)}

\renewcommand{\P}{\mathbb{P}}
\renewcommand{\Id}{\mathbb{I}}

\newcommand{\CP}{\mathbb{CP}}
\newcommand{\RP}{\mathbb{RP}}

\newcommand{\rxy}{\rho_{XY}}
\newcommand{\ryz}{\rho_{YZ}}
\newcommand{\rzx}{\rho_{ZX}}
\newcommand{\lxy}{l_{XY}}
\newcommand{\lyz}{l_{YZ}}
\newcommand{\lzx}{l_{ZX}}

\newcommand{\bt}{{\mathsf{t}}}
\newcommand{\hmg}{\Hom(\pi ,G)}
\newcommand{\W}{{\mathcal{W}}}
\newcommand{\BB}{{\mathbb{B}}}

\newcommand{\Ht}{\mathsf{H}^2}
\newcommand{\Hth}{\mathsf{H}^3}
\newcommand{\Rto}{\R^3_1}

\newcommand{\Inv}{\mathsf{Inv}}
\newcommand{\jj}{\mathsf{j}}

\newcommand{\Ss}{\mathfrak{S}}
\newcommand{\dst}{\mathsf{dS}^2_1}

\newcommand{\Hh}{\mathcal{H}}
\newcommand{\hh}{\mathfrak{H}}
\newcommand{\Eta}{\mathbf{\eta}}
\newcommand{\Rho}{\mathbf{\rho}}

\newcommand{\hexagon}{\mathsf{Hex}}
\newcommand{\rank}{\mathsf{rank}}
\newcommand{\Lie}{\mathsf{Lie}}

\newcommand{\Hx}{\Hh_{\widehat{X}}}
\newcommand{\Hy}{\Hh_{\widehat{Y}}}
\newcommand{\Hz}{\Hh_{\widehat{Z}}}

\newcommand{\hS}{\hat{\Sigma}}
\newcommand{\hP}{\hat{\Pi}}

\newcommand{\csch}{\operatorname{csch}}

\newcommand{\Fricke}{\mathfrak{F}}

\newcommand{\trho}{\tilde{\rho}}
\newcommand{\tG}{\tilde{G}}
\newcommand{\zz}{\mathfrak{z}}
\newcommand{\RR}{\mathfrak{R}}

\setcounter{page}{1}
\tableofcontents   

\section{Introduction}

The work of Fricke-Klein~\cite{FrickeKlein} develops the deformation
theory of hyperbolic structures on a surface $\Sigma$ in terms of the space
of representations of its fundamental group $\pi=\pi_1(\Sigma)$ in
$\SLtC$. This leads to an algebraic structure on the deformation spaces.
Here we expound this theory from a modern viewpoint.

We emphasize the close relationship between algebra and 
geometry. In particular algebraic properties of $2\times 2$ matrices
are applied to hyperbolic geometry in low dimensions. Our main object
of interest is the {\em deformation space\/} of hyperbolic structures
on a fixed compact surface-with-boundary $\Sigma$. The points of 
this deformation space correspond to equivalence classes of 
{\em marked\/} hyperbolic structures on $\interior(\Sigma)$
where the ends are either cusps (complete ends of finite area)
or are collar neighborhoods of closed geodesics. Such deformation
spaces have been named {\em Fricke spaces\/} by 
Bers-Gardiner~\cite{BersGardiner}. When $\Sigma$ is closed,
then the uniformization theorem identifies hyperbolic structures
with conformal structures and the Fricke space is commonly
identified with the {\em Teichm\"uller space\/} of marked 
{\em conformal\/} structures on $\Sigma$.

Hyperbolic structures are a special case of {\em locally homogeneous
geometric structures\/} modelled on a homogeneous space of a Lie
group $G$. These structures were first systematically defined by
Ehresmann~\cite{Ehresmann}, and they determine representations
of the fundamental group $\pi_1(\Sigma)$ in $G$. Equivalence
classes of structures determine {\em equivalence classes\/}
of representations, and the first part of this chapter deals
with the algebraic problem of determining the moduli space
of equivalence classes of pairs of unimodular $2\times 2$
matrices.

Our starting point is the following well-known yet fundamental fact
when $\pi$ is a free group $\F_2$ of rank two. This fact may be found
in the book of Fricke and Klein~\cite{FrickeKlein} and the even
earlier paper of Vogt~\cite{Vogt}. 
%%%%% Unfortunately I am not aware
%%%%% of the history of this basic result beyond these references,
%%%%% although I suspect that 
Perhaps 
much was known at the time about
invariants of $2\times 2$ matrices among the early practitioners
of what has since become known as ``classical invariant theory''.
Now this algebraic work is contained in the powerful general theory
developed by Procesi~\cite{Procesi} and others, 
which in a sense completes the work begun in the 19th century. 

Procesi's theorem implies that the ring of invariants on the
space of representations $\pi\xrightarrow{\rho} \SLtC$ is generated
by {\em characters\/}
\begin{equation*}
\rho \stackrel{t_\gamma}\longmapsto \tr\big(\rho(\gamma)\big), 
\end{equation*}
where $\gamma\in\pi$, and hence we call this ring the {\em character
ring.\/} We begin by proving the elementary fact that character ring
$\RR_1$ of a cyclic group is the polynomial ring $\C[\tr]$ generated
by the {\em trace function\/} $\SLtC\xrightarrow{\tr}\C$. From this we
proceed to the basic fact, that the 
character ring $\RR_2$ of the rank two free group $\F_2$ is a
polynomial ring on {\em three variables:\/} 

\index{character ring}

\begin{theoremA}[Vogt~\cite{Vogt},\ Fricke~\cite{Fricke}]
\label{thm:vogt}
Let $\SLtC\times \SLtC\xrightarrow{f}\C$ be a regular function which is invariant under
the diagonal action of $\SLtC$ by conjugation. 
There exists a polynomial function 
$F(x,y,z)\in\C[x,y,z]$ such that 
\begin{equation*}
f(\xi,\eta) = F(\tr(\xi),\tr(\eta),\tr(\xi\eta)). 
\end{equation*}
%%% Furthermore, for all  $(x,y,z)\in\C^3$, there exists $(\xi,\eta)\in H$ such that 
Furthermore, for all  $(x,y,z)\in\C^3$, there exists $(\xi,\eta)\in \SLtC\times\SLtC$ such that %%%
\begin{equation*}
\bmatrix x \\ y \\z \endbmatrix = 
\bmatrix \tr(\xi) \\ \tr(\eta) \\\tr(\xi\eta) \endbmatrix.  
\end{equation*}
Conversely, if 
% $\kappa(x,y,z)\neq 2$ 
% \begin{equation}\label{eq:trirred}
$x^2 + y^2 + z^2 - x y z \neq 4$ 
%\end{equation}
and 
%%%$(\xi,\eta),(\xi',\eta')\in H$ satisfy
$(\xi,\eta),(\xi',\eta')\in\SLtC\times\SLtC $ satisfy %%%
\begin{equation*}
\bmatrix \tr(\xi) \\ \tr(\eta) \\ \tr(\xi\eta) \endbmatrix = 
\bmatrix \tr(\xi') \\ \tr(\eta') \\ \tr(\xi'\eta') \endbmatrix = 
\bmatrix x \\ y \\ z \endbmatrix,  
\end{equation*}
% \begin{equation}\label{eq:equalchars}
% \bmatrix \tr(\xi) \\ \tr(\eta) \\ \tr(\xi\eta) \endbmatrix = 
% \bmatrix \tr(\xi') \\ \tr(\eta') \\ \tr(\xi'\eta') \endbmatrix = 
% \bmatrix x \\ y \\ z \endbmatrix,  
% \end{equation}
then $(\xi',\eta') = g.(\xi,\eta)$ for some $g\in G$.
\end{theoremA}

Algebro-geometrically, %%%%% this theorem states 
Theorem~A asserts that the 
{\em $\SLtC$\/}-character variety $V_2$ of a free group of rank two equals
$\C^3$. This will be our basic algebraic tool for describing moduli spaces
of structures on the surface $\Sigma$ and their automorphisms arising
from transformations of $\Sigma$.  

The % above condition % \eqref{eq:trirred} 
condition $x^2 + y^2 + z^2 - x y z \neq 4$ 
%%%
also means that the matrix group
$\langle\xi,\eta\rangle$ acts {\em irreducibly\/} on $\C^2$. That is,
$\langle\xi,\eta\rangle$ preserves no proper nonzero linear subspace
of $\C^2$.  The condition that $\xi,\eta$ generate an irreducible
representation is crucial in several alternate descriptions of
$\SLtC$-representations of $\F_2$.  In particular, it is equivalent to
the condition that the $\PGLtC$-orbit is closed in $\Hom(\F_2,\SLtC)$.
This condition is in turn equivalent to the orbit being 
{\em stable\/} in the sense of Geometric Invariant Theory.

%%% should we find a better name for this concept here?

A more geometric description involves the action of the subgroup
$\langle \xi,\eta\rangle\subset\SLtC$ on hyperbolic $3$-space
$\Hth$. The group $\PSLtC$ acts by orientation-preserving isometries
of $\Hth$. An {\em involution,\/} that is, an element $g\in \PSLtC$
having order two, is reflection in a unique geodesic
$\Fix(g)\subset\Hth$. Denote the space of such involutions by $\Inv$.

\begin{theoremB}[Coxeter extension]
Suppose that $\xi,\eta\in\SLtC$ generate an irreducible representation
and let $\zeta = \eta^{-1}\xi^{-1}$ so that
\begin{equation*}
\xi \eta \zeta = \Id.
\end{equation*}
Then there exists a unique triple of involutions 
\begin{equation*}
\iota_{\xi\eta},\iota_{\eta\zeta},\iota_{\zeta\xi}\in\Inv 
\end{equation*}
such that the corresponding elements $\P(\xi),\P(\eta),\P(\zeta)\in\PSLtC$
satisfy:
\begin{align*}
\P(\xi)   & = \iota_{\zeta\xi} \iota_{\xi\eta} \\
\P(\eta)  & = \iota_{\xi\eta}\iota_{\eta\zeta} \\
\P(\zeta) & = \iota_{\eta\zeta}\iota_{\zeta\xi}.
\end{align*} %%%%
\end{theoremB}
From Theorem~A follows the identification of the
Fricke space of the three-holed sphere
in terms of trace coordinates as $\big(-\infty,-2]^3$.
The three trace parameters correspond to the three
boundary components of $\Sigma$.
From Theorem~B follows the identification of the
Fricke space of the three-holed sphere with the
space of (mildly degenerate) right-angled hexagons
in the hyperbolic plane $\Ht$. (Right-angled hexagons
are allowed to degenerate when some of the alternate
edges covering boundary components degenerate to ideal points.)

The condition %\eqref{eq:trirred} means that 
$x^2 + y^2 + z^2 - x y z \neq 4$  means that 
$\langle\xi,\eta\rangle$ defines an irreducible representation
on $\C^2$. This is equivalent to the condition that
\begin{equation*}
  \tr [\xi,\eta] \neq 2.
\end{equation*}
Thus the commutator trace plays an important role, partially because
the fundamental group of the one-holed torus admits free generators
$X,Y$ such that the boundary component corresponds to $[X,Y]$.  In
particular trace coordinates identify the Fricke space of the
one-holed torus with
\begin{equation*}
\{ (x,y,z)\in \big(2,\infty) \mid x^2 + y^2 + z^2 - xyz \le 0 \}
\end{equation*}
where the boundary trace equals
\begin{equation*}
\tr [\xi,\eta] = x^2 + y^2 + z^2 - xyz  \le -2.
\end{equation*}
The trace coordinates are related to Fenchel-Nielsen coordinates.
Similar descriptions of the Fricke spaces of the two-holed
cross-surface (projective plane) and the one-holed Klein bottle are
also given.  %%%%%%% include the formulas here!!!!

The character variety of $\F_3$ is more complicated.
Let $X_1,X_2,X_3$ be free generators. The traces of
the words 
\begin{equation*}
X_1, X_2, X_3,
X_1 X_2, X_1X_3, X_2X_3,
X_1 X_2 X_3, X_1X_3 X_2
\end{equation*}
generate the $\SLtC$-character ring of $\F_3$. 
We denote these functions by
\begin{equation*}
x_1, x_2, x_3,
x_{12}, x_{13}, x_{23},
x_{123}, x_{132}
\end{equation*}
respectively.
However, the character ring is
not a polynomial ring on these generators, due to the
trace identities expressing the {\em triple traces\/}
$x_{123}$ and $x_{132}$ as the roots of a monic
quadratic polynomial whose coefficients are polynomials
in the {\em single traces\/} $x_i$ and 
{\em double traces\/} $x_{ij}$:
\begin{align*}
x_{123} +  x_{132} & \;=\; x_{12}x_3 + x_{13}x_2 + x_{23}x_1 - x_1x_2x_3 \\
x_{123}\,\,\,  x_{132} & \;=\;  (x_1^2 + x_2^2 +x_3^2) \,+\,
(x_{12}^2 + x_{23}^2 \,+\, x_{13}^2) \  -  \\  
& \qquad \  ( x_1x_2x_{12} + x_2x_3x_{23} + x_3x_1x_{13} ) \,+\,
x_{12} x_{23} x_{13} \,-\, 4. 
\end{align*}
Furthermore the character variety is a hypersurface in $\C^7$
which is a double branched covering of $\C^6$. 
In particular its coordinate ring, the {\em character ring,\/}
is the quotient 
\begin{equation*}
\RR_3 := \C[x_1, x_2, x_3, x_{12}, x_{13}, x_{23},x_{123}]/\mathfrak{I}
\end{equation*}
by the principal ideal $\mathfrak{I}$ generated by the polynomial
\begin{align*}
\Phi(x_1, x_2, x_3,  & x_{12}, x_{13}, x_{23},x_{123})  \;:=\; \\
& x_1 x_2 x_3 x_{123}   + 
%%% x_1x_2 x_3 + x_{12}x_{13}x_{23}  \\ 
x_{12}x_{13}x_{23}  \\  %%% correction
& \quad - x_1x_2x_{12} - x_1x_3x_{13} - x_2x_3x_{23} \\ &
\qquad - x_1 x_{23} x_{123}
- x_2 x_{13} x_{123}
- x_3 x_{12} x_{123}  \\ & 
\qquad \quad + x_1^2 + x_2^2 + x_3^2 + x_{12}^2 + x_{13}^2 +  x_{23}^2 \\ 
& \qquad \qquad + x_{123}^2 - 4 
\end{align*}

We use this description to discuss the Fricke spaces
of the 4-holed sphere $\Sigma_{0,4}$ and the 2-holed torus
$\Sigma_{1,2}$. In these cases, the generators $X_i$ and their
products correspond to curves on the surface, and we pay special
attention to the elements corresponding to the boundary $\partial \Sigma$.

In particular we describe the homomorphisms on character rings
induced by the orientable double coverings of the 2-holed cross-cap 
$C_{0,2}$ 
\begin{equation*}
\Sigma_{0,4}  \longrightarrow C_{0,2} 
\end{equation*}
and the $1$-holed Klein bottle $C_{1,1}$
\begin{equation*}
\Sigma_{1,2}  \longrightarrow C_{1,1} 
\end{equation*}
respectively.

Finally we end with the important observation (see Vogt~\cite{Vogt})
that the $\SLtC$-character ring $\RR_n$ of a free group $\F_n$ where $n\ge 4$,
is generated by traces of words of length $\le 3$. %%%%%

This chapter began as an effort~\cite{fricke}, to provide a %%%%
self-contained exposition of Theorem~A. Later it grew to include
several results on hyperbolic geometry, which were used, for example
in \cite{puncturedtorus} but with neither adequate proofs nor 
references to the literature. In this version, we have tried to give a
leisurely and elementary description of basic results on moduli of
hyperbolic structures using trace coordinates.

\subsubsection*{Acknowlegements.}
I am grateful to Hyman Bass, Steve Boyer, Richard Brown, Serge Cantat,
Virginie Charette, Daryl Cooper, John Conway, Marc Culler, Todd Drumm,
Art Dupre,
Elisha Falbel, Carlos Florentino, Charlie Frohman, Alexander Gamburd,
Jane Gilman, Ryan Hoban, Misha Kapovich, Linda Keen, Fran\c cois
Labourie, Albert Marden, John Millson, Maryam Mirzakhani, Greg
McShane, Walter Neumann, John Parker, Julien Paupert, Bob Penner,
Peter Sarnak, Caroline Series, Peter Shalen, Adam
Sikora, George Stantchev, Ser-Peow Tan, Domingo Toledo, Richard
Wentworth, Anna Wienhard, Scott Wolpert and Eugene Xia for their
interest in this manuscript, and for providing valuable
suggestions. In particular I wish to thank Sean Lawton,
Elisha Peterson, 
and Ying Zhang
for carefully reading parts of this manuscript and pointing out
several technical mistakes.  I would like to thank Guillaume Th\'eret %%%
for help with the illustrations.  I also wish to express my gratitude
for the hospitality of the Mathematical Sciences Research Institute in
Fall 2007, and the Institute for Advanced Study and Princeton
University in Spring 2008, where this work was completed.

\subsubsection*{Notation and terminology.}
We mainly work over the field $\C$ of complex numbers and its subfield
$\R$ of real numbers.  Denote the ring of rational integers by $\Z$.
We denote projectivization by $\P$, so that if $V$ is a $\C$-vector
space (respectively an $\R$-vector space), then $\P(V)$ denotes the
set of all complex (respectively real) lines in $V$.  Similarly if
$V\xrightarrow{\xi} W$ is a linear transformation between vector
spaces $V,W$, denote the corresponding projective transformation by
$\P(\xi)$, wherever it is defined.  For example the {\em complex
projective line\/} $\CP^1 = \P(\C^2)$.  The noncommutative field of
Hamilton quaternions is denoted $\HH$.  The set of positive real
numbers is denoted $\R_+$.

Denote the algebra of $2\times 2$ matrices over $\C$ by $\Mat$. 

The trace and determinant functions are denoted $\tr$ and $\det$ respectively.
Denote the transpose of a matrix $A$ by $A^{\dag}$.

Let $\kk$ be a field (either $\R$ or $\C$).
Denote the multiplicative group of $\kk$ 
(the group of nonzero elements) by $\kk^*$.

Let $n>0$ be an integer.  The {\em general linear group\/} is denoted
$\GL{n,\kk}$; for example $\GLtC$ is the group of all invertible
$2\times 2$ complex matrices.  We also denote the group of scalar
matrices 
\begin{equation*}
\kk^*\Id\subset \GL{n,\kk} 
\end{equation*}
by $\kk^*$.  The {\em special linear group\/} consists of all matrices in $\GL{n,\kk}$ having
determinant one, and is denoted $\SL{n,\kk}$.  The {\em projective
linear groups\/} $\PGL{n,\kk}$ (and respectively $\PSL{n,\kk}$) are
the quotients of $\GL{n,\kk}$ (respectively $\SL{n,\kk}$) by the
central subgroup $\{\lambda\Id \mid \lambda \in\kk^*\}$ of scalar
matrices, which we also denote $\kk^*$. %%%

If $A,B$ are matrices, then their multiplicative commutator is 
denoted $[A,B] := ABA^{-1}B^{-1}$ and their additive commutator 
(their {\em Lie product\/}) is denoted $\Lie(A,B) := AB - BA$.

\renewcommand{\star}{\ast} %%%

If $A$ is a transformation, denote its set of fixed points
by $\Fix(A)$.
%%% Denote the identity element by $\Id$. 
Denote the relation of conjugacy in a group by $\sim$.
Denote free product of two groups $A,B$ by $A\star B$.
If $a_1,\dots, a_n$ are elements of a group, then 
$\langle a_1,\dots a_n\rangle$ denotes the subgroup generated
by $a_1,\dots, a_n$. The presentation of a group with
generators $g_1,\dots g_m$ and relations $r_1(g_1,\dots g_m), \dots
r_n(g_1,\dots g_m)$ is denoted
\begin{equation*}
\langle  g_1,\dots g_m \mid r_1, \dots r_n \rangle.
\end{equation*}
Denote the free group of rank $n$ by $\F_n$.
Denote the symmetric group on $n$ letters by $\mathfrak{S}_n$.

Denote the (real) hyperbolic $n$-space by $\mathsf{H}^n$.

We briefly summarize the topology of surfaces.

A compact surface with $n$ boundary components will be called 
{\em $n$-holed.\/} If $M$ is a closed surface, then the complement
in $M$ of $n$ open discs will be called an {\em ``$n$-holed $M$.''\/} %%% an
For example a $1$-holed sphere is a disc and a $2$-holed sphere
is an annulus.

We adopt the following notation for topological types of connected compact
surfaces, beginning with orientable surfaces.
$\Sigma_{g,n}$ denotes the $n$-holed (orientable) surface of genus $g$.
Thus $\Sigma_{0,0}$ is a sphere, $\Sigma_{1,0}$ is a torus,
$\Sigma_{0,1}$ is a disc and $\Sigma_{0,2}$ is an annulus.  %%% not disk

The connected sum operation $\#$ satisifies:
\begin{equation*}
\Sigma_{g_1,n_1} \#  \Sigma_{g_2,n_2} \approx 
\Sigma_{g_1+ g_2,n_1+n_2}.
\end{equation*}
Other basic facts about orientable surfaces involve 
the Euler characteristic and the fundamental group:
\begin{equation*}
\chi(\Sigma_{g,n}) = 2 - 2g - n 
\end{equation*}
and if $n>0$, the fundamental group $\pi_1(\Sigma_{g,n})$ is free of rank $2g+ n - 1$.

For non-orientable surfaces, our starting point is the %%%%
topological 
surface $C_{0,0}$ homeomorphic to the real projective plane, 
which J.\ H.\ Conway has proposed calling a {\em cross-surface.\/}
\index{cross-surface}
We denote the $n$-holed $k+1$-fold connected sum of cross-surfaces
by $C_{k,n}$. Thus the M\"obius band is represented by $C_{0,1}$ and the
Klein bottle by 
\begin{equation*}
C_{1,0} \approx C_{0,0}\# C_{0,0} %%% \approx \Sigma_{1,0}\# C_{0,0}.
\end{equation*}
The operation of connected sum satisfies:
\begin{align*}
\Sigma_{g,n_1}\# C_{k,n_2} &\approx C_{2g+k,n_1+n_2} \\ %%%
C_{k_1,n_1}\# C_{k_2,n_2} &\approx C_{k_1+k_2+1,n_1+n_2}.
\end{align*}
The Euler characteristic and the fundamental group satisfy:
\begin{equation*}
\chi(C_{k,n}) = 1 - n - k 
\end{equation*}
and $\pi_1(C_{k,n})$ is free of rank $n+k$ if $n > 0$.

The orientable double covering space of $C_{g,n}$ is $\Sigma_{g,2n}$.
\index{topology of surfaces}
\section{Traces in $\SLtC$}\index{traces}

The purpose of this section is %%%%% to give 
an elementary and relatively
self-contained proof of Theorem~A.
This basic result explicitly describes the $\SLtC$-character variety of 
a rank-two free group as the affine space $\C^3$, parametrized
by the traces of the free generators $X,Y$ and the trace of their
product $XY$.
Apparently due to Vogt~\cite{Vogt}, it is also in the work of 
Fricke~\cite{Fricke} and Fricke-Klein~\cite{FrickeKlein}.

We motivate the discussion by starting with the simpler case of conjugacy 
classes of single elements, that is  
{\em cyclic groups\/}  (free groups of {\em rank one\/}). 
In this case the $\SLtC$-character variety $V_1$ is the {\em affine line\/} $\C^1$,
parametrized by the trace.

\subsection{Cyclic groups}\index{cyclic groups.}
% The analogous result for one generator is the following.
\begin{theorem}
Let $\SLtC\xrightarrow{f}\C$ be a polynomial function invariant under
inner automorphisms of $\SLtC$. Then there exists a polynomial $F(t)\in\C[t]$
such that $f(g) = F(\tr(g))$. Conversely, if $g,g'\in \SLtC$ satisfy
\begin{equation*}
\tr(g) = \tr(g') \neq \pm 2, 
\end{equation*}
then $g' = h g h^{-1}$ for some $h\in \SLtC$.
% then $\exists$ $h\in \SLtC$ such that $g' = h g h^{-1}$.
\end{theorem}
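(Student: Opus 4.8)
The plan is to treat the two assertions separately, in both cases reducing to diagonal matrices. For the first assertion I would begin by restricting the conjugation-invariant function $f$ to the one-parameter subgroup of diagonal matrices $\delta(\lambda) := \mathrm{diag}(\lambda,\lambda^{-1})$, $\lambda\in\C^*$. Since $f$ is a regular function on $\SLtC$ and $\lambda\mapsto\delta(\lambda)$ is a regular map $\C^*\to\SLtC$, the composite $\lambda\mapsto f(\delta(\lambda))$ is a regular function on $\C^*$, that is, a Laurent polynomial $q(\lambda)$. Conjugating $\delta(\lambda)$ by the matrix $w=\begin{pmatrix} 0 & 1 \\ -1 & 0 \end{pmatrix}\in\SLtC$ interchanges the two diagonal entries, so invariance of $f$ forces $q(\lambda)=q(\lambda^{-1})$. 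The key elementary lemma is then that a Laurent polynomial symmetric under $\lambda\mapsto\lambda^{-1}$ is a polynomial in $\lambda+\lambda^{-1}$: grouping terms as $a_n(\lambda^n+\lambda^{-n})$ and using the recursion $p_{n+1}=(\lambda+\lambda^{-1})p_n-p_{n-1}$ shows each $\lambda^n+\lambda^{-n}$ lies in $\C[\lambda+\lambda^{-1}]$. This produces a polynomial $F$ with $f(\delta(\lambda))=F(\lambda+\lambda^{-1})=F(\tr\delta(\lambda))$.

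The remaining step in the first assertion is to promote this identity from diagonal matrices to all of $\SLtC$. Every $g$ with distinct eigenvalues is diagonalizable, hence conjugate to some $\delta(\lambda)$, and by conjugation-invariance of both $f$ and $\tr$ we obtain $f(g)=F(\tr(g))$ for all such $g$. The elements with distinct eigenvalues are exactly those with $\tr(g)\neq\pm 2$, which form a Zariski-dense (indeed Zariski-open and nonempty) subset of the irreducible variety $\SLtC$. Since $f$ and $F\circ\tr$ are regular functions agreeing on a dense subset, they agree identically.

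For the converse, suppose $\tr(g)=\tr(g')=t\neq\pm 2$. The characteristic polynomial of any element of $\SLtC$ is $\mu^2-t\mu+1$, whose roots $\lambda,\lambda^{-1}$ are distinct precisely because $t\neq\pm 2$. Hence $g$ and $g'$ are each diagonalizable with the same pair of eigenvalues, so each is conjugate in $\GLtC$ to $\delta(\lambda)$, and therefore to one another: $g'=hgh^{-1}$ for some $h\in\GLtC$. Finally, since scalar matrices are central, replacing $h$ by $\mu h$ with $\mu^2=(\det h)^{-1}$ leaves the conjugation unchanged while arranging $\det h=1$, so $h$ may be taken in $\SLtC$.

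I expect the only genuinely delicate point to be the passage from the dense set of elements with distinct eigenvalues to all of $\SLtC$ in the first assertion; everything else is a direct computation. This density argument could alternatively be replaced by a continuity argument in the classical topology, since the diagonalizable elements with distinct eigenvalues are dense there as well, and $f$ and $F\circ\tr$ are continuous.
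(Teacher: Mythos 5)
Your proposal is correct, but it follows a genuinely different route from the paper's proof. The paper produces $F$ with no symmetric-function argument at all: it uses the companion-matrix section $t\mapsto \xi_t = \left(\begin{smallmatrix} t & -1 \\ 1 & 0\end{smallmatrix}\right)$ of the trace map and simply \emph{defines} $F(t) := f(\xi_t)$, which is manifestly polynomial in $t$; the work is then in showing every $g$ is absorbed into this slice --- by conjugacy when $\tr(g)\neq\pm 2$, and at the critical traces $t=\pm 2$ by a Jordan-form case analysis plus the observation that $\pm\Id$ lies in the closure of the non-identity ($\pm$)unipotent orbit, so $f(g)=f(\xi_t)$ follows by continuity of the regular function $f$. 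You instead restrict to the diagonal torus, invoke Weyl symmetry under $\lambda\mapsto\lambda^{-1}$, prove the symmetric-Laurent-polynomial lemma to manufacture $F$, and then extend by Zariski density of $\{\tr\neq\pm 2\}$ in the irreducible variety $\SLtC$. What your approach buys is that the density step handles $t=\pm 2$ automatically, with no case analysis of unipotents or orbit closures; what it costs is the extra lemma --- though amusingly the paper proves essentially that same lemma anyway (its Lemma~\ref{lem:even} on $\sigma$-invariant Laurent polynomials) and deploys it, together with a density argument very much like yours, in the rank-two case, so your cyclic-case proof is structurally a miniature of the paper's proof of Theorem~A. Two small notes: your recursion $p_{n+1}=(\lambda+\lambda^{-1})p_n - p_{n-1}$ is a clean, correct proof of the lemma in this one-variable setting; and the $\GLtC$-versus-$\SLtC$ conjugation issue, which you settle at the end by rescaling $h$ by $\mu$ with $\mu^2=(\det h)^{-1}$, is in fact already used implicitly when you declare $g$ ``conjugate to some $\delta(\lambda)$'' in the first part --- harmless, since the same rescaling applies there, but worth stating at first use. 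Your converse argument coincides with the paper's (Jordan form / diagonalization with distinct eigenvalues).
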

\begin{proof}
Suppose $f$ is an invariant function. For $t\in\C$, define
\begin{equation*}
\xi_t := \bmatrix t & -1 \\ 1 & 0 \endbmatrix 
\end{equation*}
and define $F(t)$ by
\begin{equation*}
F(t) = f(\xi_t).
\end{equation*}
Suppose that $t\neq\pm 2$ and $\tr(g) = t$. 
Then %%% both $g$ and $\xi_t$ have distinct eigenvalues
$g$ and $\xi_t$ each have distinct eigenvalues %%%
\begin{equation*}
\lambda_{\pm} = \frac12 \left( t \pm ( t^2 - 4)^{1/2} \right)
\end{equation*}
and $hgh^{-1} = \xi_t$ for some $h\in \SLtC$.
Thus 
\begin{equation*}
f(g) = f(h^{-1}\xi_th) = f(\xi_t) = F(t) 
\end{equation*}
as desired. If $t=\pm 2$, then by taking Jordan normal
form, either $g=\pm\Id$ or $g$ is conjugate to $\xi_t$. In the latter case,
$f(g) = F(t)$ follows from invariance. Otherwise $g$ lies in the closure
of the $\SLtC$-orbit of $\xi_t$ and $f(g) = f(\xi_t) = F(t)$ follows by
continuity of $f$.

The converse direction follows from Jordan normal form as already used above.
\end{proof}
The map
\begin{equation*}
\SLtC \xrightarrow{\tr} \C 
\end{equation*}
is a {\em categorical\/} quotient map in the sense of algebraic geometry,
although it fails to be a quotient map in the usual sense. 
The %%%%% problem 
discrepancy
%%%%%
occurs at the critical level sets $\tr^{-1}(\pm 2)$.
The critical values of $\tr$ are $\pm 2$, and the restriction of
$\tr$ to the regular set
\begin{equation*}
\tr^{-1}( \C \setminus \{\pm 2\}) 
\end{equation*}
is a quotient map (indeed a holomorphic submersion). The critical
level set $\tr^{-1}(2)$ consists of all unipotent matrices, and these
are conjugate to the one-parameter subgroup
\begin{equation*}
\bmatrix 1 & t \\ 0 & 1 \endbmatrix 
\end{equation*}
where $t\in\C$. For $t\neq 0$, these matrices comprise a single orbit. 
This orbit does {\em not\/} contain the identity matrix $\Id$ (where $t = 0$),
although its closure does.
Any regular function cannot separate a non-identity unipotent
matrix from $\Id$. Thus $\tr^{-1}(2)$ contains two orbits: the non-identity
unipotent matrices, and the identity matrix $\Id$. Similar remarks apply to
the other critical level set  $\tr^{-1}(-2) = -\tr^{-1}(2)$.

For example, 
\begin{align*}
\SLtC & \longrightarrow \C \\
\xi & \longmapsto \tr(\xi^2)
\end{align*}
is an invariant function and can be expressed in terms of $\tr(\xi)$ by:
\begin{equation}\label{eq:traceofsquare}
\tr(\xi^2) = \tr(\xi)^2 - 2
\end{equation}
%%%% as will be proved in the next section.
which follows from the Cayley-Hamilon theorem 
(see \eqref{eq:CH} below) by taking traces.

\subsection{Two-generator groups.}
We begin by recording the first (trivial)
normalization for computing traces:
\begin{equation}\label{eq:trid}
\tr(\Id) = 2.
\end{equation}
This will be the first of three properties of the trace
function which enables the computation of traces of
arbitrary words in elements of $\SLtC$.

\subsubsection*{The Cayley-Hamilton theorem.}
\index{Cayley-Hamilton theorem}

If $\xi$ is a $2\times 2$-matrix,
\begin{equation}\label{eq:CH}
\xi^2 - \tr(\xi) \xi + \det(\xi)\Id = 0. 
\end{equation}
Suppose $\xi,\eta\in\SLtC$.
Multiplying \eqref{eq:CH} by $\xi^{-1}$ and rearranging,
\begin{equation}\label{eq:SumInvo}
\xi + \xi^{-1} = \tr(\xi) \Id 
\end{equation}
from which follows (using \eqref{eq:trid}):
\begin{equation}\label{eq:inverse}
\tr(\xi) =  \tr(\xi^{-1}).
\end{equation}
Multiplying \eqref{eq:SumInvo} by $\eta$ and taking traces, we obtain
(switching $\xi$ and $\eta$):
\begin{theorem}[The Basic Identity]
Let $\xi,\eta\in\SLtC$. Then
\begin{equation}\label{eq:basic}
\tr(\xi\eta) + \tr(\xi\eta^{-1})  
= \tr(\xi)\tr(\eta).
\end{equation}
\end{theorem}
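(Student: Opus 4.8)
The plan is to derive the identity directly from \eqref{eq:SumInvo}, the consequence of Cayley--Hamilton already in hand, which has done essentially all the work. First I would apply \eqref{eq:SumInvo} with the roles of $\xi$ and $\eta$ interchanged, so that
\begin{equation*}
\eta + \eta^{-1} = \tr(\eta)\,\Id.
\end{equation*}
This is legitimate because \eqref{eq:SumInvo} holds for every element of $\SLtC$, in particular for $\eta$.

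The key algebraic step is then to multiply this matrix identity on the left by $\xi$. Since $\tr(\eta)$ is a scalar, it passes unchanged through the product on the right-hand side, giving
\begin{equation*}
\xi\eta + \xi\eta^{-1} = \tr(\eta)\,\xi.
\end{equation*}
Taking traces of both sides, and using linearity of $\tr$ on the left together with the fact that a scalar factors out of the trace on the right, yields $\tr(\xi\eta) + \tr(\xi\eta^{-1}) = \tr(\eta)\,\tr(\xi)$, which is exactly \eqref{eq:basic} after commuting the two numerical factors on the right.

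There is no genuine obstacle here; every ingredient is already established, and the proof is a one-line computation. The only point deserving care is the standing hypothesis $\xi,\eta\in\SLtC$, which forces $\det(\eta)=1$ and so puts \eqref{eq:SumInvo} in the clean form used above. For a general $2\times 2$ matrix the determinant term survives, and the same manipulation produces instead $\tr(\xi\eta) + \det(\eta)\,\tr(\xi\eta^{-1}) = \tr(\eta)\,\tr(\xi)$; restricting to unimodular matrices removes this factor and the Basic Identity follows immediately.
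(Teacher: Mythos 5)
Your proof is correct and is essentially the paper's own argument: the paper likewise multiplies the Cayley--Hamilton consequence \eqref{eq:SumInvo} by the other generator and takes traces (merely phrasing it as multiplying $\xi+\xi^{-1}=\tr(\xi)\,\Id$ by $\eta$ and then switching $\xi$ and $\eta$, where you apply the identity to $\eta$ directly). Your closing remark about the general non-unimodular form with the $\det(\eta)$ factor is a nice, accurate aside but plays no role in the proof itself.
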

As we shall see, the three identities
\eqref{eq:trid},\eqref{eq:SumInvo} and %%% \eqref{eq:trid} 
\eqref{eq:inverse}
apply to
compute the trace of any word $w(\xi,\eta)$ for $\xi,\eta\in\SLtC$.

\subsubsection*{Traces of reduced words: an algorithm.}\label{sec:wordtraces}
\index{algorithm for $\SLtC$-traces of reduced words in $\F_2$}
% \index{algorithm|traces of reduced words in $\F_2$}
Here is an important special case of % the Vogt-Fricke theorem.
Theorem~A. 
Namely, let $w(X,Y)\in\pi$
be a reduced word. Then
\begin{align*}
\SLtC \times \SLtC & \longrightarrow \C \\
(\xi,\eta) & \longmapsto \tr\big(w(\xi,\eta)\big)
\end{align*}
is an $\SLtC$-invariant function on $\SLtC\times\SLtC$. %%%%
Theorem~A guarantees
a polynomial
\begin{equation*}
f_w(x,y,z)\in\C[x,y,z] 
\end{equation*}
such that 
\begin{equation}\label{eq:fw}
\tr\big(w(\xi,\eta)\big) \;=\; f_w\big(\tr(\xi),\tr(\eta),\tr(\xi\eta)\big)
\end{equation}
for all $\xi,\eta\in \SLtC$. We describe an algorithm for computing
$f_w(x,y,z)$. For notational convenience we write 
\begin{equation*}
\tr\big(w(\xi,\eta)\big) := f_{w(X,Y)}(x,y,z). 
%\tr\big(w(X,Y)\big) := f_{w(X,Y)}(x,y,z). 
\end{equation*}
For example,
\begin{align*}
\tr(\Id) & = 2, \\
%\tr(X^{-1})  = \tr(X) & = x, \\
\tr(\xi^{-1})  = \tr(\xi) & = x, \\
%\tr(Y^{-1})  = \tr(Y) & = y, \\
\tr(\eta^{-1})  = \tr(\eta) & = y,
\end{align*}
verifying assertion \eqref{eq:fw} for words $w$ of length $\ell(w)\le 1$.
%%%%
For symmetry, we write $Z = Y^{-1} X^{-1}$,
so that $X,Y,Z$ satisfy the relation
\begin{equation*}
X Y Z = \Id.
\end{equation*}
(For a geometric interpretation of this presentation
in terms of the three-holed sphere $\Sigma_{0,3}$,
compare \S\ref{sec:threeholedsphere}.)
Write $\zeta = (\xi\eta)^{-1}$ so that
$\xi\eta \zeta = \Id$. Then 
\begin{align*}
\tr(\xi\eta)   = \tr(\eta\xi)  & =  \tr(\xi^{-1}\eta^{-1})   =  \\
\tr(\eta^{-1}\xi^{-1}) & =  \tr(\zeta) 
  = \tr(\zeta^{-1})  = z,
%\tr(\xi\eta)   = \tr(YX)   =  \tr(X^{-1}Y^{-1}) &  =  \\
%\tr(Y^{-1}X^{-1}) = \tr(Z) = \tr(Z^{-1}) & = z,
% t(Z^{-1})  = t(Z) & =  z.
\end{align*}

The reduced words of length two are 
\begin{align*}
X^2,Y^2,XY, & XY^{-1},YX,YX^{-1}, \\
X^{-2},Y^{-2},X^{-1}Y^{-1}, &X^{-1}Y,Y^{-1}X^{-1},Y^{-1}X.
\end{align*}
As mentioned above, the trace of a square \eqref{eq:traceofsquare} follows immediately
by taking the trace of \eqref{eq:CH}. Thus:
\begin{align*}
\tr(\xi^2) & =  x^2 - 2 \\
\tr(\eta^2) & =  y^2 - 2 \\
\tr\big((\xi\eta)^2\big) & =  z^2 - 2 
% \tr(X^2) & =  x^2 - 2 \\
% \tr(Y^2) & =  y^2 - 2 \\
% \tr\big((XY)^2\big) & =  z^2 - 2 \\
\end{align*}
Further applications of the trace identities imply:
\begin{align*}
\tr(\xi\eta^{-1}) & =  xy - z \\
\tr\big(\eta(\xi\eta)\big) = \tr(\eta\zeta^{-1}) & =  yz - x \\
\tr\big((\xi\eta)^{-1})\xi^{-1}\big) = \tr(\zeta\xi^{-1}) & =  zx - y 
% \tr(XY^{-1}) & =  xy - z \\
% \tr\big(Y(XY)\big) = \tr(YZ^{-1}) & =  yz - x \\
% \tr\big((XY)^{-1})X^{-1}\big) = \tr(ZX^{-1}) & =  zx - y 
\end{align*}
%%% clarify best way to bring in triple symmetry
For example, taking $w(X,Y) = XY^{-1}$,
\begin{equation*}
\tr(\xi\eta^{-1}) = \tr(\xi)\tr(\eta) - \tr(\xi\eta).
\end{equation*}
Furthermore
\begin{align}\label{eq:xyxiy}
%\tr(XYX^{-1}Y) & = \tr(XY) \tr(X^{-1}Y) - \tr(X^2)  \\ 
\tr(\xi\eta\xi^{-1}\eta) & = \tr(\xi\eta) \tr(\xi^{-1}\eta) - \tr(\xi^2)  \\ 
& = z(xy - z) - (x^2 -2) \notag \\ 
&  = 2 - x^2  - z^2  + xyz. \notag
\end{align}
An extremely important example is the {\em commutator word\/}
\begin{equation*}
k(X,Y) :=  X Y X^{-1} Y^{-1}.
\end{equation*}
Computation of its trace polynomial $\kappa = f_k$ 
follows easily from applying \eqref{eq:basic} to \eqref{eq:xyxiy}:
\begin{align*}
\tr(\xi\eta\xi^{-1}\eta^{-1}) & =  
\tr(\xi\eta\xi^{-1}) \tr(\eta) - \tr(\xi\eta\xi^{-1}\eta) \\ & =  
y^2 - (2 - x^2  - z^2  + xyz) \\ & = x^2 + y^2 + z^2 - x y z - 2
\end{align*}
whence
\begin{equation}
\label{eq:commutator}
\kappa(x,y,z) = f_k(x,y,z) =
x^2 + y^2 + z^2 - x y z - 2.
\end{equation}
\index{commutator trace}

Assume inductively that for all reduced words $w(X,Y)\in\pi$ with
$\ell(w)<m$, 
there exists a polynomial $f_w(x,y,z) = \tr(w(\xi,\eta))$ satisfying \eqref{eq:fw}.
Suppose that $u(X,Y)\in\F_2$ is a reduced word of length $\ell(u) = m$.

The explicit calculations above begin the induction for $m\le 2$. 
Thus we assume $m>2$.

Furthermore, we can assume that $u$ is {\em cyclically reduced,\/} that is
the initial symbol of $u$ is not inverse to the terminal symbol of $u$.
For otherwise 
\begin{equation*}
u(X,Y) = S u'(X,Y) S^{-1}, 
\end{equation*}
where $S$ is one of the four symbols 
\begin{equation*}
X, Y,X^{-1},Y^{-1} 
\end{equation*}
and $\ell(u') = m-2$. Then $u(X,Y)$ and $u'(X,Y)$ are conjugate
and 
\begin{equation*}
\tr(u(X,Y)) = \tr(u'(X,Y)). 
\end{equation*}

If $m>2$, and $u$ is cyclically reduced,
then $u(X,Y)$ has a repeated letter,
which we may assume to
equal $X$. That is, we may write, after conjugating
by a subword,
\begin{equation*}
u(X,Y) = u_1(X,Y) u_2(X,Y) 
\end{equation*}
where $u_1$ and $u_2$ are reduced words each ending in $X^{\pm 1}$. 
Furthermore we may assume that 
\begin{equation*}
\ell(u_1) + \ell(u_2) = \ell(u) = m, 
\end{equation*}
% correction 9/24/03 sean lawton
so that $\ell(u_1) < m$ and $\ell(u_2) < m$.
Suppose first that $u_1$ and $u_2$ both end in $X$. Then 
\begin{equation*}
u(X,Y) = \left( u_1(X,Y)X^{-1}\right)X\ 
\left( u_2(X,Y)X^{-1}\right)X 
\end{equation*}
and each of %%%
\begin{equation*}
u_1(X,Y)X^{-1},\;  u_2(X,Y)X^{-1} 
\end{equation*}
has a terminal $XX^{-1}$, 
which we cancel to obtain corresponding reduced
words $u_1'(X,Y), u_2'(X,Y)$ respectively
with 
\begin{equation*}
\ell(u_i'), \ell(u_i) 
\end{equation*}
for $i=1,2$ and
\begin{equation*}
u(X,Y) = u_1(X,Y) u_2(X,Y) =  u_1'(X,Y) X u_2'(X,Y) X 
\end{equation*}
in $\F_2$.
Then 
\begin{equation*}
(u_1(X,Y)X^{-1})(u_2(X,Y)X^{-1})^{-1} = 
u_1'(X,Y)u_2'(X,Y)^{-1}
\end{equation*}
is represented by a reduced word $u_3(X,Y)$ 
satisfying $\ell(u_3) < m$.
%%% maybe this can be clarified: t_i = t(u_i(X,Y)) ???
By the induction hypothesis, there exist polynomials
\begin{equation*}
f_{u_1(X,Y)},\, f_{u_2(X,Y)},\, f_{u_3(X,Y)} \;\in\; \C[x,y,z]
\end{equation*}
such that, for all $\xi,\eta\in \SLtC$, $i=1,2,3$,
\begin{equation*}
\tr\big(u_i(\xi,\eta)\big) = 
f_{u_i(X,Y)}\big(\tr(\xi),\tr(\eta),\tr(\xi\eta)\big).
\end{equation*}
% correction sean
By \eqref{eq:basic}, 
\begin{equation*}
f_u = f_{u_1} f_{u_2} - f_{u_3}
\end{equation*}
% correction sean
is a polynomial in $\C[x,y,z]$.
The cases when $u_1$ and $u_2$ both end in the symbols $X^{-1}, Y, Y^{-1}$ 
are completely analogous.
Since there are only four symbols, the only cyclically reduced words without
repeated symbols are commutators of the symbols, for example 
$XYX^{-1}Y^{-1}$. Repeated applications of the trace identities 
evaluate this trace polynomial as $\kappa(x,y,z)$ 
defined in \eqref{eq:commutator}.
The other commutators of distinct symbols also have trace $\kappa(x,y,z)$
by identical arguments.

% hus every $w(X,Y)$ determines a polynomial $f_w(x,y,z)$ such that
% \begin{equation*}
% \tr\big( w(\xi,\eta)\big) \; =\; f_w(\tr(\xi),\tr (\eta),\tr (\xi\eta)) 
% \end{equation*}
% for $\xi,\eta\in\SLtC$.

\subsubsection*{Surjectivity of characters of pairs: a normal form.}
\index{surjectivity of character map for $\F_2$}
We first show that
\begin{align*}
\tau: \SLtC\times\SLtC & \longrightarrow \C^3 \\
(\xi,\eta) & 
\longmapsto \bmatrix \tr(\xi) \\  \tr(\eta) \\ \tr(\xi\eta)\endbmatrix 
\end{align*}
is surjective.
Let $(x,y,z)\in\C^3$.
Choose $\zz\in\C$ so that
\begin{equation*}
\zz + \zz^{-1} = z, 
\end{equation*}
that is, $\zz = \frac12 (z \pm \sqrt{z^2 - 4})$.
Let
\begin{equation}\label{eq:explicitrep}
\xi_x = \bmatrix x &  -1  \\ 1 & 0 \endbmatrix,\;
\eta_{(y,\zz)} = \bmatrix 0 &  \zz^{-1} \\ -\zz & y \endbmatrix.
\end{equation}
Then $\tau(\xi_x,\eta_{(y,\zz)}) = (x,y,z)$.

Next we show that every $\SLtC$-invariant regular function
\begin{equation*}
\SLtC\times\SLtC\xrightarrow{f}\C 
\end{equation*}
factors through $\tau$.
To this end we need the following elementary lemma on symmetric functions:

\begin{lemma}\label{lem:even}
Let $R$ be an integral domain where $2$ is invertible,
and let $R' = R[\zz,\zz^{-1}]$ be the ring of Laurent
polynomials over $R$. Let $R'\xrightarrow{\sigma} R'$ be the involution
which fixes $R$  %%%%
and interchanges $\zz$ and $\zz^{-1}$. Then the subring 
of $\sigma$-invariants is the polynomial ring $R[\zz+\zz^{-1}]$.
\end{lemma}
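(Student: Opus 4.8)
The plan is to show that the subring of $\sigma$-invariants in $R' = R[\zz,\zz^{-1}]$ coincides with the polynomial subring $R[w]$, where I abbreviate $w := \zz + \zz^{-1}$. One inclusion is immediate: $w$ is manifestly $\sigma$-invariant (since $\sigma$ swaps $\zz$ and $\zz^{-1}$), and $\sigma$ fixes $R$ pointwise, so every element of $R[w]$ is $\sigma$-invariant. The substance is the reverse inclusion, namely that every $\sigma$-invariant Laurent polynomial already lies in $R[w]$.

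For the reverse inclusion, first I would argue that $R[w]$ is genuinely a \emph{polynomial} ring, i.e.\ that $w$ is transcendental over $R$. This follows because $R'$ is a free $R$-module on the basis $\{\zz^n \mid n\in\Z\}$, and a nontrivial polynomial relation $\sum_k a_k w^k = 0$ would, upon expanding $w^k = (\zz+\zz^{-1})^k$ and collecting the top-degree term, force the leading coefficient to vanish (using that $R$ is an integral domain). Having established this, the claim $R[w]^{\sigma} \supseteq \{\text{invariants}\}$ reduces to showing each invariant is a polynomial in $w$ with coefficients in $R$.

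The key step is to represent an arbitrary $\sigma$-invariant element in a normal form adapted to $\sigma$. Given $p = \sum_{n\in\Z} a_n \zz^n \in R'$, invariance $\sigma(p) = p$ says $a_{-n} = a_n$ for all $n$, so I may write $p = a_0 + \sum_{n\ge 1} a_n(\zz^n + \zz^{-n})$. The heart of the matter is then the identity that each symmetric power sum $s_n := \zz^n + \zz^{-n}$ is a polynomial in $w = s_1$ with integer coefficients; concretely this is the Chebyshev-type recursion
\begin{equation*}
s_{n+1} = w\, s_n - s_{n-1}, \qquad s_0 = 2,\; s_1 = w,
\end{equation*}
which one checks by multiplying $\zz^n + \zz^{-n}$ by $\zz + \zz^{-1}$. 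An easy induction on $n$ then shows $s_n \in \Z[w] \subseteq R[w]$ (this is exactly where ``$2$ invertible'' is not even needed, though it is harmless; it is the integral-domain hypothesis that secures the polynomial-ring structure above). Substituting these expressions back, $p = a_0 + \sum_{n\ge 1} a_n s_n(w)$ exhibits $p$ as an element of $R[w]$, completing the proof.

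I expect the only genuine obstacle to be bookkeeping: one must confirm that the recursion for $s_n$ terminates appropriately and that the finitely many nonzero $a_n$ yield a genuine (finite) polynomial in $w$, but since $p$ is a Laurent polynomial only finitely many $a_n$ are nonzero, so this is automatic. A cleaner alternative to the explicit recursion would be to induct on the \emph{width} (the largest $n$ with $a_n \ne 0$): the top term $a_N(\zz^N + \zz^{-N})$ can be matched by $a_N w^N$ up to lower-width symmetric terms, so subtracting $a_N w^N$ strictly decreases the width while preserving $\sigma$-invariance, and one descends to the base case $p \in R$. Either route is routine; the Chebyshev recursion is the more transparent to record.
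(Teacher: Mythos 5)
Your proof is correct, and it takes a genuinely different route from the paper's. The paper lifts the invariant Laurent polynomial to the polynomial ring $R[x,y]$ (with $R' = R[x,y]/(xy-1)$), splits off the antisymmetric part by writing $\tilde f = f - \frac12 g(x,y)(xy-1)$ --- this is exactly where the hypothesis that $2$ is invertible enters --- and then invokes the theorem on elementary symmetric functions to get $\tilde f(x,y) = h(x+y,xy)$, whence $F(\zz,\zz^{-1}) = h(\zz+\zz^{-1},1)$. You instead work directly in the Laurent basis: invariance forces $a_{-n} = a_n$, so the element is an $R$-combination of the power sums $s_n = \zz^n + \zz^{-n}$, and the Chebyshev recursion $s_{n+1} = w s_n - s_{n-1}$ with $s_0 = 2$, $s_1 = w$ puts each $s_n$ in $\Z[w]$. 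Your approach buys two things the paper's does not: it shows the invertibility of $2$ is superfluous (your argument works over any commutative ring, in particular in characteristic $2$, whereas the paper's symmetrization genuinely needs $\frac12$), and you explicitly verify that $w$ is transcendental over $R$, justifying the word ``polynomial ring'' in the statement, which the paper leaves implicit. One tiny remark: even your transcendence step does not need the integral-domain hypothesis, since the coefficient of $\zz^K$ in $\sum_{k \le K} a_k w^k$ is exactly $a_K$ (the lower $w^k$ contribute only exponents of absolute value $< K$), so it vanishes outright rather than via a zero-divisor argument; the paper's stated hypotheses are thus tailored to its own proof, not to the truth of the lemma.
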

\begin{proof}
Let $F(\zz,\zz^{-1})\in R[\zz,\zz^{-1}]$ be a $\sigma$-invariant
Laurent polynomial. 
Begin by rewriting $R'$ as the quotient of the polynomial ring
$R[x,y]$ by the ideal generated by $xy-1$. Then $\sigma$ is induced by
the involution $\tilde\sigma$ of $R[x,y]$ interchanging $x$ and $y$.
Let $f(x,y)\in R[x,y]$ be a polynomial whose image in $R'$ is $F$.
Then there exists a polynomial $g(x,y)$ such that 
\begin{equation*}
f(x,y) - f(y,x) = g(x,y) (xy - 1). 
\end{equation*}
Clearly $g(x,y) = - g(y,x)$. 
% There exists a polynomial
%$h(x,y)\in R[x,y]$ which is symmetric in $x$ and $y$ such that
%\begin{equation*}
%g(x,y) = (x - y) h(x,y). 
%\end{equation*}
Let 
\begin{equation*}
\tilde f(x,y) = f(x,y) - \frac12\; g(x,y) (x y - 1)
\end{equation*}
so that $\tilde f(x,y)  = \tilde f(y,x)$. By the theorem on elementary
symmetric functions,
%%%                           give reference
\begin{equation*}
\tilde f(x,y) = h(x + y, xy) 
\end{equation*}
for some polynomial $h(u,v)$.
Therefore $F(\zz,\zz^{-1}) = h(\zz + \zz^{-1},1)$ as desired.
\end{proof}
\noindent
By definition $f(\xi,\eta)$ is a polynomial in the matrix entries of
$\xi$ and $\eta$; regard two polynomials differing by elements in the
ideal generated by $\det(\xi)-1$ and $\det(\eta)-1$ as equal. 
Thus $f(\xi_x,\eta_{(y,\zz)})$ equals a function $g(x,y,\zz)$
which is a polynomial in $x,y\in\C$ and a Laurent polynomial in
$\zz\in\C^*$,   %%%
"where $\xi_x$ and $\eta_{(y,\zz)}$ were defined 
in \eqref{eq:explicitrep}.
%%%

\begin{lemma}\label{lem:commonperp}
Let $\xi,\eta\in \SLtC$ such that $\kappa(\tau(\xi,\eta))\neq 2$. 
Then there exists
$h\in \SLtC$ such that 
\begin{equation*}
h\cdot (\xi,\eta) =  (\xi^{-1},\eta^{-1}).
\end{equation*}
\end{lemma}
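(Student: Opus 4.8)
The plan is to realize the desired $h$ as a half-turn about the common perpendicular of the invariant axes of $\xi$ and $\eta$, which I implement algebraically through the adjoint action on trace-free matrices. Write $\xi = \frac12\tr(\xi)\,\Id + \xi_0$ with trace-free part $\xi_0 := \xi - \frac12\tr(\xi)\,\Id$, and similarly $\eta_0 := \eta - \frac12\tr(\eta)\,\Id$. Since \eqref{eq:SumInvo} gives $\xi^{-1} = \tr(\xi)\,\Id - \xi$, we have $\xi^{-1} = \frac12\tr(\xi)\,\Id - \xi_0$; that is, inversion merely negates the trace-free part. Consequently $h\xi h^{-1} = \xi^{-1}$ is equivalent to $\mathrm{Ad}(h)\xi_0 = -\xi_0$, and likewise for $\eta$. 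Thus it suffices to produce a single $h\in\SLtC$ whose adjoint action negates both $\xi_0$ and $\eta_0$.

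I equip $\mathfrak{sl}(2,\C)\cong\C^3$ with the nondegenerate symmetric form $\langle A,B\rangle := \tr(AB)$, under which $\mathrm{Ad}$ maps $\SLtC$ to the complex orthogonal group of this form. I look for $h$ with $\tr(h)=0$: by \eqref{eq:CH} such an $h$ satisfies $h^2 = -\Id$, its eigenvalues $\pm i$ are distinct, and $\mathrm{Ad}(h)$ therefore has order two, fixing the line $\C h$ and acting as $-\mathrm{id}$ on the orthogonal plane $h^\perp$. Hence I require $h$ to be trace-free, to satisfy $\det(h)=1$ (equivalently $\langle h,h\rangle = -2$), and to be orthogonal to both $\xi_0$ and $\eta_0$; then $\xi_0,\eta_0\in h^\perp$ are automatically sent to $-\xi_0,-\eta_0$ by $\mathrm{Ad}(h)$, giving $h\cdot(\xi,\eta)=(\xi^{-1},\eta^{-1})$.

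The existence of such $h$ rests on one computation, which is also where the hypothesis enters. Using $\tr(\xi_0^2) = \frac12\tr(\xi)^2 - 2$, $\tr(\xi_0\eta_0) = \tr(\xi\eta) - \frac12\tr(\xi)\tr(\eta)$, and \eqref{eq:commutator}, I evaluate the Gram determinant of $\xi_0,\eta_0$:
\begin{equation*}
\langle\xi_0,\xi_0\rangle\,\langle\eta_0,\eta_0\rangle - \langle\xi_0,\eta_0\rangle^2 \;=\; 2 - \kappa\big(\tau(\xi,\eta)\big).
\end{equation*}
By hypothesis $\kappa(\tau(\xi,\eta))\neq 2$, so this is nonzero. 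In particular $\xi_0,\eta_0$ are linearly independent (which already rules out the reducible degenerations, including $\xi=\pm\Id$ or $\eta=\pm\Id$), and the form restricts nondegenerately to their span. Hence the orthogonal complement $\{\xi_0,\eta_0\}^\perp$ is a line on which the form is nondegenerate; rescaling a spanning vector $w$ so that $\langle w,w\rangle = -2$ (possible over $\C$ since $\langle w,w\rangle\neq 0$) yields the required $h := w$.

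The one point demanding care — and the reason $\kappa\neq 2$ is exactly the correct hypothesis — is the nondegeneracy of the form on $\{\xi_0,\eta_0\}^\perp$: were that line isotropic, no trace-free $h$ with $\det(h)=1$ could lie along it, and by the displayed identity this degeneracy happens precisely when $\kappa=2$, i.e.\ when $\langle\xi,\eta\rangle$ is reducible. Geometrically, $h$ is the half-turn about the common perpendicular to the axes of $\xi$ and $\eta$ in $\Hth$, and the isotropic case corresponds exactly to those axes sharing an endpoint at infinity. Once $h$ is constructed, the verifications $\mathrm{Ad}(h)\xi_0=-\xi_0$ and $\mathrm{Ad}(h)\eta_0=-\eta_0$ are immediate from $\xi_0,\eta_0\in h^\perp$, completing the proof.
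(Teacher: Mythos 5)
Your proof is correct, and it takes a structural route where the paper's is explicit. The paper produces $h$ directly as the normalized Lie product: setting $L = \Lie(\xi,\eta) = \xi\eta - \eta\xi$, it computes $\tr(L) = 0$ and $\det(L) = \tr[\xi,\eta] - 2 = \kappa - 2 \neq 0$, rescales to $h = \mu L \in \SLtC$, checks $\tr(h\xi) = \tr(h\eta) = 0$ by cyclic invariance of the trace, and concludes from $h^2 = (h\xi)^2 = (h\eta)^2 = -\Id$ that $h$ conjugates $\xi,\eta$ to their inverses. You instead prove existence abstractly: inversion negates the traceless part, so it suffices to find a traceless unimodular $h$ orthogonal to $\xi_0,\eta_0$ for the trace form, and you get one from the orthogonal decomposition of $\sltC$, the hypothesis entering as the Gram identity $\tr(\xi_0^2)\tr(\eta_0^2) - \tr(\xi_0\eta_0)^2 = 2 - \kappa \neq 0$. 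I verified your three trace evaluations and this identity; they are correct, as is the eigenspace step (the $+1$-eigenspace of $\Ad(h)$ is the centralizer $\C h$ because $h$ has distinct eigenvalues $\pm i$, so $\Ad(h) = -\mathrm{id}$ on $h^\perp$). The two arguments are two faces of one computation: $L$ is traceless and orthogonal to $\xi_0$ and $\eta_0$ (this is exactly the paper's checks $\tr(L\xi) = \tr(L\eta) = 0$, restated), so your line $\{\xi_0,\eta_0\}^\perp$ is spanned by $L$, your $h$ coincides with the paper's up to the same scalar normalization, and $\langle L,L\rangle = -2\det(L) = 2(2-\kappa)$ is twice your Gram determinant. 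What each buys: the paper's explicit formula is what gets reused later --- Proposition~\ref{prop:Lie} interprets $\Lie(\xi,\eta)$ as the involution in the common perpendicular, which is precisely your geometric gloss, and your ``orthogonal to $\Id,\xi,\eta$, hence proportional to the Lie product'' observation reappears almost verbatim in the proof of Proposition~\ref{prop:onto} --- whereas your version makes transparent why $\kappa \neq 2$ is exactly the right hypothesis (nondegeneracy of the restricted form, i.e.\ irreducibility, cf.\ Proposition~\ref{prop:irreducibility}) and replaces the trace verifications by the eigenspace decomposition of $\Ad(h)$, at the cost of producing $h$ nonconstructively unless one adds the one-line remark that $L$ spans the complement line.
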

\begin{proof}
Let $(x,y,z) = \tau(\xi,\eta)$. By  %a simple calculation %%% {eq:twokindsofcommutator}
%%%%
the commutator trace formula
\eqref{eq:commutator}, 
%%%%
\begin{equation*}
\tr [\xi,\eta] = \kappa(x,y,z) 
\end{equation*}
where $[\xi,\eta] = \xi\eta\xi^{-1}\eta^{-1}$.

Let $L = \xi\eta-\eta\xi$. 
(Compare \S 4 of J\o rgensen~\cite{J} or Fenchel~\cite{Fenchel}.)
Then 
\begin{equation*}
\tr(L) = \tr(\xi\eta) -\tr(\eta\xi) = 0.  
\end{equation*}
Furthermore
for any $2\times 2$ matrix $M$, the characteristic polynomial
\begin{equation*}
\lambda_M(t) := \det(t\Id - M) = t^2 - \tr(M) t + \det(M).
\end{equation*}
Thus
\begin{align*}
\det(L) & = \det( [\xi,\eta] -\Id)\det(\eta\xi) \\ & = \det([\xi,\eta]-\Id) \\ 
& = - \lambda_{[\xi,\eta]}(1) \\ 
& = - 2 + \tr [\xi,\eta] \\ 
& = - 2 + \kappa(x,y,z) \neq 0.
\end{align*}
Choose $\mu\in\C^*$ such that $\mu^2 \det(L) = 1$ and let $h = \mu L\in \SLtC$.

Since $\tr(h) = 0$ and $\det(h) =1$, the Cayley-Hamilton Theorem  
\begin{equation*}
\lambda_M(M) = 0  
\end{equation*}
implies that $h^2 = -\Id$. Similarly
\begin{align*}
\det(h\xi) &= \det(h) = 1 \\ 
\det(h\eta) &= \det(h) = 1, 
\end{align*}
and 
\begin{align*}
\tr(h\xi) & = \mu (\tr((\xi\eta)\xi) - \tr((\eta\xi)\xi)) \\
& = \mu (\tr(\xi(\eta\xi)) - \tr((\eta\xi)\xi)) = 0 
\end{align*}
and 
\begin{align*}
\tr(h\eta) & = \mu (\tr((\xi\eta)\eta) - \tr((\eta\xi)\eta)) \\ 
& = \mu (\tr((\xi\eta)\eta) - \tr(\eta(\xi\eta)) = 0
\end{align*}
so $(h\xi)^2 = (h\eta)^2 = -\Id$. Thus
\begin{equation*}
h \xi h^{-1} \xi = - h\xi h\xi = \Id 
\end{equation*}
whence $h \xi h^{-1} = \xi^{-1}.$ 
Similarly $h \eta h^{-1} = \eta^{-1}$, 
concluding the proof of the lemma. \end{proof}
\index{common orthogonal geodesic}

Apply Lemma~\ref{lem:commonperp} 
to $\xi = \xi_x$ and $\eta = \eta_{(y,\zz)}$ as above to obtain
$h$ such that conjugation by $h$ maps
\begin{equation*}
\xi  \longmapsto
\xi^{-1} =  \bmatrix 0 &  1  \\ -1 & x \endbmatrix
\end{equation*}
and 
\begin{equation*}
\eta  \longmapsto
\eta^{-1} =  \bmatrix y &  - 1/\zz  \\ \zz & 0 \endbmatrix.
\end{equation*}
If 
\begin{equation*}
u = \bmatrix 0 & 1 \\ 1 & 0 \endbmatrix,
\end{equation*}
then 
\begin{equation*}
uh \xi (uh)^{-1} = u \xi^{-1} u^{-1} =  
\bmatrix x &  -1  \\ 1 & 0 \endbmatrix = \xi
\end{equation*}
and 
\begin{equation*}
uh \eta (uh)^{-1} = 
u \eta^{-1} u^{-1} =  \bmatrix 0 &  \zz  \\ -1/\zz & y \endbmatrix.
\end{equation*}
Thus
\begin{align*}
g(x,y,\zz) & =  f(\xi,\eta) \\ & 
= f(uh \xi (uh)^{-1},uh \eta (uh)^{-1}) \\ 
& =  g(x,y,\zz^{-1}).
\end{align*}
Lemma~\ref{lem:even} 
implies that 
\begin{equation}\label{eq:even}
g(x,y,\zz) = F(x,y,\zz + 1/\zz)
\end{equation}
for some polynomial $F(x,y,z)\in\C[x,y,z]$, 
whenever $\kappa(x,y,\zz+1/\zz)\neq 2$.
Since this condition defines a nonempty Zariski-dense open set,
\eqref{eq:even} holds on all of $\C^2\times\C^*$ and
\begin{equation*}
f(\xi,\eta) = F(\tr(\xi),\tr(\eta), \tr(\xi\eta)) 
\end{equation*}
as claimed.

\subsubsection*{Injectivity of $\SLtC$-characters of pairs.}
\index{injectivity of character map for $\F_2$}

Finally we show that if $(\xi,\eta),(\xi',\eta')\in H$ 
% satisfy \eqref{eq:equalchars} %%%%
satisfy
\begin{equation}\label{eq:equalchars}
\bmatrix \tr(\xi) \\ \tr(\eta) \\ \tr(\xi\eta) \endbmatrix = 
\bmatrix \tr(\xi') \\ \tr(\eta') \\ \tr(\xi'\eta') \endbmatrix = 
\bmatrix x \\ y \\ z \endbmatrix,  
\end{equation}
%%%%
and $\kappa(x,y,z)\neq 2$, then $(\xi,\eta)$ and $(\xi',\eta')$ 
are $\SLtC$-equivalent. 
By \S\ref{sec:wordtraces}, the triple 
\begin{equation*}
\bmatrix x \\ y \\ z\endbmatrix  = 
\bmatrix \tr(\xi) \\ \tr(\eta) \\ \tr (\xi\eta)\endbmatrix 
\end{equation*}
determines the character function
\begin{align*}
%%% \chi:
\pi &\longrightarrow \C  \\
w(X,Y) & \longmapsto \tr\big( w(\xi,\eta)\big) = f_w(x,y,z).
\end{align*}
Let $\rho$ and $\rho'$ denote the representations $\pi\longrightarrow \SLtC$
taking $X,Y$  to $\xi,\eta$ and $\xi',\eta'$ respectively
and let $\chi,\chi'$ denote their respective characters. 
Then our hypothesis 
\eqref{eq:equalchars} implies that $\chi=\chi'$.

\subsection{Injectivity of the character map: the general case.}

The conjugacy of representations (one of which is irreducible) 
having the same character 
follows from a general argument using the Burnside theorem.
I am grateful to Hyman Bass~\cite{Bass} for explaining this to me.

Suppose $\rho$ and $\rho'$ are irreducible representations on $\C^2$.
Burnside's Theorem (see Lang~\cite{Lang}, p.445) implies 
the corresponding representations 
(also denoted $\rho, \rho'$ respectively)
of the group algebra $\C\pi$ into $\Mat$ are surjective. 
Since the trace form
\begin{align*}
\Mat \times  \Mat &\longrightarrow \C \\
(A,B) & \longmapsto \tr(AB)
\end{align*}
is nondegenerate, the kernel $K$ of $\C\pi\xrightarrow{\rho} \Mat$
consists of all 
\begin{equation*}
\sum_{\alpha\in\pi} a_\alpha \alpha \in \C\pi
\end{equation*}
such that 
\begin{align*}
0 & = \tr \Bigg( \Big(\sum_{\alpha\in\pi} a_\alpha \rho(\alpha)\Big) 
\rho(\beta) \Bigg) \\ 
& = \sum_{\alpha\in\pi} a_\alpha\, \tr\big(\rho(\alpha\beta)\big) \\
& = \sum_{\alpha\in\pi} a_\alpha\, \chi(\alpha\beta)
\end{align*}
for all $\beta\in\pi$.
Thus the kernels of both representations of $\C\pi$ are equal, and
$\rho$ and $\rho'$ respectively induce algebra isomorphisms
\begin{equation*}
\C\pi/K \longrightarrow \Mat,
\end{equation*}
denoted $\tilde\rho, \tilde\rho'$.

The composition $\tilde\rho'\circ\tilde\rho^{-1}$ is an
automorphism of the algebra $\Mat$, which must be induced by
conjugation by $g\in\GLtC$. 
(See, for example, Corollary 9.122, p.734
of Rotman~\cite{Rotman}.) %%%% or Artin~\cite{Artin}.) 
In particular 
$\rho'(\gamma) = g \rho(\gamma)g^{-1}$
as desired. 

\subsubsection*{Irreducibility.}
\index{irreducible representations}
The theory is significantly different for reducible representations.
Representations 
\begin{equation*}
\rho_1,\rho_2\in\Hom(\pi,\SLtC) 
\end{equation*}
are {\em equivalent\/} 
$\Longleftrightarrow$ they define the same point in the character variety,
that is, for all regular functions $f$ in the character ring, 
\begin{equation*}
f(\rho_1) = f(\rho_2). 
\end{equation*}
If both are irreducible, then $\rho_1$ and $\rho_2$ are conjugate.
Closely related is the fact that the conjugacy class of an irreducible
representation is closed.
Here are several equivalent conditions for irreducibility of two-generator 
subgroups of $\SLtC$:

\begin{proposition}\label{prop:irreducibility}
Let $\xi,\eta\in\SLtC$. The following are equivalent:
\begin{enumerate}
\item $\xi,\eta$ generate an irreducible representation on $\C^2$;\label{item:irr}
\item $\tr(\xi\eta\xi^{-1}\eta^{-1}) \neq 2$;\label{item:trcomm}
\item $\det(\xi\eta-\eta\xi) \neq 0$;\label{item:detLie}
\item The pair $(\xi,\eta)$ is not $\SLtC$-conjugate to a representation by
upper-triangular matrices
\begin{equation*}
\bmatrix a & b \\ 0 & a^{-1} \endbmatrix, 
\end{equation*}
where $a\in\C^*, b\in\C$;\label{item:uppertri}
\item Either the group $\langle \xi,\eta\rangle$ is not solvable, or 
there exists a decomposition 
\begin{equation*}
\C^2 = L_1 \oplus L_2 
\end{equation*}
into an {\em invariant pair\/} of lines $L_i$ such that one of $\xi,\eta$ interchanges
$L_1$ and $L_2$;\label{item:solv}
\item\label{item:algebrabasis}
$\{\Id, \xi, \eta, \xi\eta\}$ is a basis for $\Mat$.
\end{enumerate}
\end{proposition}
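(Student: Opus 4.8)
The plan is to prove the equivalence of the six conditions by establishing a cycle of implications, rather than proving each pair separately. The natural strategy is to treat condition \eqref{item:irr} (irreducibility) as the hub and show \eqref{item:irr} $\Leftrightarrow$ \eqref{item:uppertri} $\Leftrightarrow$ \eqref{item:algebrabasis}, then independently tie the three trace/determinant conditions \eqref{item:trcomm}, \eqref{item:detLie} to this cluster, and finally address the somewhat special structural condition \eqref{item:solv}. The equivalence \eqref{item:trcomm} $\Leftrightarrow$ \eqref{item:detLie} is already essentially done in the paper: in the proof of Lemma \ref{lem:commonperp} it is computed that $\det(\xi\eta - \eta\xi) = \tr[\xi,\eta] - 2 = \kappa(x,y,z) - 2$, so $\det(\xi\eta-\eta\xi)\neq 0$ exactly when $\tr[\xi,\eta]\neq 2$. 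I would cite that computation directly.

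First I would handle \eqref{item:uppertri} $\Leftrightarrow$ \eqref{item:irr}: a representation is reducible precisely when $\xi$ and $\eta$ share a common eigenvector, i.e. a common invariant line, and after conjugating that line to the span of $e_1$ both matrices become upper-triangular with equal diagonal entries $a, a^{-1}$ (since the shared invariant line forces a common eigenvector). The negations match, giving the equivalence. Next, for \eqref{item:algebrabasis} $\Leftrightarrow$ \eqref{item:irr}, the cleanest route is linear algebra on $\Mat$, which is four-dimensional: if $\{\Id,\xi,\eta,\xi\eta\}$ fails to be a basis it is linearly dependent, and I would show a nontrivial relation $a\Id + b\xi + c\eta + d\xi\eta = 0$ forces the existence of a common invariant line (hence reducibility); conversely if the pair is upper-triangular, all four matrices lie in the three-dimensional space of upper-triangular matrices, so they cannot be independent. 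This two-sided argument is the cleanest way to pin down \eqref{item:algebrabasis}.

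To connect the trace conditions to irreducibility, I would use \eqref{eq:xyxiy} and \eqref{eq:commutator} together with the determinant computation above. The key algebraic observation is that reducibility is equivalent to $\det(\xi\eta-\eta\xi)=0$: if the representation is reducible we may take $\xi,\eta$ upper-triangular, whence $\xi\eta-\eta\xi$ is strictly upper-triangular (nilpotent) and has zero determinant; conversely, a nonzero $L=\xi\eta-\eta\xi$ with $\det L = 0$ and $\tr L = 0$ is nilpotent, and its kernel gives a line invariant under both generators (this is the content of the $L$-construction in Lemma \ref{lem:commonperp}, which produces the common perpendicular/conjugating involution precisely when $L$ is invertible). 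This closes the loop \eqref{item:irr} $\Leftrightarrow$ \eqref{item:detLie} $\Leftrightarrow$ \eqref{item:trcomm}.

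The main obstacle, and the step requiring genuine case analysis, is \eqref{item:solv}. The other conditions are negations of a single clean object (a common invariant line), but \eqref{item:solv} splits reducibility-failure into two structurally different cases. I expect the argument to run as follows: if $\langle\xi,\eta\rangle$ is irreducible, I must show it is either non-solvable or admits an invariant pair of lines swapped by a generator; the subtle point is the solvable-but-irreducible case, where the group preserves an unordered pair of lines $\{L_1,L_2\}$ without preserving either individually (a dihedral-type configuration), forcing one generator to interchange them. Conversely, I must verify that both alternatives in \eqref{item:solv} genuinely imply irreducibility: the non-solvable case because solvability is a necessary consequence of reducibility (a reducible pair is conjugate into the upper-triangular group, which is solvable), and the line-swapping case because a generator interchanging $L_1$ and $L_2$ cannot fix any single line, ruling out a common invariant line. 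Disentangling the solvable irreducible representations—recognizing them as exactly those preserving an unordered pair of lines—is where the real work lies, and I would organize it by analyzing when an irreducible pair has abelian or metabelian image.
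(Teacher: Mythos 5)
Your plan mostly parallels the paper's proof --- \eqref{item:irr} $\Leftrightarrow$ \eqref{item:uppertri} from the definition of reducibility, \eqref{item:trcomm} $\Leftrightarrow$ \eqref{item:detLie} from $\det\Lie(\xi,\eta)=\tr[\xi,\eta]-2$, and \eqref{item:uppertri} $\Leftrightarrow$ \eqref{item:solv} via the solvable/dihedral classification --- but one step is genuinely wrong. You justify the implication from the line-swapping alternative of \eqref{item:solv} to irreducibility by asserting that a generator interchanging $L_1$ and $L_2$ ``cannot fix any single line.'' It can: in a basis adapted to $\C^2=L_1\oplus L_2$, a swapping element of $\SLtC$ is antidiagonal, hence has trace $0$ and eigenvalues $\pm i$, so it is semisimple and fixes exactly two lines (its eigenlines), both distinct from $L_1,L_2$. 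What is true is only that it fixes neither $L_1$ nor $L_2$; to rule out a common invariant line you must invoke the \emph{second} generator: in the dihedral normal form, where $\eta$ is diagonal with entries $\lambda,\lambda^{-1}$ and $\lambda^2\neq 1$, the only $\eta$-invariant lines are $L_1,L_2$, which $\xi$ swaps, while the two $\xi$-eigenlines are not $\eta$-invariant. The failure is not cosmetic: take $\xi$ antidiagonal and $\eta=\Id$ (or $\eta=\pm\xi$); then the unordered pair $\{L_1,L_2\}$ is invariant and $\xi$ interchanges it, yet $\langle\xi,\eta\rangle=\langle\xi\rangle$ is reducible. So a one-generator fixed-line argument cannot possibly succeed, and a correct treatment of \eqref{item:solv}$\Rightarrow$\eqref{item:irr} must, as the paper's does, pass through the classification of solvable subgroups with these degenerate pairs excluded.

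Where you diverge from the paper you are on solid ground, with one misattribution to repair. The paper proves \eqref{item:irr} $\Leftrightarrow$ \eqref{item:trcomm} following Culler--Shalen: if $\kappa=2$, the span of $\Id,\xi,\eta,\xi\eta$ is a proper subalgebra of $\Mat$ (the coordinate map has determinant $2-\kappa$), and Burnside's theorem then gives reducibility. Your route through $L=\Lie(\xi,\eta)$ --- reducible pairs make $L$ strictly triangular, hence $\det L=0$; conversely $\det L=0$ together with $\tr L=0$ makes $L$ nilpotent and $\ker L$ a common invariant line --- is more elementary and avoids Burnside for this equivalence, but it is \emph{not} ``the content of Lemma~\ref{lem:commonperp}'': that lemma treats only the case of invertible $L$ and says nothing about the singular case. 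You need a small additional computation: the Cayley--Hamilton identity \eqref{eq:CH} yields $\xi L+L\xi=\tr(\xi)\,L$, whence $L\xi v=\tr(\xi)Lv-\xi Lv=0$ for $v\in\ker L$, and similarly for $\eta$; the case $L=0$ (commuting generators, which over $\C$ always share an eigenvector) must be noted separately. Likewise your direct linear-algebra plan for \eqref{item:algebrabasis} works and differs from the paper's appeal to Burnside, but the key claim --- that a dependence relation forces a common invariant line --- deserves proof: a relation $\xi\eta=a\Id+b\xi+c\eta$ factors as $(\xi-c\Id)(\eta-b\Id)=(a+bc)\Id$, and either invertibility places $\eta$ in the span of $\Id,\xi$ (giving a common eigenvector), or $a+bc=0$ and the rank-one kernel of $\xi-c\Id$ is invariant under both generators.
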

\noindent
In the next section we will find a further condition
(Theorem~\ref{thm:factor})
involving extending the representation to a representation
of the free product $\Z/2\star\Z/2\star\Z/2$.
\begin{proof}
The equivalence \eqref{item:irr} $\Longleftrightarrow$ \eqref{item:trcomm}
is due to 
Culler-Shalen~\cite{CullerShalen}. 
For completeness we give the proof here.

To prove \eqref{item:trcomm}$\Longrightarrow$\eqref{item:irr}, 
suppose that $\rho$ is reducible.
If $\xi,\eta$ generate a representation
with an invariant subspace of $\C^2$ of dimension one, this representation
is conjugate to one in which $\xi$ and $\eta$ are 
upper-triangular. 
Denoting their diagonal entries by $a,a^{-1}$ and $b,b^{-1}$ respectively,
the diagonal entries of $\xi\eta$ are 
$ab,a^{-1}b^{-1}$.
Thus
\begin{align*}
x & = a + a^{-1}, \\ y & = b  + b^{-1}, \\
z & = a b + a^{-1} b^{-1}.
\end{align*}
By direct computation, $\kappa(x,y,z) = 2$. 

To prove \eqref{item:irr}$\Longrightarrow$\eqref{item:trcomm}, 
suppose that $\kappa(x,y,z) = 2$. 
Let $\mathfrak{A}\subset \Mat$ denote the linear span of
$\Id,\xi,\eta,\xi\eta$. 
Identities derived from the Cayley-Halmilton theorem
\eqref{eq:CH} such as \eqref{eq:SumInvo} imply that
$\mathfrak{A}$ is a subalgebra of $\Mat$. 
For example, $\xi^2$ equals the linear combination 
\begin{equation}\label{eq:square}
\xi^2 = -\Id + x \xi 
\end{equation}
and
\begin{equation}\label{eq:commutation}
\eta\xi = (z - xy) \Id +  y \xi + x \eta - \xi\eta.
\end{equation}
The latter identity follows by writing
\begin{equation*}
\xi^{-1} \eta + \eta^{-1}\xi = \tr(\xi^{-1}\eta)\Id = 
(x y - z)\Id
\end{equation*}
and summing
\begin{align*}
x\eta & = \xi\eta + \xi^{-1}\eta \\
y\xi & = \eta\xi + \eta^{-1}\xi 
\end{align*}
to obtain:
\begin{equation*}
\xi \eta + \eta \xi = (z - x y) \Id + x\eta + y\xi
\end{equation*}
as desired.

In the basis of $\Mat$ by elementary matrices, the map
\begin{align*}
\C^4 & \longrightarrow \Mat \\
\bmatrix x_1 \\ x_2 \\ x_3 \\ x_4 \endbmatrix & \longmapsto
x_1 \Id + x_2 \xi + x_3 \eta + x_4 \xi\eta
\end{align*}
has determinant $2-\kappa(x,y,z) = 0$ and is not surjective. 
Thus $\mathfrak{A}$ is a proper subalgebra of $\Mat$ and 
the representation is reducible, as desired.

\eqref{item:trcomm} $\Longleftrightarrow$ 
\eqref{item:detLie} follows from the suggestive
formula, valid for $\xi,\eta\in\SLtC$,
\begin{equation}\label{eq:twokindsofcommutator}
\tr(\xi\eta\xi^{-1}\eta^{-1}) + \det(\xi\eta-\eta\xi) = 2,
\end{equation}
whose proof is left as an exercise.

The equivalence \eqref{item:irr}$\Longleftrightarrow$ \eqref{item:uppertri}
is essentially the definition of reducibility. If $L\subset\C^2$ is an invariant
subspace, then conjugating by a linear automorphism which maps $L$  to the
first coordinate line $\C\times\{0\}$ makes the representation upper triangular.

\eqref{item:uppertri}$\Longleftrightarrow$ \eqref{item:solv}
follows from the classification of solvable subgroups
of $\SLtC$: a solvable subgroup is either conjugate to a group of 
upper-triangular matrices, 
or is conjugate to a {\em dihedral representation,\/} 
where one of $\xi,\eta$ is a diagonal matrix
and the other is the {\em involution\/}
\begin{equation*}
i \bmatrix 0 & 1 \\ 1 & 0 \endbmatrix
\end{equation*}
(where the coefficient $i$ is required for unimodularity).
A dihedral representation is one which interchanges an invariant pair  %%%
of lines although the lines themselves are not invariant.
For a descripton of these representations in terms of hyperbolic geometry,
see \S\ref{sec:dihedral}.

\eqref{item:irr}$\Longleftrightarrow$ \eqref{item:algebrabasis}
follows from the Burnside lemma, and identities such as
\eqref{eq:square} and \eqref{eq:commutation} to express %%%% reduce 
products %%%%
of $\Id,\xi,\eta,\xi\eta$ with the generators $\xi,\eta$ as linear
combinations of $\Id,\xi,\eta,\xi\eta$.
\end{proof}
\index{irreducible representations}

\section{Coxeter triangle groups in hyperbolic $3$-space} \label{sec:Coxeter}
\index{Coxeter triangle group}

An alternate geometric approach to the algebraic parametrization using
traces involves right-angled hexagons in $\Hth$. 
Specifically, a marked 2-generator group corresponds to an ordered triple
of lines in $\Ht$, no two of which are asymptotic. This triple completes
to a right-angled hexagon by including the three common orthogonal lines.
We use this geometric construction to identify, in terms of traces,
which representations correspond to geometric structures on surfaces.
However, since the trace is only defined on $\SLtC$, and not on $\PSLtC$,
we must first discuss the conditions which ensure that a representation
into $\PSLtC$ lifts to $\SLtC$.

\subsection{Lifting representations to $\SLtC$.}
The group of orientation-preserving 
isometries of $\Hth$ identifies with $\PSLtC$, which is doubly covered
by $\SLtC$. In general, a representation $\Gamma\longrightarrow\PSLtC$ may or may not 
lift to a representation to $\SLtC$. Clearly if $\Gamma$ is a free group, every
representation lifts, since lifting each generator suffices to define a lifted
representation. In general the obstruction to lifting a representation
$\Gamma\longrightarrow\PSLtC$ is a cohomology class $\mathfrak{o}\in H^2(\Gamma,\Z/2)$.
Furthermore there exists a central $\Z/2$-extension  $\hat\Gamma \longrightarrow \Gamma$ 
(corresponding to $\mathfrak{o}$) and a lifted representation $\hat\Gamma$ such that
\begin{equation*}
\begin{CD} 
\hat\Gamma @>>>\SLtC  \\
@VVV @VVV \\
\Gamma  @>>> \PSLtC
\end{CD} 
\end{equation*}
commutes. This lift is {\em not unique;\/} the various lifts differ by multiplication by
homomorphisms 
\begin{equation*}
\Gamma\longrightarrow \{\pm \Id \} = \mathsf{center}\big(\SLtC\big)
\end{equation*}
which comprise the group
\begin{equation*}
\Hom\big(\pi_1(\Sigma),\{\pm \Id\} \big) \cong H^1(\Sigma;\Z/2).
\end{equation*}

The cohomology class in $H^2(\Gamma,\Z/2)$ may be understood in terms
of {\em Hopf's formula\/} for the second homology of a group. 
(See, for example,  Brown~\cite{Brown}.) Consider a  %%% period inserted
presentation $\Gamma = F/R$ where $F$ is a finitely generated free
group and $R\lhd F$ %%%% or $R\triangleleft F$ 
is a normal subgroup. A set
$\{f_1,\dots,f_N\}$ of free generators for $F$ corresponds to the
generators of $\Gamma$ and $R$ corresponds to the {\em relations\/}
among these generators. Then Hopf's formula identifies $H_2(\Gamma)$
with the quotient group 
\begin{equation*}
\big([F,F]\cap R\big)/ [F,R],  
\end{equation*}
where $[F,F]\lhd F$ is the commutator subgroup and $[F,R]$ is the (normal)
subgroup of $F$ generated by commutators $[f,r]$ where $f\in F$ and
$r\in R$. Intuitively, $H_2(\Gamma)$ is generated by relations which
are products of simple commutators $[a_1,b_1]\dots[a_g,b_g]$, where
$a_i,b_i\in F$ are words in $f_1,\dots,f_N$.  Such {\em commutator
relations\/} correspond to maps of a closed orientable surface
$\Sigma_g$ into the classifying space $B\Gamma$ of $\Gamma$. If
$\Gamma\xrightarrow{\rho} G$ is a homomorphism into $G$ and $\tG
\longrightarrow G$ is a central extension (such as a covering group of
a Lie group), then the obstruction is calculated for each commutator
relation 
\begin{equation*}
w = [a_1,b_1]\dots[a_g,b_g]\in [F,F]\cap R  
\end{equation*}
corresponding to a $2$-cycle $z$, as follows.  (Here each $a_i,b_i\in
F$ is a word in the free generators $f_1,\dots, f_N$.)  Lift each %%%%%
generator $\rho(f_i)$ to $\trho(f_i)\in\tG$ and evaluate the word
$w(f_1,\dots,f_N)$ on the lifts $\trho(f_i)$ to obtain an element in
the kernel $K$ of $\tG \longrightarrow G$ (since $w\in R$).
Furthermore since $w\in [F,F]$ and two lifts differ by an element of
$K\subset \mathsf{center}(\tG)$, this element is independent of the
chosen lift $\trho$. This procedure defines an element of
\begin{equation*}
H^2(\Gamma,K) \;\cong\; \Hom\bigg(\frac{[F,F]\cap R}{[F,R]},K \bigg) 
\end{equation*}
which evidently vanishes if and only if $\rho$ lifts.
(Compare Milnor~\cite{Milnor}).   %%% parenthesis inserted
For more discussion of lifting homomorphisms to $\SLtC$, 
compare Culler~\cite{Culler}, Kra~\cite{Kra}, Goldman~\cite{Topcomps}
or Patterson~\cite{Patterson}. According to Patterson~\cite{Patterson},
the first result of this type, due to H.\ Petersson~\cite{Petersson},
is that a Fuchsian subgroup of $\PSLtR$ lifts to $\SLtR$ if and only
if it has no elements of order two.) %%%
\index{lifting representations to $\SLtC$}

A representation $\Gamma\longrightarrow\PSLtC$ is {\em irreducible\/} if one (and hence every) lift
$\hat\Gamma\longrightarrow\SLtC$ is irreducible.

\subsection{The $3$-holed sphere.}\label{sec:threeholedsphere}
The basic building block for hyperbolic surfaces is the 
three-holed sphere $\Sigma_{0,3}$.

%%%%%%%%
%%% The trace coordinates\index{trace coordinates}\index{coordinates!trace}
%%%%%%%%

\subsubsection*{Geometric version of Theorem~A.}
\index{character variety of $\F_2$, geometric version}
%%%%%%%% \index{character variety of $\F_2$, geometric version}

Theorem~\ref{thm:vogt} has a suggestive interpretation in terms of the
three-holed sphere $\Sigma_{0,3}$, or ``pair-of-pants.'' % put picture here?
Namely, the fundamental group
\begin{equation*}
\pi_1(\Sigma_{0,3})\cong \F_2 
\end{equation*}
admits the {\em redundant geometric presentation\/} 
\begin{equation*}
\pi = \pi_1(\Sigma_{0,3})
= \langle X,Y,Z \mid  X Y Z = 1\rangle,
\end{equation*}
where $X,Y,Z$ correspond to the three components of $\partial\Sigma_{0,3}$.
Denoting the corresponding trace functions by lower case, for example
\begin{align*}
\Hom(\pi,G)  &\xrightarrow{x} \C\\
\rho &\longmapsto \tr\big(\rho(X)\big),
\end{align*}
Theorem~A asserts that the $\SLtC$-character ring of $\pi$
is the polynomial ring $\C[x,y,z]$. 

\begin{theorem}% [Geometric version of the Vogt-Fricke Theorem]
The equivalence class of a flat $\SLtC$-bundle over $\Sigma_{0,3}$ with 
irreducible holonomy is 
determined by the equivalence classes of its restrictions to the three
components of $\partial\Sigma_{0,3}$. Furthermore any triple of isomorphism 
classes of flat $\SLtC$-bundles over 
$\partial\Sigma_{0,3}$ 
whose holonomy traces satisfy
\begin{equation*}
x^2 + y^2 + z^2 - x y z \neq 4 
\end{equation*}
extends to a flat $\SLtC$-bundle over
$\Sigma_{0,3}$.
\end{theorem}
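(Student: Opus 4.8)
The plan is to read the statement entirely through the holonomy dictionary, reducing it to Theorem~\ref{thm:vogt}. A flat $\SLtC$-bundle over $\Sigma_{0,3}$ is, up to equivalence, a conjugacy class of homomorphisms $\rho\colon\pi_1(\Sigma_{0,3})\to\SLtC$; using the redundant geometric presentation $\pi=\langle X,Y,Z\mid XYZ=1\rangle\cong\F_2$, restriction of the bundle to the three boundary circles records the conjugacy classes of $\rho(X),\rho(Y),\rho(Z)$. Writing $\xi=\rho(X)$, $\eta=\rho(Y)$, and using $Z=(XY)^{-1}$ together with $\tr(g)=\tr(g^{-1})$ from~\eqref{eq:inverse}, these restrictions are detected by the traces
\begin{equation*}
x=\tr(\xi),\quad y=\tr(\eta),\quad z=\tr\big(\rho(Z)\big)=\tr(\xi\eta).
\end{equation*}
Finally, by Proposition~\ref{prop:irreducibility} and the commutator formula~\eqref{eq:commutator}, the holonomy $\langle\xi,\eta\rangle$ is irreducible precisely when $\tr[\xi,\eta]=x^2+y^2+z^2-xyz-2\neq2$, i.e.\ when $x^2+y^2+z^2-xyz\neq4$.

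First I would dispatch the uniqueness (determination) assertion. If two flat bundles with irreducible holonomy $\rho,\rho'$ have equivalent restrictions to each boundary component, then $\rho(X)\sim\rho'(X)$, $\rho(Y)\sim\rho'(Y)$, $\rho(Z)\sim\rho'(Z)$, so in particular they have equal boundary traces and hence the same trace coordinates $(x,y,z)$. Since both holonomies are irreducible, the injectivity clause of Theorem~\ref{thm:vogt} applies and produces $g\in\SLtC$ with $\rho'=g\rho g^{-1}$; thus the two flat bundles are equivalent. For the extension assertion, start from a triple of boundary bundles whose traces $(x,y,z)$ satisfy $x^2+y^2+z^2-xyz\neq4$. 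The surjectivity clause of Theorem~\ref{thm:vogt} furnishes $(\xi,\eta)\in\SLtC\times\SLtC$ with $\tr(\xi)=x$, $\tr(\eta)=y$, $\tr(\xi\eta)=z$; setting $\rho(X)=\xi$, $\rho(Y)=\eta$, $\rho(Z)=(\xi\eta)^{-1}$ defines a flat bundle over $\Sigma_{0,3}$ whose boundary traces are exactly $x,y,z$, and whose holonomy is irreducible by the displayed trace condition.

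The step I expect to require the most care is identifying ``equivalence class of the restriction to a boundary circle'' with the single number $\tr\rho(X)$ (and similarly for $Y,Z$). A priori the equivalence class of a flat bundle over $S^1$ is the full $\SLtC$-conjugacy class of its holonomy, and by the cyclic-group theorem this class is determined by its trace only away from the critical values $\tr=\pm2$; at those values the level set $\tr^{-1}(\pm2)$ splits into two orbits, so conjugacy class and trace differ. I would resolve this exactly as in the discussion of the categorical quotient $\SLtC\xrightarrow{\tr}\C$ preceding this theorem: in the category-theoretic (closed-orbit) sense relevant to moduli of flat bundles, the equivalence class of a boundary bundle \emph{is} its trace, so the two non-closed orbits over $\pm2$ are identified and the dictionary above is exact. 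With this interpretation in hand, both halves of the theorem are literal translations of the injectivity and surjectivity statements of Theorem~\ref{thm:vogt}, and no further computation is needed.
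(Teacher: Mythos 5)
Your proof is correct and takes essentially the same route as the paper, which presents this theorem as the direct translation of Theorem~\ref{thm:vogt} through the holonomy dictionary for the redundant presentation $\langle X,Y,Z \mid XYZ=1\rangle$, with uniqueness and extension corresponding to the injectivity and surjectivity clauses respectively. Your explicit resolution of the $\tr=\pm 2$ ambiguity --- reading the equivalence class of a boundary bundle in the categorical-quotient sense, so that it is recorded by its trace --- is exactly the interpretation the paper's preceding discussion of the quotient map $\SLtC\xrightarrow{\tr}\C$ intends, and it is needed, since otherwise the extension clause would fail for a central boundary holonomy of trace $\pm 2$ paired with traces satisfying the displayed condition.
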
 %%%%%

\subsubsection*{The hexagon orbifold.}\label{sec:hex}
\index{hexagon orbifold}
Every irreducible representation $\rho$ corresponds to a geometric object
in $\Hth$, a {\em triple of geodesics.\/} Any two of
these geodesics admits a unique common perpendicular geodesic.
These perpendiculars cut off a hexagon bounded by geodesic segments,
with all six angles right angles. Such a {\em right hexagon\/} in $\Hth$
is an alternate geometric object corresponding to $\rho$.

The surface $\Sigma_{0,3}$ admits an {\em orientation-reversing involution \/} 
\begin{equation*}
\Sigma_{0,3}
\xrightarrow{\iota_\hexagon} 
\Sigma_{0,3}
\end{equation*}
whose restriction to each boundary component is a reflection. The
quotient $\hexagon$ by this involution is a disc, combinatorially
equivalent to a hexagon.  The three boundary components map to three
intervals $\partial_i(\hexagon)$, for $i=1,2,3$, in the boundary
$\partial \hexagon$.  The other three edges in $\partial \hexagon$
correspond to the three arcs comprising the fixed point set
$\Fix(\iota_\hexagon)$. The orbifold structure on $\hexagon$ is
defined by mirrors on these three arcs on $\partial \hexagon$. The
quotient map
\begin{equation*}
\Sigma_{0,3} \xrightarrow{\Pi_\hexagon} \hexagon
\end{equation*}
is an orbifold covering-space, representing
$\Sigma_{0,3}$ as the orientable double covering 
of the orbifold $\hexagon$.

%%% what is a good reference for orbifold theory? Mike Davis?

\begin{figure}
\centerline{\epsfxsize=1.5in %%%% \epsfbox{hexpants.eps}}
\epsfbox{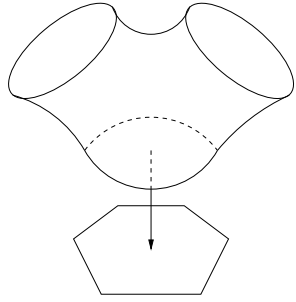}}
\caption{The  three-holed sphere double covers a hexagon orbifold}
\label{fig:hexpants}
\end{figure}
\index{right hexagon orbifold}
The orbifold fundamental group is
\begin{align*}
\hat\pi := \pi_1(\hexagon) &= \langle 
\iota_{YZ}, \iota_{ZX}, \iota_{XY} \mid 
\iota_{YZ}^2 = \iota_{ZX}^2 = \iota_{XY}^2 = 1 \rangle\\
%&\cong\star^3(\Z/2).
&\cong \Z/2 \star \Z/2 \star \Z/2.
\end{align*}
The covering-space  $\Sigma_{0,3} \xrightarrow{\Pi_\hexagon} \hexagon$
induces the embedding of fundamental groups:
\begin{align*}
\pi_1(\Sigma_{0,3}) &\xrightarrow{(\Pi_\hexagon)_*} \pi_1(\hexagon) \\ 
X &\longmapsto  \iota_{ZX}\iota_{XY} \\
Y &\longmapsto  \iota_{XY}\iota_{YZ} \\
Z &\longmapsto  \iota_{YZ}\iota_{ZX}.
\end{align*}

\begin{theorem}\label{thm:factor}
Let $\pi\xrightarrow{\rho}\PGLtC$ be an irreducible representation.
Then there exists a unique representation 
$\hat\pi\xrightarrow{\hat\rho}\PGLtC$ such that
$\rho = \hat\rho \circ (\Pi_\hexagon)_*.$
\end{theorem}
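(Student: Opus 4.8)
The plan is to construct $\hat\rho$ explicitly by specifying where each generator $\iota_{XY},\iota_{YZ},\iota_{ZX}$ goes, verify the relations and the commutativity of the diagram, and then prove uniqueness. The key geometric input is that irreducibility of $\rho$ produces, via the earlier theory, three involutions in $\PGLtC$ attached to the three pairs among $X,Y,Z$. Concretely, I would first lift $\rho$ to a representation into $\GLtC$ (or $\SLtC$); since $\pi\cong\F_2$ is free, such a lift always exists, and irreducibility passes to the lift. Call the lifted images $\xi,\eta$ with $\zeta=\eta^{-1}\xi^{-1}$, so that $\xi\eta\zeta=\Id$. This is exactly the setup of Theorem~B (the Coxeter extension), which, under the irreducibility hypothesis, furnishes a \emph{unique} triple of involutions $\iota_{XY},\iota_{YZ},\iota_{ZX}\in\Inv$ such that $\P(\xi)=\iota_{ZX}\iota_{XY}$, $\P(\eta)=\iota_{XY}\iota_{YZ}$, and $\P(\zeta)=\iota_{YZ}\iota_{ZX}$.

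With these involutions in hand, I would \emph{define} $\hat\rho$ on the free-product generators by
\begin{align*}
\hat\rho(\iota_{XY}) &= \iota_{XY}, &
\hat\rho(\iota_{YZ}) &= \iota_{YZ}, &
\hat\rho(\iota_{ZX}) &= \iota_{ZX}.
\end{align*}
Since $\hat\pi\cong\Z/2\star\Z/2\star\Z/2$ is the free product with the single family of relations $\iota_{XY}^2=\iota_{YZ}^2=\iota_{ZX}^2=1$, and each target element is by construction an involution in $\PGLtC$ (order dividing two), the universal property of the free product guarantees that this assignment extends to a well-defined homomorphism $\hat\pi\xrightarrow{\hat\rho}\PGLtC$. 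Next I would check that the diagram commutes, i.e.\ that $\hat\rho\circ(\Pi_\hexagon)_*=\rho$. Using the explicit formulas for $(\Pi_\hexagon)_*$ recorded just above the statement, this reduces to the three identities
\begin{align*}
\hat\rho\big((\Pi_\hexagon)_*(X)\big) &= \iota_{ZX}\iota_{XY} = \P(\xi) = \rho(X), \\
\hat\rho\big((\Pi_\hexagon)_*(Y)\big) &= \iota_{XY}\iota_{YZ} = \P(\eta) = \rho(Y), \\
\hat\rho\big((\Pi_\hexagon)_*(Z)\big) &= \iota_{YZ}\iota_{ZX} = \P(\zeta) = \rho(Z),
\end{align*}
which are precisely the product relations supplied by Theorem~B.

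For uniqueness I would argue as follows. Suppose $\hat\rho'$ is any representation of $\hat\pi$ with $\hat\rho'\circ(\Pi_\hexagon)_*=\rho$. Then the images $\iota'_{XY}:=\hat\rho'(\iota_{XY})$, etc., are involutions in $\PGLtC$ satisfying the same three product relations $\iota'_{ZX}\iota'_{XY}=\P(\xi)$, $\iota'_{XY}\iota'_{YZ}=\P(\eta)$, $\iota'_{YZ}\iota'_{ZX}=\P(\zeta)$. Since $\hat\pi$ is generated by $\iota_{XY},\iota_{YZ},\iota_{ZX}$, it suffices to show these three involutions are forced. But this is exactly the uniqueness clause already proved in Theorem~B: under the irreducibility hypothesis the triple of involutions realizing the Coxeter factorization is unique, so $\iota'_{XY}=\iota_{XY}$ and likewise for the other two, whence $\hat\rho'=\hat\rho$.

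The main obstacle I anticipate is not in the formal bookkeeping but in making sure the reduction to Theorem~B is legitimate: namely, confirming that the irreducibility hypothesis on the $\PGLtC$-representation $\rho$ transfers correctly to the $\GLtC$ (or $\SLtC$) lift, so that Theorem~B applies verbatim, and that the involutions produced by Theorem~B (which a~priori live over an $\SLtC$-lift) descend unambiguously to $\PGLtC$. One should check that the construction of the triple of involutions is independent of the choice of lift --- different lifts differ by signs, and one must verify these signs do not alter the projective involutions $\iota_{XY},\iota_{YZ},\iota_{ZX}$ nor the identities $\P(\xi)=\iota_{ZX}\iota_{XY}$, etc. Once this lift-independence is settled, both existence and uniqueness follow directly from Theorem~B and the universal property of the free product, with no further computation required.
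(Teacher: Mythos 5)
Your argument is circular. Theorem~B is not an independent prior result that you can invoke here: it is the introduction's announcement of Theorem~\ref{thm:factor} itself, and the paper proves it precisely by proving Theorem~\ref{thm:factor} in \S\ref{sec:Coxeter}. Since every order-two element of $\PGLtC$ is an involution, a representation $\hat\pi = \Z/2\star\Z/2\star\Z/2 \to \PGLtC$ (with generators of order two) is \emph{literally the same data} as a triple of involutions, and the condition $\rho = \hat\rho\circ(\Pi_\hexagon)_*$ is \emph{literally} the three product identities $\P(\xi)=\iota_{ZX}\iota_{XY}$, $\P(\eta)=\iota_{XY}\iota_{YZ}$, $\P(\zeta)=\iota_{YZ}\iota_{ZX}$. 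So your reduction via the universal property of the free product, the $\SLtC$-lift, and the sign-independence check amounts to translating the statement into an equivalent restatement of itself; all the mathematical content --- the existence and uniqueness of the triple of involutions --- is exactly what remains to be proved, and your proposal supplies no argument for it. (A secondary slip: in your uniqueness step you assume the images $\hat\rho'(\iota_{XY})$, etc., are involutions, but a homomorphism from $\hat\pi$ could a priori kill a generator, sending it to $\Id$; this degenerate possibility must be excluded, not assumed away.)

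What is actually needed, and what the paper does: for each pair, say $(X,Y)$, one must \emph{construct} the involution $\iota_{XY}$ inverting both, namely the involution in the common orthogonal $\perp(\ell_X,\ell_Y)$ of the invariant axes --- equivalently, the one represented by the Lie product $\Lie(\xi,\eta)=\xi\eta-\eta\xi$ (Proposition~\ref{prop:Lie}), which is invertible exactly when the representation is irreducible since $\det(\xi\eta-\eta\xi)=2-\tr[\xi,\eta]$ --- with separate definitions of the fixed line when one or both of $X,Y$ are parabolic. Existence of the factorization is then a genuine verification: one shows $X\iota_{XY}$ fixes $\Fix(X)$ and conjugates $Z^{-1}$ to $Z$, so that Lemma~\ref{lem:refl} forces $X\iota_{XY}$ to be an involution whose axis is orthogonal to the axis of $Z$, whence $X\iota_{XY}=\iota_{ZX}$, and similarly for the other two relations. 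Uniqueness also requires an argument: any factorization yields involutions with $\iota_{XY}X\iota_{XY}=X^{-1}$ and $\iota_{XY}Y\iota_{XY}=Y^{-1}$, and if $\iota,\iota'$ both invert $X$ and $Y$ then $\iota\iota'$ centralizes the irreducible group $\langle X,Y\rangle$ and is therefore trivial in $\PGLtC$. To repair your write-up you must supply these constructions and verifications; as it stands you have only proved the (easy) equivalence of Theorem~\ref{thm:factor} with Theorem~B.
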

\noindent
Every element of order two in $\PGLtC$ is reflection about some geodesic.
Therefore a representation $\hat\rho$  corresponds exactly to an ordered
triple of geodesics in $\Hth$. 
%%%%%
Denote this ordered triple of geodesics in $\Hth$ corresponding to $\rho$ %%%%
by $\iota^\rho$.
%%%%%
%%%%% maybe we should find a better name but in general should better integrate this
%%%%% invariant into the discussion
%%%%%

\begin{corollary}\label{cor:factor}
Irreducible representations $\pi\xrightarrow{\rho}\PGLtC$ correspond to
triples $\iota^\rho$ of geodesics in $\Hth$, which share neither a common endpoint  %%% \Hth, was \Ht
nor a common orthogonal geodesic.
\end{corollary}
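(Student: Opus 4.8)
The plan is to combine the bijective correspondence furnished by Theorem~\ref{thm:factor} with a purely geometric translation of irreducibility into the two stated conditions on the triple $\iota^\rho=(a,b,c)$. Write $A,B,C\in\PGLtC$ for the involutions $\hat\rho(\iota_{YZ}),\hat\rho(\iota_{ZX}),\hat\rho(\iota_{XY})$, so that $a=\Fix(A)$, $b=\Fix(B)$, $c=\Fix(C)$ and, by the embedding $(\Pi_\hexagon)_*$,
\[ \rho(X)=BC,\qquad \rho(Y)=CA,\qquad \rho(Z)=AB. \]
Since each involution is reflection in a geodesic, its two fixed points on $\CP^1=\partial\Hth$ are exactly the endpoints of that geodesic, and these correspond to the two eigenlines in $\C^2$ of a trace-zero lift to $\SLtC$. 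Theorem~\ref{thm:factor} already makes irreducible $\rho$ correspond bijectively to such ordered triples, so it remains only to prove that $\rho$ is \emph{reducible} if and only if $a,b,c$ share a common endpoint or a common orthogonal geodesic.

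For the easy implication I would argue directly. A common endpoint $p\in\CP^1$ is a common fixed point of $A,B,C$, hence is fixed by the products $BC$ and $CA$; the line in $\C^2$ it represents is then invariant under the subgroup $\rho(\pi)=\langle BC,CA\rangle$, so $\rho$ is reducible. If instead $a,b,c$ admit a common orthogonal geodesic $\ell$, then each half-turn preserves $\ell$ while reversing it, hence interchanges the two endpoints of $\ell$; consequently the even products $BC$ and $CA$ fix each endpoint of $\ell$, so $\rho(\pi)$ preserves each of the two eigenlines determined by $\partial\ell$ and $\rho$ is again reducible.

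The converse is where the real work lies, and I would organize it around the normal subgroup $\rho(\pi)\lhd\Gamma$ of even words, where $\Gamma:=\langle A,B,C\rangle$. Suppose $\rho$ is reducible, so $\rho(\pi)$ fixes a line $L$. If $\rho(\pi)$ fixes a \emph{unique} line, then normality forces $\Gamma$ to fix $L$ as well (every $\Gamma$-translate of $L$ is again $\rho(\pi)$-invariant), making $L$ a common eigenline of $A,B,C$ and hence $[L]$ a common endpoint of $a,b,c$. Otherwise $\rho(\pi)$ fixes at least two lines $L_1,L_2$, which the odd elements of $\Gamma$ permute; I would show these are exactly two, corresponding to the two endpoints of a geodesic $\ell$, and that $A,B,C$ must all interchange them (the products $BC$ and $CA$ fix both $L_i$, forcing $A,B,C$ to induce the same permutation, which cannot be trivial since that would give $a=b=c$ and a reducible $\Gamma$). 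An orientation-preserving involution interchanging the endpoints of $\ell$ is a half-turn whose axis meets $\ell$ orthogonally, so $\ell$ is a common orthogonal to $a,b,c$.

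The main obstacle is this last, \emph{dihedral}, case: one must dispose of the degenerate possibilities (the $L_i$ coinciding, $\rho(\pi)$ acting by scalars, or an axis accidentally coinciding with $\ell$) and verify the precise geometric fact that an orientation-preserving involution reversing a geodesic $\ell$ is exactly a half-turn about a geodesic perpendicular to $\ell$. Once this is secured, the reducibility dichotomy matches the two excluded configurations exactly, and combining it with the correspondence of Theorem~\ref{thm:factor} yields the asserted bijection between irreducible $\rho$ and triples of geodesics sharing neither a common endpoint nor a common orthogonal.
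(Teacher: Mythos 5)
Your proposal is correct, and its skeleton is the paper's: reduce Corollary~\ref{cor:factor} to the equivalence ``$\rho$ reducible $\Longleftrightarrow$ the triple $(a,b,c)$ has a common endpoint or a common orthogonal,'' and let Theorem~\ref{thm:factor} supply the correspondence itself (uniqueness of $\hat\rho$ guarantees that the triple recovered from the representation built out of a nondegenerate triple is the original one). Where you genuinely diverge is in the engine driving the converse. The paper's route runs through its classification of reducible pairs: Proposition~\ref{prop:irreducibility}, items (4)--(5) (reducible means simultaneously upper-triangularizable; solvable groups are triangular or dihedral), combined with Lemma~\ref{lem:refl}, which is precisely your flagged ``geometric fact'' that an orientation-preserving involution reversing a geodesic $\ell$ is a half-turn about an axis meeting $\ell$ orthogonally --- so that obstacle is already secured by the paper's own lemma. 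You instead argue from normality: $\rho(\pi)$ is the even-word subgroup, normal in $\Gamma=\langle A,B,C\rangle$, so $A,B,C$ permute its invariant lines, and counting those lines (one, exactly two, or all of $\CP^1$) yields respectively a common endpoint, a common orthogonal, or the trivial case $A=B=C$. This buys a self-contained argument independent of the trace identities underlying Proposition~\ref{prop:irreducibility}, at the cost of the degenerate bookkeeping you acknowledge --- which does close: three or more invariant lines force $\rho(\pi)$ to be scalar, whence $BC=CA=\Id$ projectively, $A=B=C$, and $a=b=c$ is a common-endpoint configuration. One small repair in your exactly-two-lines case: the trivial permutation is not ruled out because ``reducible $\Gamma$'' is absurd (you are already assuming reducibility), but because $A,B,C$ all fixing both endpoints forces $a=b=c$ and hence $\rho$ trivial, contradicting that exactly two lines are invariant; and even without excluding it, $a=b=c$ shares an endpoint, so the dichotomy you need survives either way.
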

\noindent
The proofs of Theorem~\ref{thm:factor} and Corollary~\ref{cor:factor}
occupy the remainder of this section.

% \begin{equation*}
% \xymatrix{
% \hat\pi   \ar@{>}[r] & \pi \\ 
% % & \ar@{>}[d]^{\rho} \SLt\\
%   \ar@{-->}[dr]^{\hat\rho} &
% }
% \end{equation*}
% 
%  \begin{equation*}
%  \xymatrix{
% & M \ar@{-->}[d]^{\phi}\\
% \Sigma \ar@{>}[ur]^{f}
% \ar@{>}[r]_{f'} & M' 
% }
% \end{equation*}

\subsubsection*{Involutions in $\PGLtC$.}
\index{involutions}
We are particularly interested in projective transformations of $\CP^1$
of order two, which we call {\em involutions.\/} Such an involution
is given by a matrix $\xi\in\GLtC$ such that $\xi^2$ does
act identically on $\CP^1$ but $\xi$ does {\em not\/} act identically
on $\CP^1$. %%%%%
Thus $\xi$ is a matrix whose square is a scalar matrix but
$\xi$ itself is not scalar. Since $\det(\xi)\neq 0$, replacing $\xi$ 
by 
\begin{equation*}
\det(\xi)^{-1/2} \xi 
\end{equation*}
--- for either %%%%% any 
choice of $\det(\xi)^{-1/2}$ --- 
ensures that $\det(\xi)=1$. 
Then the scalar matrix $\xi^2 = \pm \Id$.
If $\xi^2 = \Id$, then $\det(\xi) = 1$ implies
$\xi = -\Id$, a contradiction. Hence $\xi^2 = -\Id$, 
and $\xi$ must have distinct reciprocal 
eigenvalues $\pm i$.  Thus $\xi$ is conjugate to
\begin{equation*}
\bmatrix i & 0 \\ 0 & -i \endbmatrix.
\end{equation*}
The corresponding projective transformation $\P(\xi)$ has two fixed points.
The orbit of any point not in $\Fix\big(\P(\xi)\big)$  has cardinality two. 

\begin{proposition}\label{prop:matrixinvolution}
Let $\xi\in\Mat$. The following conditions are equivalent:
\begin{itemize}
\item $\P(\xi)\in\Inv$; 
\item $\xi$ is conjugate to $\bmatrix i & 0 \\ 0 & -i \endbmatrix$;
\item $\det(\xi) = 1$ and $\tr(\xi)=0$;
\item $\xi^2 = -\Id$ and $\xi\neq \pm i\Id$;
\item $\xi^2 = -\Id$ and $\xi$ is not a scalar matrix.
\end{itemize}
\end{proposition}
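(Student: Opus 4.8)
The plan is to handle the four \emph{affine} conditions together first --- conjugacy to $\bmatrix i & 0 \\ 0 & -i \endbmatrix$, the pair $\det(\xi)=1,\ \tr(\xi)=0$, the pair $\xi^2=-\Id,\ \xi\neq\pm i\Id$, and the pair $\xi^2=-\Id,\ \xi$ not scalar --- proving them mutually equivalent by elementary manipulation of Cayley--Hamilton, and only then to bridge them to the projective condition $\P(\xi)\in\Inv$. The entire argument runs on one engine: for $\xi\in\Mat$, the identity \eqref{eq:CH} rearranges to $\xi^2=\tr(\xi)\,\xi-\det(\xi)\,\Id$.

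First I would close the loop among the affine conditions. If $\det(\xi)=1$ and $\tr(\xi)=0$, then \eqref{eq:CH} collapses to $\xi^2=-\Id$; moreover a scalar matrix $\lambda\Id$ has trace $2\lambda$, so $\tr(\xi)=0$ with $\det(\xi)=1$ forbids $\xi$ scalar, in particular $\xi\neq\pm i\Id$. Conversely, subtracting $\xi^2=-\Id$ from \eqref{eq:CH} gives $\tr(\xi)\,\xi=(\det(\xi)-1)\,\Id$, so if $\xi$ is not scalar this forces $\tr(\xi)=0$ and then $\det(\xi)=1$. The two forms of the $\xi^2=-\Id$ condition coincide because the only scalar solutions of $\lambda^2=-1$ are $\lambda=\pm i$, so ``$\xi\neq\pm i\Id$'' and ``$\xi$ not scalar'' agree once $\xi^2=-\Id$. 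Finally, $\det(\xi)=1,\ \tr(\xi)=0$ says exactly that the characteristic polynomial is $t^2+1$, whose roots $\pm i$ are distinct, so $\xi$ is diagonalizable with spectrum $\{i,-i\}$, i.e. conjugate to $\bmatrix i & 0 \\ 0 & -i \endbmatrix$; the reverse implication is immediate since trace and determinant are conjugacy invariants.

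Next I would connect these to $\P(\xi)\in\Inv$. From the diagonal form, $\xi^2=-\Id$ gives $\P(\xi)^2=\P(-\Id)$, which is the identity of $\PGLtC$, while non-scalarity gives $\P(\xi)\neq$ identity, so $\P(\xi)$ has order exactly two. Conversely, if $\P(\xi)\in\Inv$ then $\xi$ is invertible (else $\P(\xi)$ is undefined), non-scalar, and $\xi^2$ is a scalar matrix. Here I would invoke the normalization already carried out in the paragraph preceding Proposition~\ref{prop:matrixinvolution}: replacing $\xi$ by $\det(\xi)^{-1/2}\xi$ leaves $\P(\xi)$ unchanged and makes $\det(\xi)=1$, whence $\det(\xi^2)=1$ and $\xi^2=\pm\Id$; the case $\xi^2=\Id$ with $\det(\xi)=1$ would force $\xi=\pm\Id$, contradicting non-scalarity, so $\xi^2=-\Id$ and all the affine conditions hold for this representative.

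The main obstacle is bookkeeping about scale rather than any hard computation. The condition $\P(\xi)\in\Inv$ is invariant under $\xi\mapsto\lambda\xi$, whereas $\det(\xi)=1$ is not, so $\P(\xi)\in\Inv$ cannot literally determine $\det(\xi)$. The honest content of the equivalence is that $\P(\xi)\in\Inv$ holds precisely when $\xi$ is a non-scalar invertible matrix with $\tr(\xi)=0$, and the remaining conditions describe its unique (up to sign) determinant-one rescaling. The step that needs care is therefore to phrase the passage from $\P(\xi)\in\Inv$ to the other conditions as a statement about the normalized representative $\det(\xi)^{-1/2}\xi$, exactly as in the preceding discussion, rather than about an arbitrary lift $\xi$.
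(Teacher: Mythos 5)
Your proposal is correct and takes essentially the same approach as the paper: the paper leaves the formal proof as an exercise, but the paragraph immediately preceding the proposition carries out exactly your normalization argument (replace $\xi$ by $\det(\xi)^{-1/2}\xi$, deduce that $\xi^2=\pm\Id$, rule out $\xi^2=\Id$ via unimodularity and non-scalarity, and conclude that the eigenvalues are $\pm i$), while your Cayley--Hamilton bookkeeping among the affine conditions supplies the intended exercise content. Your remark that $\P(\xi)\in\Inv$ is invariant under $\xi\mapsto\lambda\xi$ whereas $\det(\xi)=1$ is not, so the equivalence must be read on the determinant-one representative, is the correct reading and coincides with the paper's own normalization step.
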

\noindent
The proof is left as an exercise. Denote the collection of such
matrices by
\begin{align*}
\widetilde\Inv & := \SLtC \cap \sltC \\ & = \{\xi\in\Mat \mid \det(\xi) = 1, \tr(\xi) = 0\}.
\end{align*}
Notice that $\widetilde\Inv$ is invariant under $\pm\Id$, and the quotient %%%%
\begin{equation*}
\Inv := \widetilde\Inv/\{\pm\Id\} \subset\PGLtC
\end{equation*}
consists of all projective involutions of $\CP^1$.
It naturally identifies with the 
collection of unordered pairs of distinct points in $\CP^1$,
that is, the quotient
\begin{equation*}
\big(\CP^1 \times \CP^1 \setminus \Delta_{\CP^1} \big)/ \mathfrak{S}_2
\end{equation*}
of the complement in $\CP^1\times\CP^1$ of the 
diagonal 
\begin{equation*}
\Delta_{\CP^1} \subset \CP^1 \times \CP^1 
\end{equation*}
by the symmetric group $\mathfrak{S}_2$.
In \S\ref{sec:Hth}, we interpret $\widetilde\Inv$ as the space
of {\em oriented\/} geodesics in hyperbolic $3$-space $\Hth$.

\subsubsection*{Involutions and the complex projective line.}\label{sec:closedInvolutions}
\index{involutions}
Denote by $\overline{\Inv}$ the closure of $\Inv$ in the projective
space $\P(\sltC)$. The complement $\overline{\Inv}\setminus\Inv$
% consists of {\em singular projective transformations\/} corresponding
corresponds to $\CP^1$  embedded as
the diagonal $\Delta_{\CP^1}$ in the above
description.
For example the elements of $\overline{\Inv}$
corresponding to $0,\infty\in\CP^1$ are the respective lines
\begin{equation*}
\bmatrix 0 & * \\ 0 & 0 \endbmatrix,  
\bmatrix 0 & 0 \\ * & 0 \endbmatrix \subset \Mat.  
\end{equation*}
The closure corresponds to the full quotient space
\begin{equation*}
\big(\CP^1 \times \CP^1 \big)/ \mathfrak{S}_2. 
\end{equation*}
An element $\xi\in\PGLtC\setminus\{\Id\}$ stabilizes a unique element
$\iota_\xi\in\overline{\Inv}$. If $\xi$ is semisimple ($\#\Fix(\xi) = 2$),
then $\iota_\xi$ is the unique involution with the same fixed
points. Otherwise $\xi$ is parabolic ($\#\Fix(\xi) = 1$), and $\iota_\xi$ 
corresponds to the line 
\begin{equation*}
\Fix\big(\Ad(\xi)\big) = \Ker\big(\Id - \Ad(\xi)\big)\subset \sltC,
\end{equation*}
the Lie algebra centralizer of $\xi$ in $\sltC$.
Further discussion of semisimple elements in $\SLtC$ and $\PSLtC$
is given in \S\ref{sec:Hth}. 

Here is an elegant matrix representation.
If $\xi\in\SLtC$ is semisimple and $\neq\Id$, 
%%%%% then $\iota_\xi\in\Inv$ has two lifts to $\widetilde{\Inv}\subset\SLtC$. %%%%
then the two lifts of $\iota_\xi\in\Inv$ to $\widetilde{\Inv}\subset\SLtC$ differ by $\pm\Id$.
Since $\xi$ is semisimple, its %%%%% the 
{\em traceless projection\/}
\begin{equation*}
\xi' := \xi - \frac12 \tr(\xi) \Id 
\end{equation*}
satisfies
\begin{itemize}
\item $\tr(\xi') = 0$;
\item $\xi'$ commutes with $\xi$;
\item $\det(\xi') \neq 0$ (semisimplicity).
\end{itemize}
Choose $\delta\in\C^*$ such that 
\begin{equation*}
\delta^2 = \det(\xi') = \frac{4 - \tr(\xi)^2}4.
\end{equation*}
Then $\delta^{-1}\xi'\in\widetilde{\Inv}$  and represents the involution
$\iota_\xi$ centralizing $\xi$: 
\begin{equation}\label{eq:invformula} %%%%% 
\widetilde{\iota_\xi} \;=\; \pm \frac2{\sqrt{4 - \tr(\xi)^2}} \bigg(\xi - \frac{\tr(\xi)}2 \Id\bigg).
\end{equation}
This formula will be used later in \eqref{eq:hat}.

\subsubsection*{$3$-dimensional hyperbolic geometry.}\label{sec:Hth}
\index{hyperbolic three-space}

The group $\GLtC$ acts by orien\-ta\-tion-preserving isometries
on {\em hyperbolic $3$-space\/} $\Hth$. The kernel of the action 
equals the center of $\GLtC$, the group $\C^*$ of nonzero
scalar matrices. The quotient
\begin{equation*}
\PGLtC := \GLtC/\C^*
\end{equation*}
acts effectively on $\Hth$. 
The restriction of the quotient homomorphism 
\begin{equation*}
\GLtC\longrightarrow\PGLtC 
\end{equation*}
to $\SLtC\subset\GLtC$
defines an isomorphism
\begin{equation*}
\PSLtC \xrightarrow{\cong} \PGLtC.
\end{equation*}
The projective line $\CP^1$ identifies naturally
with the ideal boundary $\partial\Hth$.
The center of $\SLtC$ consists of $\pm\Id$, which is the kernel of the
actions on $\Hth$ and $\CP^1$. The only element of order two in $\GLtC$ is
$-\Id$, and an element of even order $2k$ in $\PGLtC$ corresponds to
an element of order $4k$ in $\GLtC$.
Elements of odd order $2k+1$ in $\PGLtC$ have two lifts to $\SLtC$,
one of order $2k+1$ and the other of order $2(2k+1)$.

We use the {\em upper-half-space model\/} of $\Hth$ as follows.
The algebra $\HH$ of {\em Hamilton quaternions\/} is
the $\R$-algebra generated by $1,i,\jj$ subject to the relations
\begin{equation*}
i^2 = \jj^2 = -1, i \jj + \jj i = 0.
\end{equation*}
\noindent 
$\HH$ contains the smaller subalgebra $\C$ having basis $\{1,i\}$. 
Define 
\begin{equation*}
\Hth := \{ z + u \jj \in \HH \mid   z\in \C, u\in\R, u > 0\} 
\end{equation*}
where 
\begin{equation*}
\xi = \bmatrix a & b \\ c & d \endbmatrix \in \GLtC
\end{equation*}
acts by
\begin{equation*}
z + u \jj \longmapsto 
\big(a (z + u \jj) + b\big)
\big(c (z + u\jj) + d\big)^{-1}. 
\end{equation*}
$\PSLtC$ is the group of orientation-preserving {\em isometries\/}
of $\Hth$ with respect to the {\em Poincar\'e metric\/}
\begin{equation*}
u^{-2} \big(\vert dz\vert^2 + du^2 \big)
\end{equation*}
of constant curvature $-1$.
The restriction of $\GLtC$ to $\partial\Hth$ identifies with %%%%
the usual projective
action of $\PGLtC$ on %%%%
\begin{equation*}
\partial\Hth := \CP^1 = \C \cup \{\infty\},
\end{equation*}
the space of complex lines ($1$-dimensional linear subspaces) in $\C^2$.

Oriented geodesics in $\Hth$ correspond to ordered pairs of distinct points
in $\CP^1$, via their endpoints.
Unoriented geodesics correspond to unordered pairs.
For example geodesics with an
endpoint at $\infty$ are represented by vertical rays $z + \R_+\jj$,
where $z\in\C$ is the other endpoint. The unit-speed parametrization
of this geodesic is 
\begin{align*}
\R &\longrightarrow \Hth \\
t &\longmapsto z + e^t \jj 
\end{align*}
Distinct $z_1,z_2\in\C$ span a geodesic in $\Hth$ whose unit-speed
parametrization is:
\begin{align*}
\R &\longrightarrow \Hth \\
t & \;\longmapsto\; \frac{z_1 \,+\, z_2}2  + \frac{z_2 - z_1}2\,  \bigg(\tanh(t) \;+\; \operatorname{sech}(t)\jj\bigg).
\end{align*}
A geodesic $l\subset \Hth$  
corresponds uniquely to the involution $\iota = \iota_l\in\PSLtC$ for which
\begin{equation*}
l = \Fix(\iota). 
\end{equation*}
% give formula
\noindent
For example, if $z_1,z_2\in\C$, the involution in $\PGLtC$ fixing $z_1,z_2$ 
is given by the pair of  matrices
\begin{equation}\label{eq:involutionformula}
\pm \frac{i}{z_1-z_2} \bmatrix z_1 + z_2 & -2 z_1 z_2 \\
2 & -(z_1+z_2) \endbmatrix \in \SLtC. 
\end{equation}
If $z_2 = \infty$, the corresponding matrices are:
\begin{equation}\label{eq:involutionformula2}
\pm 
i \bmatrix -1  & 2 z_1  \\ 0 & 1 \endbmatrix \in \SLtC. 
\end{equation}
\noindent 
Compare Fenchel~\cite{Fenchel} for more details.

Let $\xi\in\SLtC$ be {\em non-central:\/} $\xi \neq \pm \Id$. Then the following
conditions are equivalent:
\begin{itemize}
\item
$\xi$ has two distinct eigenvalues;
\item
$\tr(\xi)\neq \pm 2$;
\item
the corresponding collineation of $\CP^1$ has two fixed points;
\item
the correponding orientation-preserving isometry of $\Hth$ leaves invariant
a unique geodesic $\ell_\xi$, each of whose endpoints is fixed;
\item
a unique involution $\iota_\xi$ centralizes $\xi$.
\end{itemize}  %%% capitalization removed
(In the standard terminology, the corresponding isometry of $\Hth$
is either {\em elliptic\/} or {\em loxodromic.})

We shall say that $\xi$ is {\em semisimple.\/}
Otherwise $\xi$ is {\em parabolic:\/} it has a repeated eigenvalue (necessarily
$\pm 1$, because $\det(\xi)=1$), and fixes a {\em unique\/} point on $\CP^1$.

Suppose $\xi\in\SLtC$ and the corresponding 
isometry $\P(\xi)\in\PSLtC$ leaves invariant a geodesic $l\subset\Hth$.  Then
the restriction $\P(\xi)|_l$ is an isometry of $l\approx\R$.  

Any isometry
of $\R$ is either a translation of $\R$, a reflection in a point of
$\R$, or the identity. We distinguish these three cases as follows.
For concreteness choose coordinates so that $l$ is represented by
the imaginary axis $\R_+\jj\subset\Hth$ in the upper-half-space model.
The endpoints of $l$ are $0,\infty$:
\begin{itemize}
\item{\bf $\P(\xi)|_l$ acts by translation:}
Then $\P(\xi)$ is {\em loxodromic,\/} represented by
\begin{equation}\label{eq:diagonalmatrix}
\bmatrix \lambda & 0 \\ 0 & \lambda^{-1} \endbmatrix
\end{equation}
where $\lambda\in\C^*$ is a nonzero complex number and 
$\vert\lambda\vert \neq 1$. The fixed point set is %%%%
\begin{equation*}
\Fix\big(\P(\xi)\big) = \{0,\infty\}.  
\end{equation*}
The restriction of 
$\P(\xi)$ to $l$ is translation along $l$ by distance 
$2\log\vert\lambda\vert$, in the direction from $0$ (its repellor)
to $\infty$ (its attractor) if $\vert\lambda\vert > 1$.
(If $\vert\lambda\vert < 1$, then $0$ is the attractor and $\infty$ is the
repellor.)
\item{\bf $\P(\xi)|_l$ acts identically:}
Now $\P(\xi)$ is {\em elliptic\/} and is represented by the diagonal matrix
\eqref{eq:diagonalmatrix}, except now $\vert\lambda\vert = 1$.
If $\lambda = e^{i\theta}$, then $\P(\xi)$ represents a rotation through angle
$2\theta$ about $l$. In particular if $\lambda = \pm i$, then $\P(\xi)$ is the
involution fixing $l$. Although $\P(\xi)$ has order two in $\PGLtC$, its matrix
representatives in $\SLtC$ each have order $4$. 
(Compare Proposition~\ref{prop:matrixinvolution}.)

\item{\bf $\P(\xi)|_l$ acts by reflection:}
In this case $\P(\xi)$ interchanges the two endpoints $0,\infty$ and is necessarily
of order two. Its restriction $\P(\xi)|_l$ to $l$ fixes the point $p = \Fix\big(\P(\xi)\big)\cap l$,
and is reflection in $p$. The corresponding matrix is:
\begin{equation*} 
\bmatrix 0 & -\lambda  \\  \lambda^{-1} & 0 \endbmatrix
\end{equation*}
where $\lambda\in\C^*$ and $p = \vert\lambda\vert\jj$
is the fixed point of $\P(\xi)_l$. Necessarily $\P(\xi)\in\Inv$ and 
\begin{equation*}
\Fix\big(\P(\xi)\big) = \{\pm i \lambda\}.
\end{equation*}

\end{itemize}

\subsubsection*{Dihedral representations.}\label{sec:dihedral}
\index{dihedral representations}
The following lemma is crucial in the proof of 
Theorem~\ref{thm:factor}.

\begin{lemma}\label{lem:refl}
Suppose that $\xi\in\SLtC\setminus\{\pm\Id\}$ and $\iota\in\Inv$.
\begin{enumerate}
\item 
Suppose that $\#\Fix\big(\P(\xi)\big) = 2$.
Let $\ell_\xi\subset\Hth$ denote the unique $\xi$-invariant geodesic
(the geodesic with endpoints $\Fix\big(\P(\xi))$\big).
Then
\begin{equation}\label{eq:dih}
\iota \xi \iota = \xi^{-1} 
\end{equation}
if and only if 
$\iota$ preserves $\ell_\xi$ and its restriction acts by reflection.
In that case $\iota$ interchanges the two elements of $\Fix\big(\P(\xi)\big)$.
\item 
Suppose that $\#\Fix\big(\P(\xi)\big) = 1$.
Then 
$\iota \xi \iota = \xi^{-1}$ if and only if 
$\Fix\big(\P(\xi)\big)\subset \Fix(\iota)$. %%%
\end{enumerate}
\end{lemma}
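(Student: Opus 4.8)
The plan is to read the relation $\iota\xi\iota = \xi^{-1}$ in $\PGLtC$, where $\iota\in\Inv$ acts; equivalently, since any lift of $\iota$ to $\widetilde{\Inv}$ satisfies $\iota^2=-\Id$, this is the matrix identity $\iota\xi\iota^{-1}=\xi^{-1}$ in $\SLtC$. I would then reduce everything to the combinatorics of fixed points on $\CP^1=\partial\Hth$ after conjugating $\xi$ to a normal form. The governing observation is that $\Fix\big(\P(\xi^{-1})\big)=\Fix\big(\P(\xi)\big)$, so whenever the relation holds, conjugation by $\P(\iota)$ carries $\Fix\big(\P(\xi)\big)$ onto $\Fix\big(\P(\xi^{-1})\big)=\Fix\big(\P(\xi)\big)$; hence $\P(\iota)$ preserves the fixed-point set of $\P(\xi)$ on $\CP^1$. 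Both implications in each part then come down to matching this set-preservation with the description of $\iota|_{\ell_\xi}$ from \S\ref{sec:Hth}, plus one short matrix multiplication in the normal form.

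For part (1), with $\xi$ semisimple, I would conjugate so that $\Fix\big(\P(\xi)\big)=\{0,\infty\}$ and $\xi=\operatorname{diag}(\lambda,\lambda^{-1})$ with $\lambda\neq\pm1$, so that $\ell_\xi$ is the imaginary axis. By the governing observation $\P(\iota)$ permutes $\{0,\infty\}$, giving two cases. If $\P(\iota)$ fixes both endpoints, then $\P(\iota)$ and $\P(\xi)$ are semisimple with the same fixed-point pair, hence simultaneously diagonalizable and commuting; the relation then forces $\P(\xi)=\P(\xi)^{-1}$, which is excluded once $\P(\xi)\notin\Inv$, i.e. $\tr(\xi)\neq0$. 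Thus $\P(\iota)$ interchanges $0$ and $\infty$, so by the trichotomy of \S\ref{sec:Hth} its restriction to $\ell_\xi$ is a reflection and $\iota$ has the antidiagonal normal form $\bmatrix 0 & b \\ c & 0 \endbmatrix$ with $bc=-1$; a one-line multiplication then gives $\iota\xi\iota=-\xi^{-1}$, which is $\xi^{-1}$ in $\PGLtC$. Conversely, if $\iota$ preserves $\ell_\xi$ and acts by reflection, the same antidiagonal form yields the same computation, and it visibly interchanges the endpoints, which proves the final clause.

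For part (2), with $\xi$ parabolic, I would normalize $\Fix\big(\P(\xi)\big)=\{\infty\}$ and $\xi=\bmatrix 1 & t \\ 0 & 1 \endbmatrix$ with $t\neq0$. The governing observation again forces $\P(\iota)(\infty)=\infty$, that is $\infty\in\Fix(\iota)$, which is the forward implication. For the converse, any $\iota\in\Inv$ fixing $\infty$ is upper triangular and traceless, hence equal up to sign to $\bmatrix i & \beta \\ 0 & -i \endbmatrix$, and a direct computation gives $\iota\xi\iota=-\xi^{-1}$, again $\xi^{-1}$ in $\PGLtC$, for every $\beta$. No degenerate subcase arises here, since a parabolic element is never an involution.

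The one genuine subtlety — and the step I expect to need the most care — is the projective sign bookkeeping together with the excluded case $\P(\xi)\in\Inv$ in part (1): there $\P(\xi)=\P(\xi)^{-1}$, the relation degenerates to commutation, and the element $\iota=\P(\xi)$ itself satisfies it while fixing $\ell_\xi$ pointwise rather than reflecting it, so the stated equivalence genuinely needs $\tr(\xi)\neq0$ (equivalently $\P(\xi)\notin\Inv$). In the intended applications $\xi$ is the holonomy of a boundary geodesic and is loxodromic with $\tr(\xi)\neq0$, so this case never intervenes; I would simply flag the hypothesis $\tr(\xi)\neq0$ at the one place it is used. Everything else is routine once the two normal forms are fixed.
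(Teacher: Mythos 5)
Your proof is correct, and in outline it shares the paper's strategy (normalize $\xi$, work on $\Fix\big(\P(\xi)\big)\subset\CP^1$), but the organizing device is genuinely different. The paper rewrites \eqref{eq:dih} as $(\iota\xi)^2 = \Id$, so that $\iota\xi\in\Inv$ and $\xi = \iota(\iota\xi)$ factors as a product of two involutions, and then lets the geometry of such products carry both directions; the semisimple case is dispatched by the eigenline decomposition $\C^2 = L_1\oplus L_2$ with $\iota$ interchanging $L_1,L_2$, with no matrix computation displayed. You instead argue from the permutation that conjugation by $\P(\iota)$ induces on $\Fix\big(\P(\xi)\big)$ --- via $\Fix\big(\P(\xi^{-1})\big)=\Fix\big(\P(\xi)\big)$ --- and close each direction with one explicit antidiagonal (resp.\ upper-triangular) multiplication. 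What your route buys is precision at the degenerate point, and here you have caught something the published proof passes over: when $\tr(\xi)=0$, so that $\P(\xi)\in\Inv$, the ``only if'' of part (1) literally fails, since $\iota = \P(\xi)$ satisfies \eqref{eq:dih} (even at the matrix level, $\xi\cdot\xi\cdot\xi = -\xi = \xi^{-1}$) yet restricts to the identity on $\ell_\xi$ and fixes, rather than interchanges, the endpoints. In the paper's own factorization this is exactly the unexamined possibility $\iota\xi = \Id$, silently excluded when it asserts $\iota\xi\in\Inv$; likewise its unproved opening claim that $\iota$ interchanges the two fixed points needs $\lambda^2\neq -1$ for the eigenvalues $\lambda,\lambda^{-1}$. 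Since the later applications (Theorem~\ref{thm:factor} for loxodromic or parabolic holonomies, and the Fricke-space computations, where $|\tr|\ge 2$) never meet this case, nothing downstream is affected, but your flagged hypothesis $\tr(\xi)\neq 0$ is the honest form of part (1), and your proposal is if anything more complete than the paper's argument.
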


\begin{proof}
Consider first the case that $\xi$ is semisimple, that is, when 
$\#\Fix\big(\P(\xi)\big) = 2$. Let $\ell_\xi\subset\Hth$ be the $\xi$-invariant geodesic
with endpoints $\Fix\big(\P(\xi)\big)$. Then $\iota$ interchanges the two elements
of $\Fix\big(\P(\xi)\big)$. 
In terms of the linear representation, %%%%% the matrix representing
$\xi$ preserves a decomposition 
into eigenspaces 
\begin{equation*}
\C^2 = L_1\oplus L_2 
\end{equation*}
where each line $L_i\subset\C^2$ corresponds to a fixed point in $\CP^1$,
and $\iota$ interchanges $L_1$ and $L_2$.

When $\#\Fix\big(\P(\xi)\big) = 1$, the corresponding matrix has a unique eigenspace,
which we take to be the first coordinate line. Then $\xi$ is represented
by the upper-triangular matrix
\begin{equation*}
\pm \bmatrix 1 & u \\ 0 & 1 \endbmatrix 
\end{equation*}
and $\iota$ is also represented by an upper-triangular matrix, of the
form
\begin{equation*}
\pm \bmatrix i & w \\ 0 & -i \endbmatrix.
\end{equation*}
Rewrite \eqref{eq:dih} as:
\begin{equation*}
(\iota \xi)^2 = \Id 
\end{equation*}
so that $\iota \xi\in\Inv$. Thus $\xi$ factors as the product of
two involutions
\begin{equation*}
\xi = \iota (\iota \xi).
\end{equation*}
Conversely if $\iota,\iota'\in\Inv$, then the product 
$\xi := \iota\iota'$ satisfies \eqref{eq:dih}.
\end{proof}

\subsubsection*{Geometric interpretation of the Lie product.}
\index{Lie product, geometric interpretation}
These ideas provide an elegant formula for the common orthogonal
of the invariant axes of elements of $\PSLtC$. 
Suppose $\xi,\eta\in\SLtC$. Then the {\em Lie product\/}
\begin{equation}\label{eq:LieProduct}
\Lie(\xi,\eta) := \xi \eta - \eta \xi
\end{equation}
has trace zero, and vanishes if and only if $\xi, \eta$ commute.
Furthermore \eqref{eq:twokindsofcommutator} and 
Proposition~\ref{prop:irreducibility}, \eqref{item:irr} % if and only if 
$\Longleftrightarrow$
\eqref{item:trcomm}
imply that 
$\Lie(\xi,\eta)$ is invertible if and only if 
$\langle \xi, \eta\rangle$ acts irreducibly (as defined in \S\ref{sec:Coxeter}).
Suppose $\langle \xi,\eta\rangle$ acts irreducibly, so that $\Lie(\xi,\eta)$ defines
an element $\lambda\in\PSLtC$. 
Since $\tr\big(\Lie(\xi,\eta)\big) = 0$, the isometry $\lambda$ has order two,
that is, lies in $\Inv$.

Now 
\begin{align*}
\tr\big(\xi\, \Lie(\xi,\eta)\big) & = \tr\big(\xi(\xi\eta)\big) - \tr\big(\xi(\eta\xi)\big) \\
& = \tr\big(\xi(\xi\eta)\big) - \tr\big((\xi\eta)\xi\big) \\ & = 0
\end{align*}
which implies that $\xi\lambda$ also has order two, that is, $\lambda \xi\lambda = \xi^{-1}$.
Lemma \ref{lem:refl} implies that $\lambda$ acts by reflection on the invariant axis $\ell_\xi$.
Similarly $\lambda$ acts by reflection on the invariant axis $\ell_\eta$.
Hence the fixed axis $\ell_\lambda$ is orthogonal to both $\ell_\xi$ and $\ell_\eta$:

\begin{proposition}\label{prop:Lie}
If $\xi,\eta\in\GLtC$, then
the Lie product $\Lie(\xi,\eta)$ represents the common orthogonal geodesic 
$\perp(
\ell_{\P(\xi)},
\ell_{\P(\eta)})$ to the   %%% parenthesis added
invariant axes $\ell_{\P(\xi)}, \ell_{\P(\eta)}$ of $\P(\xi)$ and $\P(\eta)$ respectively.
\end{proposition}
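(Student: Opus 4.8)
The plan is to observe that the whole statement collapses, after two one-line trace computations, onto the dihedral characterization of Lemma~\ref{lem:refl}. Since $\Lie(a\xi,b\eta)=ab\,\Lie(\xi,\eta)$ for $a,b\in\C^*$, the projective class of the Lie product is insensitive to rescaling, so I would first normalize $\xi,\eta\in\SLtC$ and note that only the irreducible case carries geometric content (this matches the hypothesis under which the preceding paragraph introduced $\lambda$). Writing $L:=\Lie(\xi,\eta)$, the trace $\tr(L)=\tr(\xi\eta)-\tr(\eta\xi)=0$ vanishes, and by \eqref{eq:twokindsofcommutator} together with Proposition~\ref{prop:irreducibility} the matrix $L$ is invertible exactly when $\langle\xi,\eta\rangle$ acts irreducibly. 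In that case $\tr(L)=0$ and $\det(L)\neq 0$ let me rescale $L$ into $\widetilde{\Inv}$, so that $\lambda:=\P(L)\in\Inv$ is a genuine involution with fixed geodesic $\ell_\lambda=\Fix(\lambda)$.

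The heart of the argument is the identity $\tr(\xi L)=0$, which is immediate from cyclicity of the trace: $\tr\big(\xi(\xi\eta)\big)=\tr\big((\xi\eta)\xi\big)=\tr\big(\xi(\eta\xi)\big)$. Together with $\det(\xi L)=1$, Cayley--Hamilton gives $(\xi L)^2=-\Id$, exactly as in the proof of Lemma~\ref{lem:commonperp}, whence $\lambda\,\P(\xi)\,\lambda=\P(\xi)^{-1}$. I then invoke Lemma~\ref{lem:refl}: if $\P(\xi)$ is semisimple, the relation \eqref{eq:dih} forces $\lambda$ to preserve the axis $\ell_{\P(\xi)}$ and restrict to it as a reflection, so its fixed geodesic $\ell_\lambda$ meets $\ell_{\P(\xi)}$ orthogonally; if $\P(\xi)$ is parabolic, the lemma places the unique fixed point of $\P(\xi)$ on $\ell_\lambda$, the appropriate degenerate form of orthogonality. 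The symmetric computation $\tr(\eta L)=0$ yields the identical conclusion for $\eta$, so $\ell_\lambda$ is orthogonal to both $\ell_{\P(\xi)}$ and $\ell_{\P(\eta)}$.

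It remains to identify $\ell_\lambda$ with $\perp\big(\ell_{\P(\xi)},\ell_{\P(\eta)}\big)$, and this is where I expect the only genuine friction: the notation $\perp(\cdot,\cdot)$ presupposes a \emph{unique} common perpendicular, so I must verify that in the irreducible case the two axes are distinct and share no ideal endpoint. Irreducibility rules out a common endpoint on $\CP^1$ (a shared fixed point would give a common invariant line, contradicting Proposition~\ref{prop:irreducibility}), and it rules out $\ell_{\P(\xi)}=\ell_{\P(\eta)}$ as well, since coinciding axes would make $\xi,\eta$ simultaneously diagonalizable and hence commuting, forcing $L=0$. Granting this, the standard fact that two disjoint, non-asymptotic geodesics in $\Hth$ admit exactly one common perpendicular pins down $\ell_\lambda=\perp\big(\ell_{\P(\xi)},\ell_{\P(\eta)}\big)$ and finishes the proof; the degenerate reducible cases correspond precisely to $L$ leaving $\Inv$ for its boundary closure, where the perpendicular escapes to $\partial\Hth$.
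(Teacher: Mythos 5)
Your proposal is correct and follows essentially the same route as the paper's own proof: show $\tr\big(\Lie(\xi,\eta)\big)=0$ and invertibility via \eqref{eq:twokindsofcommutator} and Proposition~\ref{prop:irreducibility}, compute $\tr\big(\xi\,\Lie(\xi,\eta)\big)=0$ by cyclicity of the trace so that $\lambda\,\P(\xi)\,\lambda=\P(\xi)^{-1}$, and then invoke Lemma~\ref{lem:refl} to conclude that $\ell_\lambda$ meets both invariant axes orthogonally. Your closing verification that irreducibility forces the two axes to be distinct and non-asymptotic (so that $\perp\big(\ell_{\P(\xi)},\ell_{\P(\eta)}\big)$ is well defined) is a point the paper leaves implicit, and is a welcome addition.
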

% Therefore {\em the Lie product $\Lie(\xi,\eta)$ represents the common orthogonal geodesic 
%$\perp(\ell_\xi,\ell_\eta)$ to the 
%invariant axes $\ell_\xi, \ell_\eta$ of $\xi$ and $\eta$, respectively.}
Compare Marden~\cite{Marden} and the references given there.
\index{Lie product}

\subsubsection*{Geometric proof of Theorem~\ref{thm:factor}.}
\index{extension to Coxeter group, geometric proof}

\begin{proof}[Proof of Theorem~\ref{thm:factor}]

Abusing notation, write $X,Y,Z$ for 
\begin{equation*}
\rho(X),\rho(Y),\rho(Z)\in\PGLtC 
\end{equation*}
respectively.
We seek respective involutions 
\begin{equation*}
\rho(\iota_{XY}),\,\rho(\iota_{YZ}),\,\rho(\iota_{ZX}), 
\end{equation*}
which we respectively denote
$\rxy,\,\ryz,\,\rzx$.
These involutions will be the ones fixing the respective
pairs. For example we take $\rxy$ to be the involution
% fixing $\Fix(X),\Fix(Y)$, and similarly for $\ryz$ and $\rzx$.
fixing $\perp(l_X,l_Y)$, and similarly for $\ryz$ and $\rzx$.
%%% double-check this

Suppose that $\langle X,Y\rangle\subset\SLtC$ acts irreducibly on $\C^2$.
Write $Z = Y^{-1} X^{-1}$ so that 
\begin{equation*}
X Y Z = \Id .
\end{equation*}
Since $\langle X,Y\rangle$ acts irreducibly, none of $X,Y,Z$ 
act identically. 
% Furthermore $\Fix(X)\cap\Fix(Y)=\emptyset$.

Let $\rxy\in\Inv$  be the unique involution such that
\begin{align}
\rxy X  \rxy &= X^{-1} \label{eq:conjinv} \\
\rxy Y  \rxy &= Y^{-1} \notag
\end{align}
respectively.
(By Proposition~\ref{prop:Lie}, it is represented by the Lie product $\Lie(X,Y)$.)
The involution $\rxy$ will be specified by its fixed
line $\lxy = \Fix(\rxy)$, which is defined as follows:
If both $X,Y$ are semisimple, then 
Lemma~\ref{lem:refl} implies $\rxy$ is the involution
fixing the unique common orthogonal geodesic 
to the invariant axes of $X$ or $Y$.
If both $X,Y$ are parabolic, then $\rxy$ is the involution
in the geodesic bounded by the fixed points of $X,Y$.
Finally consider the case when one element is semisimple and the other element is parabolic.
Then $\lxy$ is the unique geodesic,  %%%
for which one endpoint is the fixed point of the 
parabolic element, and which is  
orthogonal to the invariant axis of the semisimple element.

Similarly define lines
$\lyz,\lzx$ with respective involutions
$\ryz,\rzx\in\Inv$. 
The triple $(\rxy,\ryz,\rzx)$ defines the homomorphism
$\hat\rho$ of Theorem~\ref{thm:factor}.

{\bf Claim:\/} $X = \rzx \rxy$. 
To this end we show $X\rxy$ equals $\rzx$. 
First, $X\rxy$ fixes $\Fix(X)$, since both $X$ and $\rxy$ fix $\Fix(X)$.
By \eqref{eq:conjinv}, 
\begin{equation*}
\rxy X Y \rxy = X^{-1} Y^{-1} = X^{-1} (XY)^{-1} X.
\end{equation*}
Equivalently,
\begin{equation*}
\rxy Z^{-1} \rxy = X^{-1} Y^{-1} = X^{-1} Z X,
\end{equation*}
which implies
\begin{equation*}
(X\rxy) Z^{-1} (X\rxy)^{-1} = Z. 
\end{equation*}
Now Lemma~\ref{lem:refl} (1) implies that $X\rxy$ preserves
$\Fix(Z)$ and its restriction to the corresponding line
is a reflection. Thus $X\rxy$ is itself an involution $\ell_Z$
with $\Fix(X\rxy)$ orthogonal to the axis of $Z$.

Since $X\rxy$ fixes $\Fix(X)$, it follows that $X\rxy = \rzx$ as claimed.
Similarly $Y\ryz = \rxy$ and $Z\rzx = \ryz$,
completing the proof of Theorem~\ref{thm:factor}.
\end{proof}

\subsection{Orthogonal reflection groups.}
An algebraic proof of Theorem~\ref{thm:factor} involves three-dimensional inner product
spaces and is described % in detail 
in Goldman~\cite{Topcomps}. 
This proof exploits the isomorphism $\PSLtC\longrightarrow\SOthC$.

\subsubsection*{The $3$-dimensional orthogonal representation of $\PSLtC$.}
\index{three-dimensional orthogonal representation}

Let $W = \C^2$ with a nondegenerate symplectic form $\omega$.
% \begin{align*}
% W \times W &\xrightarrow{\omega} \C \\
% (u,v ) &\longmapsto u_1 v_2 - u_2 v_1. 
% \end{align*}
The {\em symmetric square \/} $\Sym^2(W)$ is a 3-dimensional vector space based on monomials
$e\cdot e, e\cdot f, f\cdot f$, where $e,f$ is a basis of $W$,
and $x \cdot y$ denotes the {\em symmetric product\/} of $x,y$ (the image of the
tensor product $x \otimes y$ under symmetrization).
$\Sym^2(W)$ inherits a symmetric inner product defined by:
\begin{equation*}
(u_1\cdot u_2, v_1\cdot v_2 ) \;\longmapsto\; 
\frac12\,\big(\, \omega(u_1,v_1)\omega(u_2,v_2) + \omega(u_1,v_2)\omega(u_2,v_1)\,\big).
\end{equation*}
If $e,f\in W$ is a symplectic basis for $W$, the corresponding inner product for
$\Sym^2(W)$ has matrix
\begin{equation*}
\bmatrix 0 & 0 & 1 \\ 0 &  -1/2 & 0 \\ 1 & 0 & 0\endbmatrix
\end{equation*}
with respect to the above basis % $e\cdot e, e\cdot f, f\cdot f$ 
of $\Sym^2(W)$. In particular the inner product is nondegenerate.
Every $\xi\in\SLtC$ induces an isometry of $\Sym^2(W)$ with respect to this inner product.
This correspondence defines a local isomorphism
\begin{equation*}
\SLtC \xrightarrow{\Sym^2} \SOthC 
\end{equation*}
with kernel $\{\pm\Id\}$ and a resulting {\em isomorphism\/}
$\PSLtC\longrightarrow\SOthC$.

If $\xi\in\SLtC$, then
\begin{equation}\label{eq:trSym2}
\tr\big(\Sym^2(\xi)\big) = \tr(\xi)^2 - 1.
\end{equation}
For example, the diagonal matrix
\begin{equation*}
\xi = \bmatrix \lambda & 0 \\ 0 & \lambda^{-1} \endbmatrix %%% comma removed
\end{equation*}
induces the diagonal matrix
\begin{equation*}
\Sym^2(\xi) = \bmatrix \lambda^2 & 0 & 0  \\ 0 & 1 & 0 \\ 0 & 0 &\lambda^{-2} \endbmatrix
\end{equation*}
and 
\begin{equation*}
\tr\big(\Sym^2(\xi)\big) = \lambda^2 +  1 + \lambda^{-2} = (\lambda + \lambda^{-1})^2 -1.
\end{equation*}

Alternatively, this is the adjoint representation of $\SLtC$ on its Lie algebra
$\slt\cong\Sym^2(W)$. Here the standard basis of $\C$ is
\begin{equation*}
e = \bmatrix 1 \\ 0 \endbmatrix, \quad 
f = \bmatrix 0 \\ 1 \endbmatrix
\end{equation*}
and the monomials correspond to
\begin{equation*}
e\cdot e = \bmatrix 1 & 0 \\ 0 & 0 \endbmatrix, \quad
e\cdot f = \frac12\bmatrix 0 & 1 \\ 1 & 0 \endbmatrix, \quad %%%
f\cdot f = \bmatrix 0 & 0 \\ 0 & 1 \endbmatrix.
\end{equation*}
The inner product corresponds to the {\em trace form\/}
\begin{equation*}
(X,Y) \longmapsto\; \frac12\, \tr(XY) 
\end{equation*}
which is $1/8$ the Killing form on $\sltC$.

\subsubsection*{$3$-dimensional inner product spaces.}
\index{three-dimensional inner product spaces.}
%%%%%
Let $e_1,e_2,e_3$ denote the standard basis of $\C^3$.
%%%%%Let  $V = \C^3$ with standard basis $e_1,e_2,e_3$. 
A $3\times 3$ symmetric matrix $B$ determines an inner product $\BB$ on $\C^3$ 
by the usual rule:
\begin{equation*}
(v,w) \stackrel{\BB}\longmapsto  v^\dag B w.
\end{equation*}
We suppose that $\BB$ is nonzero on $e_1,e_2,e_3$; 
in fact, let's normalize $\BB$ so that its basic values
are $1$:
\begin{equation*}
\BB(e_i,e_i) = 1 \text{~for~} i=1,2,3. 
\end{equation*}
In other words, the diagonal entries satisfy $B_{11} =B_{22}= B_{33} = 1$.

Let $R_i = R_i^{(B)}$ denote the orthogonal reflection in $e_i$ defined by $\BB$:
\begin{equation*}
v \stackrel{R_i}\longmapsto   v - 2\, \BB(v,e_i) e_i
\end{equation*}
with corresponding matrix: 
\begin{equation*}
R_i \;:=\; \Id - 2 e_i (e_i)^\dag B. 
\end{equation*}
($e_i(e_i)^\dag B$ is the $3\times 3$ matrix with the same $i$-th row as $B$ and
the other two rows zero.) Since 
\begin{equation*}
\Id = \BB(e_i,e_i) = (e_i)^\dag B e_i \qquad \text{~(matrix multiplication),} %%%%
\end{equation*}
\begin{align*}
R_i^\dag B R_i - B  
&  = \big(\Id - 2 B e_i(e_i)^\dag\big) B \big(\Id - 2 e_i (e_i)^\dag B\big)  - B \\
& = - 2 Be_i(e_i)^\dag B - 2 Be_i(e_i)^\dag B  + 4 Be_i(e_i)^\dag B e_i (e_i)^\dag B  \\ %%%%
& = - 2 Be_i(e_i)^\dag B - 2 Be_i(e_i)^\dag B  + 4 Be_i(e_i)^\dag B \\
& = 0
\end{align*}
so {\em $R_i$ is orthogonal with respect to $\BB$.\/}
 
Thus the matrix $B$ determines a triple of involutions
$R_1^{(B)}, R_2^{(B)}, R_3^{(B)}$  
in the {\em orthogonal group of $\BB$\/}:
\begin{equation*}
\OB := \{ \xi \in \GLtC \mid  \xi^\dag B \xi = B \}. 
\end{equation*}
In other words, $B$ defines a representation $\hat\rho := \hat\rho^{(B)}$ 
of the free product 
\begin{equation*}
\hat\pi := \Z/2 \star  \Z/2 \star  \Z/2 
\end{equation*}
in $\OB$, taking the free generators
$\iota_{XY}, \iota_{YZ}, \iota_{ZX}$
of $\hat\pi$ into $R_1^{(B)},R_2^{(B)},R_3^{(B)}$
respectively. The restriction $\rho :=\rho^{(B)}$ of $\hat\rho^{(B)}$ 
to the index-two subgroup 
\begin{equation*}
\Z\star\Z\cong \pi \subset \hat\pi 
\end{equation*}
(compare \S\ref{sec:hex})
assumes values in the subgroup 
\begin{equation*}
\SOB := \SLthC \cap \OB.  
\end{equation*}
When $\BB$ is nondegenerate, then $\SOB\cong \SOthC$ 
(a specific isomorphism corresponds to an orthonormal basis for $\BB$).
There are exactly $4$ lifts $\tilde\rho$ 
of $\rho$ to the double covering-space
\begin{equation*}
\SLtC \longrightarrow \SOthC \xrightarrow{\cong} \SOB
\end{equation*}
To see this, for each generator $X,Y,Z$ of $\pi$, its $\rho$-image has exactly two lifts, differing by $\pm\Id$. 
Lifting the generators to $\widetilde{\rho(X)},\widetilde{\rho(Y)},\widetilde{\rho(Z)}$ respectively,
exactly half of the eight choices satisfy  %%%%
\begin{equation}\label{eq:liftedequation}
\widetilde{\rho(X)}
\widetilde{\rho(Y)}
\widetilde{\rho(Z)} = \Id,
\end{equation}
(as desired), 
and for the other four choices the product equals $-\Id$.

Choose one of the four lifts
satisfying \eqref{eq:liftedequation}, and denote it $\tilde\rho$.
If $i\neq j$, the trace of $R_iR_j\in\SLthC$ equals $4 (B_{ij})^2 -1$. 
For example, take $i=1, j=2$:
\begin{align*}
\widetilde{\rho(Z)} = R_1 R_2 & = 
\bmatrix -1 & -2 B_{12}& -2 B_{13} \\ 0 & 1 & 0 \\ 0 & 0 & 1 \endbmatrix
\bmatrix 1 & 0 & 0 \\ -2 B_{12} & -1& -2 B_{23} \\ 0 & 0 & 1 \endbmatrix \\
& = \bmatrix 4 (B_{12})^2 -1 & -2 B_{12}& 4 B_{23}B_{12} -2 B_{13} \\ 
-2 B_{12} & -1 & -2 B_{23} \\ 0 & 0 & 1 \endbmatrix
\end{align*}
has trace $4(B_{12})^2 -1$.

This calculation gives another proof of surjectivity in % the Vogt-Fricke theorem, 
Theorem~\ref{thm:vogt}
(as in \cite{Topcomps}).  
For $(x,y,z)\in\C^3$, the matrix
\begin{equation}\label{eq:bilinearform}
B = \bmatrix 1 & z/2 & y/2 \\ z/2 & 1 & x/2 \\ y/2 & x/2 & 1 \endbmatrix 
\end{equation}
defines a bilinear form $\BB$ and a representation
$\rho^{(B)}$ as above.

The corresponding $\SLtC$-traces of the $\trho$-images of the generators $X,Y,Z$ of $\pi$ satisfy:
\begin{align*}
\tr\big(\trho(X)\big)  & = \pm 2 B_{23} \\
\tr\big(\trho(Y)\big)  & = \pm 2 B_{13} \\
\tr\big(\trho(Z)\big)  & = \pm 2 B_{12} 
\end{align*}
because  (using \eqref{eq:trSym2}): 
\begin{align*}
\tr\big(\tilde{\rho}(X)\big)^2  & = 1 + \tr\bigg(\Sym^2\big(\tilde\rho(X)\big)\bigg) = 1 + \tr(R_2 R_3) = 4 (B_{23})^2 \\
\tr\big(\tilde{\rho}(Y)\big)^2  & = 1 + \tr\bigg(\Sym^2\big(\tilde\rho(Y)\big)\bigg) = 1 + \tr(R_3 R_1) = 4 (B_{31})^2 \\
\tr\big(\tilde{\rho}(Z)\big)^2  & = 1 + \tr\bigg(\Sym^2\big(\tilde\rho(Z)\big)\bigg) = 1 + \tr(R_1 R_2) = 4 (B_{12})^2 
\end{align*}
Now adjust the lifts as above to arrange a representation
$\pi\xrightarrow{\tilde\rho}\SLtC$ with
\begin{align*}
\tr\big(\tilde\rho(X)\big) & = x \\
\tr\big(\tilde\rho(Y)\big) & = y \\
\tr\big(\tilde\rho(Z)\big) & = z
\end{align*}
as desired.

When $\kappa(x,y,z) =2$, the matrix $B$ is singular and we obtain {\em reducible\/} representations.
There are two cases, depending on whether $\rank(B) = 2$ or $\rank(B) = 1$. 
(Since $B\neq 0$, its rank cannot be zero.)

\subsection{Real characters and real forms.}
\index{real characters}
A {\em real character\/} $(x,y,z)\in\R^3$ corresponds to a representation
of a rank-two free group in one of the two {\em real forms\/}
$\SUt, \SLtR$ of $\SLtC$. This was first stated and proved 
in general by Morgan-Shalen~\cite{MorganShalen}. 
Geometrically, $\SUt$-representations are those which fix a point
in $\Hth$, and $\SLtR$-representations are those which preserve a plane
$\Ht\subset\Hth$ as well as an orientation on the plane.
\index{$\SUt$-representations}

\begin{theorem}\label{thm:realcharacters}
Let $(x,y,z)\in\R^3$ and
\begin{equation*}
\kappa(x,y,z) := x^2 + y^2 + z^2 - x y z - 2. 
\end{equation*}
Let $\pi\xrightarrow{\rho}\SLtC$ be a representation with character
$(x,y,z)$. Suppose first that $\kappa(x,y,z)\neq 2$. 
\begin{itemize}
\item
If $-2\le x,y,z \le 2$ and $\kappa(x,y,z)<2$, then $\rho(\pi)$ fixes
a unique point in $\Hth$ and is conjugate to a $\SUt$-representation.
\item
Otherwise $\rho(\pi)$ preserves a unique plane in $\Hth$ and its restriction
to that plane preserves orientation.
\end{itemize}
If $\kappa(x,y,z) = 2$, then $\rho$ is reducible and one of the following 
must occur:
\begin{itemize}
\item
$\rho(\pi)$ acts identically on $\Hth$, in which case $\rho(\pi) \subset \{\pm\Id\}$
is a {\em central representation.\/}
\item
$\rho(\pi)$ fixes a line in $\Hth$, in which case $-2\le x,y,z\le 2$ and 
$\rho$ is conjugate to a representation taking values in $\SOt = \SUt \cap \SLtR$.
\item
$\rho(\pi)$ acts by transvections along a unique line in $\Hth$, in which case 
\begin{equation*}
x,y,z \in \R \setminus \big(-2,2\big). 
\end{equation*}
Then $\rho$ is conjugate to a representation taking values in $\SOoo \subset \SLtR$. 
\item
$\rho(\pi)$ fixes a unique point on $\partial_\infty\Hth$.
\end{itemize}
\end{theorem}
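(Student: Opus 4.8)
The guiding principle is that a real character forces the invariant quadratic form $B$ of \eqref{eq:bilinearform} to be \emph{real symmetric}, so that the real form of $\SLtC$ carrying $\rho$ is detected entirely by the signature (or, in the degenerate case, the rank) of $B$. The plan is therefore to read off everything from $B$. A direct expansion gives $\det B = \tfrac14(2-\kappa)$, so $\kappa\neq 2$ is exactly the nondegeneracy of $B$, consistent with Theorem~\ref{thm:vogt} and Proposition~\ref{prop:irreducibility}. First I would record the dictionary under $\PSLtC\cong\SOthC$: a real $B$ of signature $(3,0)$ has compact orthogonal group and corresponds to the real form $\SUt$, i.e.\ the stabilizer of a point of $\Hth$, while signature $(2,1)$ corresponds to $\SLtR$, the orientation-preserving stabilizer of a plane $\Ht\subset\Hth$. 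Establishing this correspondence between a real structure on $\Sym^2$ and a real form of $\SLtC$ is the one genuinely structural point to nail down.

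For $\kappa\neq 2$ the representation is irreducible (Proposition~\ref{prop:irreducibility}), and I would apply Sylvester's criterion to the real symmetric $B$. Its leading principal minors are $1$, $1-z^2/4$, and $\det B=\tfrac14(2-\kappa)$, so $B$ is positive definite iff $|z|<2$ and $\kappa<2$; positive definiteness then forces the remaining $2\times 2$ principal minors $1-x^2/4$ and $1-y^2/4$ to be positive, recovering $|x|,|y|<2$. Thus $B\succ 0$ is equivalent to $\kappa<2$ together with $-2\le x,y,z\le 2$ (the boundary being vacuous, since $|z|=2$ already forces $\kappa=(x\mp y)^2+2\ge 2$), and this is precisely the $\SUt$ case in which $\rho(\pi)$ fixes a point of $\Hth$. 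In every remaining situation---either $\kappa>2$, so $\det B<0$ and the signature is $(2,1)$, or $\kappa<2$ with some trace outside $[-2,2]$, forcing signature $(1,2)$---the form is indefinite and $\rho(\pi)$ is conjugate into $\SLtR$, preserving a unique plane on which it acts preserving orientation. Uniqueness of the fixed point, respectively the plane, follows from irreducibility via Schur's lemma: the invariant definite (resp.\ indefinite) form is unique up to scale, hence so is the corresponding geometric object.

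For $\kappa=2$ the form $B$ is singular and $\rho$ is reducible, hence conjugate over $\C$ to upper-triangular matrices; let $\chi\colon\pi\to\C^*$ be the diagonal (eigenvalue) character. Reality of all traces means $a+a^{-1}\in\R$ for every eigenvalue $a=\chi(\gamma)$, which holds iff $a\in\R^*$ or $|a|=1$; since the subset of $\C^*$ consisting of real numbers together with the unit circle is not closed under multiplication, yet $\operatorname{image}(\chi)$ is a subgroup, that image lies entirely in $\R^*$ or entirely in the unit circle. I would then separate cases accordingly: if $\operatorname{image}(\chi)$ lies in the unit circle and is nontrivial, the semisimple part conjugates into $\SOt=\SUt\cap\SLtR$ and $\rho(\pi)$ fixes a line in $\Hth$ with $-2\le x,y,z\le 2$; if it lies in $\R^*$ and is nontrivial, $\rho(\pi)$ conjugates into $\SOoo$ and acts by transvections along a line, forcing $x,y,z\in\R\setminus(-2,2)$; if $\chi$ is trivial (all eigenvalues $\pm1$) then $\rho$ is either central, $\rho(\pi)\subset\{\pm\Id\}$, or genuinely unipotent, in which case the single common eigenline is the unique fixed point on $\partial_\infty\Hth$. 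The splitting of the reducible locus, and in particular the distinction between the last two outcomes, can be organized by $\operatorname{rank}(B)\in\{1,2\}$, as anticipated above.

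The main obstacle is the degenerate stratum $\kappa=2$: there the geometry is not pinned down by a single nondegenerate invariant form, and one must combine the semisimple/unipotent decomposition with the reality constraint to separate the four outcomes cleanly---in particular ruling out ``mixed'' elliptic--hyperbolic images, which are excluded precisely because $z=\chi(\gamma)\chi(\delta)+\big(\chi(\gamma)\chi(\delta)\big)^{-1}$ must also be real---and matching each stratum to the rank of $B$. The nondegenerate case, by contrast, reduces to the bookkeeping of Sylvester's criterion once the real-structure-versus-real-form dictionary of the first paragraph is in place.
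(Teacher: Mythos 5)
Your treatment of the nondegenerate stratum $\kappa\neq 2$ is essentially the paper's own proof: the paper likewise reads everything off the real symmetric matrix $B$ of \eqref{eq:bilinearform}, uses $4\det B = 2-\kappa(x,y,z)$, and sorts the signatures $(3,0)$ versus $(2,1)$, $(1,2)$ by exactly your minor computations (your remark that $|z|=2$ already forces $\kappa\ge 2$, so the closed inequalities cost nothing, is a correct and useful touch). The genuine gap is in your $\kappa=2$ trichotomy, which is organized by the image of the diagonal character $\chi$ alone and thereby conflates $\rho$ with its semisimplification. Take $\rho(X)=\begin{pmatrix}\lambda&0\\0&\lambda^{-1}\end{pmatrix}$ with $\lambda\in\R$, $\lambda>1$, and $\rho(Y)=\begin{pmatrix}1&1\\0&1\end{pmatrix}$. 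Then $\kappa=2$, the character is real, and $\operatorname{image}(\chi)=\langle\lambda\rangle\subset\R^*$ is nontrivial, so your second sub-case asserts that $\rho(\pi)$ is conjugate into $\SOoo$ and acts by transvections along a line; in fact this group preserves no geodesic at all and fixes a unique point of $\partial_\infty\Hth$, the theorem's fourth case. The same failure occurs in your unit-circle sub-case with $\rho(X)=\mathrm{diag}(e^{i\theta},e^{-i\theta})$ and $\rho(Y)$ parabolic: $\operatorname{image}(\chi)$ is nontrivial in the circle, yet $\rho(\pi)$ fixes no line in $\Hth$. The repair is to split \emph{first} on whether $\rho$ is simultaneously diagonalizable: if not, the invariant line in $\C^2$ is unique (two distinct invariant lines would diagonalize $\rho$), which is precisely the fourth case; only for diagonal $\rho$ does your $\chi$-analysis --- including your correct exclusion of mixed hyperbolic--elliptic images via reality of $z$ --- deliver the central, $\SOt$, and $\SOoo$ cases with the stated trace constraints. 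Relatedly, since $\operatorname{rank}(B)$ depends only on the character, it cannot by itself organize the four outcomes: a single character with $\kappa=2$ can be shared by representations landing in different bullets (e.g.\ diagonal versus parabolic), so the rank stratification you invoke at the end of the paragraph does not separate them.

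A second, smaller defect is the uniqueness argument. Schur's lemma does give uniqueness of the fixed point (two invariant positive Hermitian forms $H_1,H_2$ yield $H_1^{-1}H_2$ in the commutant, hence scalar), but your claim that the invariant \emph{indefinite} form is unique up to scale fails for dihedral representations, which are irreducible on $\C^2$ while $\Sym^2$ is reducible. For $\rho(X)=\mathrm{diag}(\lambda,\lambda^{-1})$, $\lambda>1$, and $\rho(Y)=\begin{pmatrix}0&1\\-1&0\end{pmatrix}$ (character $(x,0,0)$ with $\kappa=x^2-2>2$), the invariant symmetric forms on $\Sym^2(\C^2)$ form a two-dimensional space, and correspondingly $\rho(\pi)$ preserves \emph{two} planes in $\Hth$: the one over $\R\cup\{\infty\}$ and the one over $i\R\cup\{\infty\}$. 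On the second plane $\rho(Y)$ restricts to a reflection, reversing orientation; so the uniqueness asserted in the theorem is uniqueness of the plane on which the restriction is orientation-preserving, and your argument must be supplemented by this orientation bookkeeping rather than by uniqueness of the form. To be fair, the paper's own write-up only sketches the $\kappa=2$ stratum (it records $\operatorname{rank}(B)\in\{1,2\}$ and stops), so your instinct to complete that case is the right one; but as written, both the $\chi$-trichotomy and the Schur uniqueness step need the corrections above before the proof is sound.
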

Recall that $\SOoo$ is isomorphic to the multiplicative group $\R^*$ of nonzero real numbers,
and is conjugate to the subgroup of $\SLtR$ consisting of diagonal matrices.

Corollary~\ref{cor:factor} associates to a generic 
%%%%% is ``generic'' the term we're using???
representation $\rho$ an ordered triple $\iota^\rho$ of geodesics in $\Hth$.
%In terms of the triple $\iota^\rho$ of geodesics in $\Hth$ corresponding to a representation $\rho$, 
% as in Corollary~\ref{cor:factor}, there are the following corresponding cases. 
When $\rho$ is irreducible, the corresponding cases for  $\iota^\rho$ are the following:
% We only consider the cases when $\rho$ is irreducible.
\begin{itemize}
\item
If $\rho$ fixes a unique point $p\in\Hth$,
then the three lines are distinct and intersect in $p$.
Conversely if the three lines are concurrent, then $\rho$ is conjugate to an $\SUt$-representation.
\item
If $\rho$ preserves a unique plane $P$, then the three lines are distinct. 
There are two cases:
\begin{itemize}
\item
The three lines are orthogonal to $P$;
\item
The three lines lie in $P$.
\end{itemize}
\end{itemize}
The first case, when the lines are orthogonal to $P$, occurs when $\kappa(x,y,z) <2$. 
In this case the corresponding involutions preserve orientation on $P$.
The second case, when the lines lie in $P$, occurs when $\kappa(x,y,z) > 2$.
In that case the involutions restrict to reflections in geodesics in $P$
which reverse orientation.

\subsubsection*{Real symmetric $3\times 3$ matrices.}
\index{three-by-three real symmetric matrices}

We deduce these facts from the classification given in Theorem~\ref{thm:vogt}.
First assume that $(x,y,z)$ is an {\em irreducible\/} character,
that is, $\kappa(x,y,z)\neq 2$. Theorem~\ref{thm:vogt} implies that
$(x,y,z)$ is the character of the representation $\rho$
given by \eqref{eq:explicitrep}, and all such characters are $\PGLtC$-conjugate.

% First of all note that the 
The matrix $B$ defining the bilinear form $\BB$ in 
\eqref{eq:bilinearform} satisfies: 
\begin{equation*}
% 8 \det(B) = 8 + 2 xyz - 2 (x^2 + y^2 + z^2)
4\, \det(B) \;=\; 2 - \kappa(x,y,z) 
\end{equation*}
so $\BB$ is degenerate if and only if $\kappa(x,y,z) = 2$,
in which case $\rho$ is reducible.

Suppose that $\BB$ is nondegenerate, so that 
either 
$\kappa(x,y,z) > 2$ or
$\kappa(x,y,z) < 2$. 
Suppose first that $\kappa(x,y,z) > 2$. Then
$\det(B) < 0$. Since the diagonal entries of $B$ are positive, 
$\BB$ is indefinite of signature $(2,1)$. In particular, it cannot
be negative definite. 
In this case the triple of lines corresponding to $\rho$ are all
coplanar. The corresponding involutions reverse orientation on $P$
and act by reflections of $P$ in the three geodesics respectively.

When $\kappa(x,y,z) < 2$, there are two cases: either $\BB$ is positive
definite (signature $(3,0)$) or indefinite (signature $(1,2)$).
The restriction of $\BB$ to the coordinate plane spanned by 
$e_i$ and $e_j$ is given by the $2\times 2$ symmetric matrix
\begin{equation*}
\bmatrix 1 & B_{ij} \\  B_{ij} & 1 \endbmatrix  
\end{equation*}
which is positive definite if and only if $-1 < B_{ij} < 1$.
Thus $\BB$ is positive definite if and only if 
$-2 < x,y,z < 2$.

Otherwise $\BB$ is indefinite and $\rho$ corresponds to a
representation in $\SOot$.
In this case the triple of lines in $\Hth$ are all orthogonal
to the invariant plane $P$ in $\Hth$. 
The corresponding three involutions preserve orientation on  $P$ and
act by symmetries about points in $P$.

\subsubsection*{The two-dimensional normal form.}
\index{normal form for characters of $\F_2$}
Another approach to finding a representation with given traces %%%
involves a direct computation with the explicit normal form 
\eqref{eq:explicitrep} as follows. Let $(x,y,z)$ be as above.
First solve  $z = 2\cos(\theta)$ to obtain representative matrices:
\begin{equation*}
\xi_x := \bmatrix x &  -1  \\ 1 & 0 \endbmatrix,\;
\eta_{y,\theta} := \bmatrix 0 &  e^{-i\theta}  \\ -e^{i\theta} & y \endbmatrix,\; %%%%
\end{equation*}
with a slight change of notation from \eqref{eq:explicitrep}.

A Hermitian form on $\C^2$ is given by a Hermitian $2\times 2$-matrix $H$. 
A complex $2\times 2$ matrix $H$ is Hermitian $\Longleftrightarrow H = \bar{H}^\dag$.
The corresponding Hermitian form on $\C^2$ is:
\begin{equation*}
(u,v) \longmapsto   \bar{v}^\dag H u,
\end{equation*}
where $u,v\in\C^2$.
A linear transformation $\C^2\xrightarrow{\xi}\C^2$ preserves $H$ 
$\Longleftrightarrow \bar{\xi}^\dag H  \xi = H$.

The $\xi_x$-invariant Hermitian forms comprise the real vector space with basis
\begin{equation*}
% \bmatrix 2 t & x t + i u \\ x t - i u & 2 t \endbmatrix. 
\bmatrix 2 & x \\ x  & 2 \endbmatrix,  \bmatrix 0 &  i \\  - i & 0 \endbmatrix 
\end{equation*}
% where $t,u\in\R$.
and the $\eta_{y,\theta}$-invariant Hermitian forms comprise the real vector space with basis
\begin{equation*}
\bmatrix 2 & -y \sec(\theta) \\ -y \sec(\theta) & 2 \endbmatrix,
\bmatrix 0 & i - y \tan(\theta) \\ -i -y  \tan(\theta) & 0 \endbmatrix.
\end{equation*}
The $\rho$-invariant Hermitian forms thus comprise the intersection of these two 
vector spaces, the vector space with basis
\begin{equation*}
H = \bmatrix 2\sin(\theta) & x \sin(\theta) -i (y + xz/2) \\ 
x \sin(\theta) + i (y + xz/2) & 2\sin(\theta) \endbmatrix. 
\end{equation*}
This Hermitian matrix is definite since its determinant is positive:
\begin{equation*}
\det(H) = 4\sin^2(\theta) - x^2 \sin^2(\theta) - (y - xz/2)^2 =
2 - \kappa(x,y,z) > 0   
\end{equation*}
(since $\sin^2(\theta) = 1 - (z/2)^2$ and $\kappa(x,y,z)<2$).

\clearpage
\begin{figure}[t]
\centerline{\epsfxsize=1.5in %%%% \epsfbox{ribbongraph2.eps}}
\epsfbox{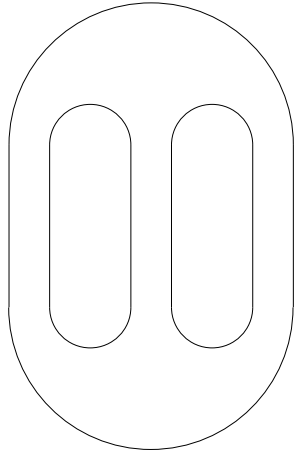}}
\caption{A ribbon graph for a three-holed sphere}
\label{fig:ribbongraph2}
\end{figure}

\begin{figure}%[h]
\centerline{\epsfxsize=1.5in %%%%\epsfbox{ribbongraph3.eps}}
\epsfbox{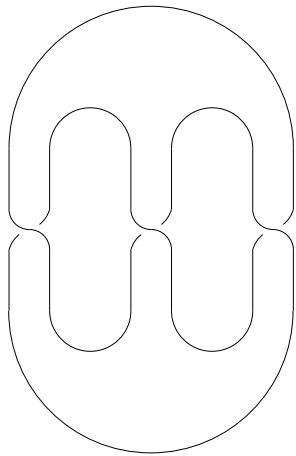}}
\caption{A ribbon graph for a one-holed torus}
\label{fig:ribbongraph3}
\end{figure}

\begin{figure}%[h]
\centerline{\epsfxsize=1.5in %%%% \epsfbox{ribbongraph5.eps}}
\epsfbox{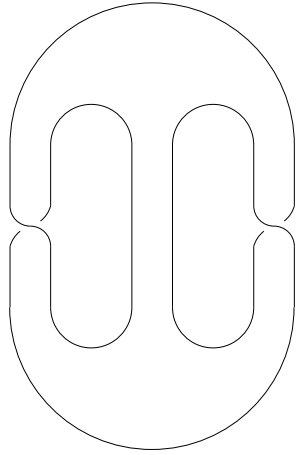}}
\caption{A ribbon graph for a two-holed cross-surface}
\label{fig:ribbongraph5}
\end{figure}

\begin{figure}%[ht]
\centerline{\epsfxsize=1.5in %%%% \epsfbox{ribbongraph4.eps}}
\epsfbox{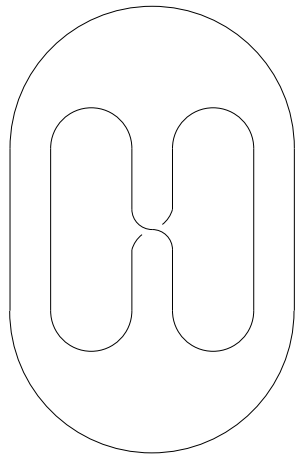}}
%\centerline{\input{ribbongraph4.pstex_t}}
\caption{A ribbon graph for a one-holed Klein bottle}
\label{fig:ribbongraph4}
\end{figure}
\index{ribbon graphs}
\clearpage
% in Xfig, export as pslatex
% combined ps/latex (both parts) in export menu
% Text Flags special flag = special (not normal)
% in edit panel, set pencolor to default

\section{Hyperbolic structures on surfaces of $\chi=-1$}
We apply this theory to compute, in trace coordinates,
the deformation spaces of hyperbolic structures on compact 
connected surfaces $\Sigma$ with $\chi(\Sigma) = -1$. 
Equivalently, such surfaces are characterized by the condition
that $\pi_1(\Sigma)$ is a free group of rank two.
There are four possibilities:
\begin{itemize}
\item $\Sigma$ is homeomorphic to a three-holed sphere (a ``pair-of-pants'' or ``trinion'')
$\Sigma_{0,3}$;
\item $\Sigma$ is homeomorphic to a one-holed torus $\Sigma_{1,1}$;
\item $\Sigma$ is homeomorphic to a one-holed Klein bottle $C_{1,1}$;
\item $\Sigma$ is homeomorphic to a two-holed projective plane $C_{0,2}$.
\end{itemize}
Each of these surfaces can be realized as a ribbon graph with
three bands connecting two $2$-cells. The number of boundary
components and the orientability can be read off from the
parities of the number of twists in the three bands.

\subsection{Fricke spaces.}
\index{Fricke spaces}
%%% explain isotopy

The {\em Fricke space\/} $\Fricke(\Sigma)$ is the space of
isotopy classes of marked hyperbolic structures on $\Sigma$ 
with $\partial\Sigma$ geodesic. 
The group of isometries of $\Ht$ equals $\PGLtR$,
which embeds in $\PSLtC$.
Its identity component $\PSLtR$ consists of the
isometries of $\Ht$ which {\em preserve an orientation\/}
on $\Ht$.
The {\em holonomy map\/} embeds
$\Fricke(\Sigma)$ in the deformation space
\begin{equation*}
\Hom\big(\pi_1(\Sigma),\PGLtR\big)//\PGLtR.
\end{equation*}
Since $\partial\Sigma\neq\emptyset$, %%% then 
$\pi_1(\Sigma)$ is a free group, and
the problem of lifting a representation of $\pi_1(\Sigma)$ to $\GLtR$ is unobstructed. 

The various lifts are permuted by the group $H^1(\Sigma;\Z/2)$,
which is isomorphic to $\Z/2\oplus\Z/2$ when $\chi(\Sigma)=-1$. 
In terms of trace coordinates on the $\R$-locus of 
the character variety this action is given by:
\begin{equation*}
\bmatrix x \\ y \\ z \endbmatrix, \; 
\bmatrix x \\ -y \\ -z \endbmatrix, \; 
\bmatrix -x \\ y \\ -z \endbmatrix, \; 
\bmatrix -x \\ -y \\ z \endbmatrix.
\end{equation*}

%%% explain notation for $\Sigma$
\begin{theorem}\label{thm:frickechione}
Using trace coordinates of the boundary, the Fricke space of the
three-holed sphere $\Sigma_{0,3}$ identifies with the quotient of the four octants %%%
\begin{align*}
(-\infty,-2] \times (-\infty,-2] \times (-\infty,-2] 
& \coprod 
(-\infty,-2] \times [2,\infty)\times [2,\infty) \\
& \coprod 
[2,\infty) \times [2,\infty) \times (-\infty,-2] \\
& \coprod 
[2,\infty) \times (-\infty,-2] \times [2,\infty)
\subset \R^3
\end{align*}
by $H^1(\Sigma,\Z/2)$.
The octant 
\begin{equation*}
(-\infty,-2] \times (-\infty,-2] \times (-\infty,-2] 
\end{equation*}
defines a slice for the $H^1(\Sigma,\Z/2)$-action.
\end{theorem}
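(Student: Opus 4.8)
The plan is to match trace coordinates against the classical length coordinates on $\Fricke(\Sigma_{0,3})$; the bulk of the work is a determination of signs. A marked hyperbolic structure on $\Sigma_{0,3}$ has a discrete faithful holonomy $\rho\colon\pi=\pi_1(\Sigma_{0,3})\to\PSLtR\subset\PSLtC$ carrying the three peripheral classes $X,Y,Z$ (with $XYZ=1$) to hyperbolic or parabolic isometries. Since $\pi\cong\F_2$ is free, lifting is unobstructed, and the lifts $\tilde\rho\colon\pi\to\SLtR$ form a torsor under $H^1(\Sigma,\Z/2)\cong\Z/2\oplus\Z/2$ acting on the trace coordinates $(x,y,z)=\big(\tr\tilde\rho(X),\tr\tilde\rho(Y),\tr\tilde\rho(Z)\big)$ by the sign changes listed before the theorem; I normalize so that $\tilde\rho(X)\tilde\rho(Y)\tilde\rho(Z)=\Id$, whence $z=\tr\tilde\rho(Z)$. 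Since each peripheral element is hyperbolic or parabolic we have $|x|,|y|,|z|\ge 2$, and a direct estimate gives $\kappa(x,y,z)>2$ on each of the four octants (note $\kappa$ is $H^1$-invariant), so Theorem~\ref{thm:realcharacters} places $\rho$ in its plane-preserving, orientation-preserving case --- exactly the Fuchsian situation. I will build a bijection onto (four octants)$/H^1$ in three steps: geometric realization, a trace--length dictionary, and the sign computation.

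First I would use the hexagon realization of Theorem~B: an irreducible Fuchsian holonomy of $\Sigma_{0,3}$ determines the right-angled hexagon in $\Ht$ whose three alternate (seam) sides are the fixed geodesics of the involutions $\iota_{\xi\eta},\iota_{\eta\zeta},\iota_{\zeta\xi}$ and whose other three sides carry the boundary geodesics of the pants. Classical hyperbolic trigonometry shows that such a hexagon exists and is unique up to isometry for any prescribed triple of alternate side lengths, and that these lengths range over $(0,\infty)^3$, extended to $[0,\infty)^3$ when boundary faces degenerate to ideal points (cusps). This identifies $\Fricke(\Sigma_{0,3})$ with length coordinates $(\ell_X,\ell_Y,\ell_Z)\in[0,\infty)^3$, where $\ell_W$ is the length of the geodesic boundary $W$ and $\ell_W=0$ signals a cusp.

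The trace--length dictionary is routine: a hyperbolic element of $\PSLtR$ of translation length $\ell$ has $\SLtR$-lifts of trace $\pm2\cosh(\ell/2)$, and a parabolic has trace $\pm2$, so $|x|=2\cosh(\ell_X/2)$ and likewise for $y,z$; since $t\mapsto2\cosh(t/2)$ is an increasing bijection $[0,\infty)\to[2,\infty)$, only the signs remain. The decisive step, which I expect to be the main obstacle, is to show that for the normalized lift the product
\begin{equation*}
P(\rho):=\tr\tilde\rho(X)\cdot\tr\tilde\rho(Y)\cdot\tr\tilde\rho(Z)
\end{equation*}
is negative. First, $P$ is genuinely $H^1$-invariant, because each generator of $H^1$ multiplies exactly two of the three traces by $-1$; hence $P$ descends to a continuous, nowhere-vanishing function on the connected space $\Fricke(\Sigma_{0,3})\cong[0,\infty)^3$, so its sign is constant. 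Evaluating at the thrice-punctured sphere (all cusps), realized by the standard parabolic generators with $\tr\tilde\rho(X)=\tr\tilde\rho(Y)=2$ and $\tr\tilde\rho(XY)=-2$, gives $(x,y,z)=(2,2,-2)$ and $P=-8<0$; therefore $P<0$ throughout.

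It remains to assemble the identification. The constraints $xyz=P<0$ and $|x|,|y|,|z|\ge2$ confine $(x,y,z)$ to precisely the four listed octants --- the sign patterns with an odd number of negative entries --- and the $H^1$-action permutes these four patterns simply transitively. Consequently every $H^1$-orbit meets the all-negative octant $(-\infty,-2]^3$ in exactly one point, which proves both that $(-\infty,-2]^3$ is a slice and that $\Fricke(\Sigma_{0,3})$ maps bijectively to (four octants)$/H^1$. Surjectivity onto $(-\infty,-2]^3$ comes from running the hexagon construction backwards: given $(a,b,c)\in(-\infty,-2]^3$ the lengths $\ell_X=2\cosh^{-1}(-a/2)$, and similarly for $b,c$, build a hyperbolic pants whose normalized lift has traces $(a,b,c)$; injectivity follows since the length triple determines the structure (uniqueness of the hexagon, i.e.\ Fricke rigidity) and the traces recover the lengths. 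The one point needing care beyond the sign computation is the behavior along the cusped faces $|x|=2$, where the hexagon acquires ideal vertices but the length--trace formula extends by continuity.
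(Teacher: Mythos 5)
Your proposal is correct, but it reaches the theorem by a genuinely different road than the paper. The paper's proof works entirely on the trace side: starting from an arbitrary triple $(x,y,z)\in(-\infty,-2]^3$ it constructs the hyperbolic structure directly, representing the axes of $X,Y,Z$ by the reflections $\widehat{X},\widehat{Y},\widehat{Z}\in\dst$ of \eqref{eq:hat}, computing $\langle \widehat{X},\widehat{Y}\rangle = (2z-xy)/\sqrt{(x^2-4)(y^2-4)} < -1$ (and cyclically) as in \eqref{eq:tracesdot}, invoking the half-plane disjointness lemma to cut out the right hexagon $\hexagon_\rho$ bounded by the common perpendiculars, and then exhibiting the explicit fundamental domain $\Delta = \hexagon \cup \iota_{XY}(\hexagon)$ together with a developing map. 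In effect the paper \emph{reproves} the existence of right-angled hexagons from the trace data, so geometric realization and surjectivity come in one self-contained package, and the sign question never arises because the construction lives in the slice $(-\infty,-2]^3$ from the outset. You instead quote the classical length parametrization $\Fricke(\Sigma_{0,3})\cong[0,\infty)^3$ (hexagon existence and uniqueness, including the degenerate ideal-vertex hexagons for the cusped faces $|x|=2$) as a black box, and your genuinely new ingredient is the sign bookkeeping: the product $P=xyz$ is invariant under all of $H^1(\Sigma,\Z/2)$ since each nontrivial element flips exactly two of the three traces, hence descends to a continuous nowhere-vanishing function on the connected Fricke space, and its sign is pinned down by the single evaluation $(x,y,z)=(2,2,-2)$, $P=-8$, at the thrice-cusped sphere. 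This cleanly explains why exactly the four odd-sign octants occur and why they form one simply transitive $H^1$-orbit of sign patterns --- a point the paper asserts implicitly (it lists the $H^1$-action and produces the slice) rather than argues. Two small points you should make explicit in a write-up: continuity of $P$ uses that $\Fricke(\Sigma_{0,3})$ is topologized by the holonomy embedding into the character variety, where traces are regular functions and lifts to $\SLtR$ can be chosen continuously in a neighborhood of any point because $\pi_1(\Sigma_{0,3})$ is free; and your backward hexagon construction must be checked to produce the marked structure with the prescribed labelling of the three boundary components, which the uniqueness of the hexagon with prescribed alternate sides supplies. What your route buys is brevity and a conceptual reason for the sign constraint; what the paper's route buys is self-containedness --- it never needs the trigonometric existence theorem, and its half-plane computation simultaneously establishes the hexagon picture of Theorem~B.
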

\noindent
The proof will be given in \S\ref{sec:subsecthreeholedsphere}. %%%%

\begin{theorem}\label{thm:frickechione2}
The Fricke space of the one-holed torus $\Sigma_{1,1}$ identifies with %%%
the quotient of 
\begin{equation*}
\kappa^{-1}\big((-\infty,-2]\big) =  \{(x,y,z)\in\R^3 \mid x^2 + y^2 + z^2 - x y z \le 0\}
\end{equation*}
by $H^1(\Sigma,\Z/2)$. The region
\begin{equation*}
\{(x,y,z)\in (2,\infty)^3 \mid x^2 + y^2 + z^2 - x y z \le 0\}
\end{equation*}
is a connected component of 
$\kappa^{-1}\big((-\infty,-2]\big)$, 
defines a slice for the $H^1(\Sigma,\Z/2)$-action,
and hence identifies with the Fricke space of $\Sigma$.
\end{theorem}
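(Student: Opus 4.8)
The plan is to pass through holonomy representations, using Theorems~\ref{thm:vogt} and~\ref{thm:realcharacters} together with the already-settled pair-of-pants case (Theorem~\ref{thm:frickechione}). Write $\pi = \pi_1(\Sigma_{1,1}) = \langle X,Y\rangle$, a free group of rank two in which the boundary $\partial\Sigma_{1,1}$ is represented by the commutator $K = [X,Y]$. For any lift of a holonomy representation to $\SLtR$, identity~\eqref{eq:commutator} gives $\tr\rho(K) = \kappa(x,y,z) = x^2+y^2+z^2-xyz-2$, so the trace-coordinate description $\kappa^{-1}\big((-\infty,-2]\big) = \{x^2+y^2+z^2-xyz\le 0\}$ is immediate. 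The real content is to single out which characters in this set are Fuchsian and to analyze the $H^1$-action.

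First I would treat the forward direction. A marked hyperbolic structure on $\Sigma_{1,1}$ with geodesic boundary has discrete faithful holonomy $\rho\colon\pi\to\PSLtR$, which lifts to $\SLtR$ since $\pi$ is free. The curves $X,Y,XY$ are non-peripheral simple closed curves, hence represented by hyperbolic isometries, so $|x|,|y|,|z|>2$; the boundary $\rho(K)$ is hyperbolic (parabolic in the cusped case), and for the holonomy lift its trace is $\le -2$, equivalently the relative Euler number is extremal (here I would invoke \cite{Topcomps}), so $\kappa(x,y,z)\le -2$. Since the four sign changes comprising $H^1(\Sigma;\Z/2)$ each reverse an even number of signs and $\kappa<2$ forces $xyz>0$, I can normalize the lift into the octant $x,y,z>0$. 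It is worth recording the elementary inequality that shapes the region: if $x,y,z>0$ and $x^2+y^2+z^2-xyz\le 0$, then $x,y,z>2$; indeed, reading $x^2-(yz)x+(y^2+z^2)\le 0$ as a quadratic in $x$, its smaller root exceeds $2$ exactly when $4(y-z)^2+16>0$, which always holds. Hence $\{x^2+y^2+z^2-xyz\le 0\}\cap\{x,y,z>0\}$ equals the claimed region, which is bounded away from the coordinate planes and is therefore a connected component of $\kappa^{-1}\big((-\infty,-2]\big)$. Injectivity of the resulting map follows from the injectivity clause of Theorem~\ref{thm:vogt} (equal characters of irreducible representations are conjugate) together with the fact that conjugate Fuchsian holonomies represent isotopic marked structures.

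The crux is the converse. Given $(x,y,z)\in(2,\infty)^3$ with $\kappa(x,y,z)\le -2$, Theorem~\ref{thm:vogt} produces a representation with these traces; since $\kappa\ne 2$ it is irreducible (Proposition~\ref{prop:irreducibility}), and by Theorem~\ref{thm:realcharacters} it is conjugate into $\SLtR$ and preserves a plane $\Ht\subset\Hth$. The hard part will be showing this representation is discrete and faithful with quotient $\Sigma_{1,1}$. I would argue geometrically: cut $\Sigma_{1,1}$ along the simple closed curve $X$ to obtain a pair of pants, realize its hyperbolic structure via Theorem~\ref{thm:frickechione} (the two cuffs coming from $X$ necessarily have equal length), and reglue with a twist, using Poincar\'e's polygon theorem (compare \cite{Marden}) to guarantee discreteness of the recombined group. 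A more self-contained route uses the ordered triple $\iota^\rho$ of Corollary~\ref{cor:factor}, represented by the Lie products of Proposition~\ref{prop:Lie}: the axes of $\xi,\eta,\xi\eta$ and their common perpendiculars bound a polygon which, precisely when $\kappa\le -2$, closes up to an embedded fundamental domain for a one-holed torus. Verifying the side-pairing hypotheses of Poincar\'e's theorem uniformly across the region is the main obstacle and the step requiring the most care.

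Finally I would assemble the global statement. The polynomial $\kappa$ is invariant under the four sign changes, which permute the four octants $\{xyz>0\}$ simply transitively; since $x^2+y^2+z^2-xyz<0$ forces $xyz>0$, the locus $\kappa^{-1}\big((-\infty,-2]\big)$ consists of four congruent components (one per octant) together with the isolated, \emph{non}-Fuchsian $\SUt$-character $(0,0,0)$, which must be discarded. Consequently the positive octant is a fundamental domain for the $H^1(\Sigma;\Z/2)$-action on the Fuchsian locus, hence a slice, and under the identifications above it is homeomorphic to $\Fricke(\Sigma_{1,1})$, completing the proof.
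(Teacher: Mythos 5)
Your overall architecture coincides with the paper's own proof --- split $\Sigma_{1,1}$ along the simple closed curve $X$, invoke the three-holed sphere theorem, reglue --- and your forward direction and octant analysis are essentially correct: the quadratic-root computation showing that $x,y,z>0$ together with $x^2+y^2+z^2-xyz\le 0$ forces $x,y,z>2$ is right, as is the observation that $\kappa<2$ forces $xyz>0$ so that the four even sign-changes permute the four components simply transitively; your remark that $(0,0,0)$ is an isolated $\SUt$-point of $\kappa^{-1}\big((-\infty,-2]\big)$ which must be set aside is a genuine subtlety correctly handled (compare Proposition~\ref{prop:axescross}). But the converse --- the heart of the theorem --- is left open in your write-up: you name the discreteness of the reglued group as ``the main obstacle and the step requiring the most care'' and propose to settle it by verifying the side-pairing hypotheses of Poincar\'e's polygon theorem ``uniformly across the region,'' which you never do. As written you have produced a representation and a strategy, not a hyperbolic structure.

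The missing idea, which is exactly how the paper closes the argument \emph{without} Poincar\'e's theorem, is to make the given representation itself the holonomy at every stage. The splitting $\Sigma' := \Sigma\,|\,X \approx \Sigma_{0,3}$ induces the monomorphism $\Pi_*$ with $X_+\mapsto X$, $X_-\mapsto YX^{-1}Y^{-1}$, $\partial'\mapsto X^{-1}YXY^{-1}$, so the three boundary traces of the restricted representation $\rho\circ\Pi_*$ are $x$, $x$ (since $\tr\,\rho(YX^{-1}Y^{-1})=\tr\,\rho(X)=x$) and $k=\kappa(x,y,z)\le -2$. After replacing $\rho(X)$ by $-\rho(X)$ --- a change of lift that moves you out of your normalized octant but leaves $\kappa$ fixed --- all three traces are $\le -2$, and Theorem~\ref{thm:frickechione} applies directly to $\rho\circ\Pi_*$: it \emph{is} the holonomy of a hyperbolic structure on $\Sigma_{0,3}$, not merely a representation with the correct cuff lengths. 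Then $\rho(Y)$ conjugates $\rho(X)^{-1}$ to $\rho(X_-)$, hence is an isometry identifying the two equal-length geodesic cuffs, and gluing geodesic boundary components by an isometry always yields a smooth hyperbolic structure; by construction its holonomy is $\rho$, so discreteness and faithfulness are consequences, not hypotheses to verify. Note also that your variant --- building an abstract pants from cuff lengths and ``regluing with a twist'' --- would face a second unaddressed step: one must tune the twist so that the resulting character recovers the prescribed $y$ and $z$, a calibration (compare the Fenchel--Nielsen formulas in the paper) that the holonomy-first formulation dissolves entirely.
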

\noindent
The proof will be given in \S\ref{sec:oneholedtorus}. %%%%

\subsection{Two-dimensional hyperbolic geometry}
We take for our model of the {\em hyperbolic plane\/} $\Ht$ the subset of $\Hth$ comprising
quaternions $z + u\jj$, where $u>0$ and $z\in\R$. A matrix
\begin{equation*}
A = \bmatrix a & b \\ c & d \endbmatrix \in \GLtC 
\end{equation*}
determines a projective transformation of $\CP^1 =\partial\Hth$ which extends to
an orientation-preserving isometry of $\Hth$.
This isometry preserves $\Ht$ if and only if 
it is a scalar multiple of a real matrix. %%%%
As usual, normalize $A\in\GLtC$ by dividing by a square root $\sqrt{\det(A)}\in\C^*$.
An element of 
\begin{equation*}
\SLtC \;\cap\; \C^*\GLtR 
\end{equation*}
 is either:
\begin{itemize}
\item a real matrix of determinant $1$, or
\item a purely imaginary matrix $i A'$ where $A'\in\GLtR$ satisfies $\det(A') = 1$. 
\end{itemize}
In the first case the corresponding orientation-preserving isometry of $\Hth$ preserves orientation
on $\Ht$, and in the second case its restriction  {\em reverses orientation\/} on $\Ht$.
The traces of their representative matrices in $\SLtC$
distinguish these cases. We emphasize that these representatives are only determined up to $\pm 1$.
Suppose that $A\in \PSLtC$ preserves a plane $P\subset\Hth$ and let $\tilde{A}\in\SLtC$ be a lift of $A$.
Then:

\begin{itemize}
\item The restriction of $A$ to $P$ preserves orientation % on $P$ 
% if and only if 
$\Longleftrightarrow$
$\tr(\tilde{A}) \in\R$;
\item The restriction of $A$ to $P$ reverses orientation % on $P$ 
%if and only if 
$\Longleftrightarrow$
$\tr(\tilde{A}) \in i\R$.
\end{itemize}
Observe that an element $A\in \SLtC$ whose trace is both real and purely imaginary --- that is, equals zero ---
is an involution in a line $\ell := \Fix(A)$. The $A$-invariant planes fall into two types:
those which contain $\ell$, upon which $A$ reverses orientation, and those which are orthogonal to $\ell$, 
upon which $A$ preserves orientation.

\subsubsection*{The hyperbolic plane and involutions of $\Hth$.}
\index{half-planes}
The proof of Theorem~\ref{thm:frickechione} 
requires an algebraic representation of half-planes in $\Ht$. 
Given an orientation on $\Ht$, and an oriented geodesic $\ell\subset\Ht$,
there is a well-determined half-plane bounded by $\ell$, defined as follows.
Let $x\in\ell$. Choose the unit vector $v_\ell$ tangent to $\ell$ at $x$ 
determined by the orientation of $\ell$. 
The choice of half-plane $\hh\subset \Ht \setminus \ell$ is determined
by the normal vector $\nu$ to $\ell$ at $x$ pointing outward from $\hh$.
Choose $\hh$ so that the basis $\{v_\ell,\nu\}\subset T_x\Ht$ is positively
oriented.

The points of $\Ht$ identify with geodesics in $\Hth$ which are orthogonal
to $\Ht$; the endpoints of such geodesics are complex-conjugate elements
of $\CP^1 \setminus \RP^1$. 
An involution which interchanges these endpoints %%%%
is given by matrices 
$\pm I_{x + \jj u}$, where the $2\times 2$ real matrix
\begin{equation}\label{eq:pointmatrix}
I_{x + \jj u} := \frac{1}{u} \bmatrix  x & -(x^2 + u^2) \\ 1 & -x \endbmatrix 
\end{equation}
has determinant $1$ and trace $0$.
(Apply \eqref{eq:involutionformula}, taking
$z_1 = x + i u$ and $z_2 = x - i u$.)
The space of such matrices has two components, depending on the signs
of the off-diagonal elements. 
A matrix 
\begin{equation*}
A\in\SLtR\cap\sltR 
\end{equation*}
equals $I_{x+ \jj u}$, for some $x + \jj u\in\Ht$
if and only if $A_{12} < 0 < A_{21}$, in which case
\begin{equation*}
u = (A_{21})^{-1},\; x = (A_{21})^{-1} A_{11}.
\end{equation*}
The above inequalities determine one of the two sheets of the
two-sheeted hyperboloid in $\sltR$ defined by the
unimodularity condition $\det(A) = 1$.

This gives a convenient form of the Klein hyperboloid model for $\Ht$,
as the quadric in $\sltR\cong\R^3$ defined by
\begin{equation*}
1 = \det(A) = -\frac12 \tr(A^2).
\end{equation*}

Next we represent oriented geodesics by involutions.
A geodesic in $\Ht$ determines an involution 
by \eqref{eq:involutionformula}
and \eqref{eq:involutionformula2}, 
where $z_1,z_2$ are distinct points in $\RP^1$.
Such a matrix is purely imaginary, has trace zero
and determinant one. Multiplying by $i$, we obtain
an element $A$ of $\sltR$ which has determinant $-1$.
Such a matrix has well-defined $1$-dimensional 
eigenspaces with eigenvalues $\pm i$. These eigenspaces
determine the respective fixed points in $\RP^1$.
Replacing $A$ by $-A$ interchanges the $\pm i$-eigenspaces.
In this way, we identify the 
space of %%%
oriented geodesics in $\Ht$ with 
\begin{equation*}
\{ A \in \sltR \mid \det(A) = -1 \} .
\end{equation*}
This is just the usual hyperboloid model. 
Think of $\sltR$ as a $3$-dimensional real
inner product space under the inner product
\begin{equation*}
\langle A, B \rangle := \frac12 \tr(AB).
\end{equation*}
The corresponding quadratic form relates to the
{\em determinant\/} by
\begin{equation*}
\langle A, A \rangle = \frac12 \tr(A^2) = -\det(A).
\end{equation*}
This quadratic form is readily seen to have signature $(2,1)$
since its value on 
\begin{equation*}
\bmatrix a & b \\ c & -a \endbmatrix
\end{equation*}
equals $ a^2 +  b c$.
Then $\Ht$ corresponds to one component of the 
two-sheeted hyperboloid (say the one with $b < 0 < c$)
\begin{equation*}
\{ v\in \sltR \mid \langle v,v\rangle = -1  \}
\end{equation*}
and the space of oriented geodesics corresponds to
{\em de Sitter space\/}
\begin{equation*}
\dst :=  \{ v\in \sltR \mid \langle v,v\rangle = 1  \}.
\end{equation*}
\index{de Sitter space}
A vector $v\in\dst$ determines a half-plane $\Hh(v)$ by:
\begin{equation}\label{eq:halfplane}
\Hh(v) := \{ w\in\Ht \mid \langle w,v\rangle \ge 0  \}.
\end{equation}
In particular, $\Hh(-v)$ is the half-plane complementary 
to $\Hh(v)$.

For example, the half-plane corresponding to
\begin{equation*}
\bmatrix 1 & 0 \\ 0 & -1 \endbmatrix 
\end{equation*}
consists of all $x + u\jj$ where $x \ge 0$, 
as can easily be verified using 
\eqref{eq:pointmatrix}.

The main criterion for disjointness of half-planes is the following lemma,
whose proof is an elementary exercise and left to the reader.
(Recall that two geodesics in $\Ht$ are {\em ultraparallel\/} if and only
if they admit a common orthogonal geodesic; equivalently distances between 
their respective points have a positive lower bound.) %%%%

\begin{lemma} Let $v_1,v_2\in\dst$ determine geodesics $\ell_1,\ell_2\subset\Ht$
and half-planes  $\Hh_i := \Hh(v_i)$ with $\partial \Hh_i = \ell_i$.
The following conditions are equivalent:
\begin{itemize} 
\item $\vert\langle v_1, v_2\rangle\vert  > 1$;
\item The invariant geodesics $\ell_1$ and $\ell_2$ are ultraparallel.
\end{itemize} 
In this case, the following two further conditions are equivalent:
\begin{itemize} 
\item $\langle v_1, v_2 \rangle  > 1$;
\item Either $\Hh_1 \subset \Hh_2$ or $\Hh_2 \subset \Hh_1$.
\end{itemize} 
Contrariwise, the following two conditions are equivalent:
\begin{itemize} 
\item $\langle v_1, v_2 \rangle  <  -1$;
\item Either $\Hh_1$ and $\Hh_2$ are disjoint or their
complements are disjoint. 
\end{itemize} 
\end{lemma}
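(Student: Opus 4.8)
The plan is to work entirely inside the Lorentzian $3$-space $(\sltR,\langle\,,\,\rangle)$ of signature $(2,1)$, where $\Ht$ is the sheet $\{\langle w,w\rangle=-1,\ \text{timelike coordinate}>0\}$, the geodesic $\ell_i$ is $\Ht\cap v_i^\perp$, and two geodesics meet orthogonally exactly when their de Sitter vectors are orthogonal. The elementary dictionary I would record first is that $\ell_1,\ell_2$ admit a common orthogonal geodesic precisely when the line $\big(\mathrm{span}(v_1,v_2)\big)^\perp$ is spacelike, a unit spanning vector then being the de Sitter vector of that common perpendicular. The top equivalence then follows from a signature count: the Gram matrix of $v_1,v_2$ is $\left(\begin{smallmatrix} 1 & c \\ c & 1\end{smallmatrix}\right)$ with $c=\langle v_1,v_2\rangle$, so the restriction of $\langle\,,\,\rangle$ to $\mathrm{span}(v_1,v_2)$ has determinant $1-c^2$. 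Since the ambient form has signature $(2,1)$, this plane is positive definite when $|c|<1$ and of type $(1,1)$ when $|c|>1$; correspondingly its orthogonal complement is a timelike line (which meets $\Ht$, so $\ell_1\cap\ell_2\neq\emptyset$) or a spacelike line (giving a common perpendicular and forcing $\ell_1\cap\ell_2=\emptyset$). Thus $|c|>1$ holds iff $\ell_1,\ell_2$ are ultraparallel, with $|c|=1$ the asymptotic borderline.

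Assuming ultraparallelism, I would put the configuration in normal form. Choosing a Lorentzian basis so that $v_1,v_2$ lie in the $(1,1)$-plane $P:=\mathrm{span}(v_1,v_2)$ and the common perpendicular is the remaining spacelike axis, the de Sitter vectors in $P$ are parametrized as $(\sinh s,\pm\cosh s)$ along the two branches of a hyperbola, and the boost fixing the common perpendicular acts by translation in the parameter $s$. The sign $\epsilon\in\{\pm1\}$ recording whether $v_2$ lies on the same branch as $v_1$ is the one invariant that survives the boost: a short computation gives $c=\cosh(s_1-s_2)$ when $\epsilon=+1$ (hence $c>1$) and $c=-\cosh(s_1+s_2)$ when $\epsilon=-1$ (hence $c<-1$). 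So the sign of $c$ detects the branch.

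Finally I would read the half-planes off each normal form. After normalizing by the boost, $\Hh(v_i)=\{w\in\Ht:\langle w,v_i\rangle\ge 0\}$ becomes a linear inequality in the spacelike coordinate, and using the positivity of the timelike coordinate on $\Ht$ one finds: in the $\epsilon=+1$ ($c>1$) case the two inequalities are nested, giving $\Hh_1\subset\Hh_2$ or $\Hh_2\subset\Hh_1$; in the $\epsilon=-1$ ($c<-1$) case they cut off opposite outer regions, so either $\Hh_1,\Hh_2$ are disjoint or their complements are. Because a nonempty proper half-plane can neither be contained in another while disjoint from it (that would force it empty) nor be contained in another while the two together fill $\Ht$ (that would force the larger one to be all of $\Ht$), the two geometric alternatives are mutually exclusive; as they exhaust the ultraparallel case from the signature count, both refined biconditionals follow in both directions.

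I expect the normal-form reduction to be the main obstacle: one must correctly track that the boost along the common perpendicular translates the parameter $s$ in \emph{opposite} directions on the two branches of spacelike unit vectors, and fix the orientation convention so that $\Hh(v)$ and $\Hh(-v)$ land on the intended sides of $\ell$. The signature count is conceptually central but routine linear algebra, and the concluding exclusivity argument is purely formal.
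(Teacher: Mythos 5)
Your proof is correct, and there is nothing in the paper to measure it against: the paper states this lemma with only the remark that its proof ``is an elementary exercise and left to the reader,'' so your argument supplies what the text omits rather than paralleling an existing proof. It is, moreover, exactly the argument the surrounding section sets up --- the inner product $\langle A,B\rangle = \frac12\tr(AB)$ on $\sltR$, the identification of $\Ht$ with a sheet of $\{\langle v,v\rangle=-1\}$ and of oriented geodesics with $\dst$, and the definition \eqref{eq:halfplane} are all in place precisely so that the lemma reduces to the Lorentzian linear algebra you carry out. Each of your three steps checks out: the Gram determinant $1-c^2$ correctly sorts the pairs into intersecting ($|c|<1$: positive-definite plane, timelike complement meeting $\Ht$ in a common point of $\ell_1,\ell_2$), asymptotic ($|c|=1$: degenerate plane, shared ideal endpoint), and ultraparallel ($|c|>1$: spacelike complement furnishing the common perpendicular, and no common point since such a point would be a timelike vector in a spacelike line); the normal form gives $c=\cosh(s_1-s_2)$ on a common branch and $c=-\cosh(s_1+s_2)$ on opposite branches, and your remark about the boost translating $s$ in opposite directions on the two branches is the right consistency check, since it is what makes $s_1-s_2$ and $s_1+s_2$ boost-invariant in the respective cases; and writing $\Hh(v_i)$ as $\epsilon_i x \ge t\tanh(s_i)$ with $t>0$ on $\Ht$ does yield nesting when $\epsilon=+1$ and, when $\epsilon=-1$, disjointness of the half-planes or of their complements according to the sign of $s_1+s_2$ --- both alternatives being covered by the lemma's disjunction. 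Two small points are worth making explicit in a write-up: the borderline equalities $c=1$ and $c=-1$ in your normal-form computation occur exactly when $v_2=v_1$ or $v_2=-v_1$, i.e.\ $\ell_1=\ell_2$, which ultraparallelism excludes, so the strict inequalities you assert are justified; and your final exclusivity argument tacitly uses that each $\Hh_i$ is a nonempty proper subset of $\Ht$ (true, since $\partial\Hh_i=\ell_i\neq\emptyset$), without which ``nested'' and ``complements disjoint'' would not be incompatible. With those remarks, the trichotomy $c>1$, $c<-1$ (given $|c|>1$) together with the mutual exclusivity of the two geometric alternatives converts your one-directional normal-form computations into the stated biconditionals, as you say.
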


\subsubsection*{Hyperbolic isometries.}
\index{hyperbolic isometries}
An element $A\in\SLtR$ is {\em hyperbolic\/} if it satisfies
any of the following equivalent conditions:
\begin{itemize}
\item $\tr(A) > 2$  or $\tr(A) < -2$;
\item $A$ has distinct real eigenvalues;
\item The isometry of $\Ht$ defined by $A$ has exactly two fixed points
on $\partial \Ht$;
\item The isometry of $\Ht$ defined by $A$ leaves invariant a 
(necessarily unique) geodesic $\ell_A$, upon which it acts by 
a nontrivial translation.
\end{itemize}
A geodesic $\ell$ in $\Ht$ is specified by its {\em reflection
$\rho_\ell$,\/} an isometry of $\Ht$ whose fixed point set equals
$\ell$. If $v\in\dst$ is a vector corresponding to $\ell$, then
$\rho_\ell$ is the restriction to $\Ht$ of the orthogonal reflection in
$\SOto$ fixing $v$:
\begin{equation*}
u \stackrel{\rho_v}\longmapsto  - u + 2 \langle u,v\rangle v. 
\end{equation*}
\eqref{eq:invformula} implies
the following useful formula for the invariant axis of a hyperbolic element:
\begin{lemma}
Let $A$ be hyperbolic. Then 
\begin{equation}\label{eq:hat}
\widehat{ A } \; := \;  
\frac{2 A - \tr(A) \Id}{\sqrt{\tr(A)^2 - 4}} 
\;\in\; \widetilde{\Inv}\cap\SLtR %%%
\end{equation}
defines the reflection in the invariant axis of $A$.
\end{lemma}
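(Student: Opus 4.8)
The plan is to verify the three defining conditions on $\widehat A$ by direct computation and then to read off the geometry from the centralizing-involution formula \eqref{eq:invformula}. Write $\widehat A = \frac{2}{\sqrt{\tr(A)^2-4}}\big(A - \tfrac12\tr(A)\Id\big)$, exhibiting $\widehat A$ as a scalar multiple of the traceless projection of $A$, so that the whole statement reduces to computing a trace and a determinant and then recognizing the induced isometry.

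First I would check realness and tracelessness, which are immediate: since $A\in\SLtR$ is hyperbolic, $\tr(A)\in\R$ with $\tr(A)^2>4$, so $\sqrt{\tr(A)^2-4}\in\R$ and $\widehat A$ is a real matrix; and $\tr\big(2A-\tr(A)\Id\big)=2\tr(A)-2\tr(A)=0$, so $\tr(\widehat A)=0$. Next I would compute the determinant from the $2\times 2$ identity $\det\big(2A-\tr(A)\Id\big)=4\det(A)-\tr(A)^2=4-\tr(A)^2$, a consequence of Cayley--Hamilton \eqref{eq:CH}, giving $\det(\widehat A)=\frac{4-\tr(A)^2}{\tr(A)^2-4}=-1$. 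Thus $\langle\widehat A,\widehat A\rangle=-\det(\widehat A)=1$, so $\widehat A$ lies on the de Sitter sheet $\dst\subset\sltR$ of real traceless matrices of determinant $-1$, which was identified above with the oriented geodesics of $\Ht$; equivalently $i\widehat A\in\widetilde\Inv$, matching \eqref{eq:invformula} up to the scalar $i$ forced by the hyperbolic sign $\tr(A)^2-4>0$.

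It remains to identify the geodesic and the isometry. Since $\widehat A$ is a degree-one polynomial in $A$, it commutes with $A$, hence preserves the invariant axis $\ell_A$ and fixes $\Fix(A)\subset\CP^1$. By \eqref{eq:invformula}, $i\widehat A=\pm\widetilde{\iota_A}$ is precisely the involution centralizing $A$, i.e. the order-two rotation of $\Hth$ about $\ell_A$; because scalars act trivially on $\Hth$, the real matrix $\widehat A$ induces the same isometry. Restricting this rotation to the $A$-invariant plane $\Ht\supset\ell_A$ yields the reflection of $\Ht$ fixing $\ell_A$. Equivalently, viewing $\widehat A\in\dst$ as the de Sitter vector of the oriented axis, the orthogonal reflection $\rho_{\widehat A}\colon u\mapsto -u+2\langle u,\widehat A\rangle\widehat A$ in $\SOto$ is exactly reflection in $\ell_A$. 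To pin down that the represented geodesic is $\ell_A$ and not another, I would check the diagonal model $A=\mathrm{diag}(\lambda,\lambda^{-1})$, for which $\widehat A=\mathrm{diag}(1,-1)$ is the de Sitter vector already shown above to correspond to the geodesic $\{x=0\}=\ell_A$.

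The step I expect to be the main obstacle is the sign bookkeeping around the determinant and the scalar $i$: the hyperbolic normalization $\sqrt{\tr(A)^2-4}$ (real) in place of the elliptic $\sqrt{4-\tr(A)^2}$ (imaginary) forces $\det(\widehat A)=-1$, so $\widehat A$ is genuinely an orientation-reversing avatar living on $\dst$ with $i\widehat A\in\widetilde\Inv$. Keeping the three incarnations consistent --- the $\widetilde\Inv$-involution $\widetilde{\iota_A}=\mp i\,\widehat A$ acting on $\Hth$, the de Sitter vector $\widehat A$ representing $\ell_A$, and the planar reflection $\rho_{\widehat A}\in\SOto$ --- is where the care is required.
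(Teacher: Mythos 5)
Your argument is correct and is essentially the paper's own: the paper obtains \eqref{eq:hat} simply by specializing the centralizing-involution formula \eqref{eq:invformula} to a real hyperbolic $A$ and confirming it on the diagonal example, which is exactly your route. Your sign bookkeeping is also right --- since $\det(\widehat{A})=-1$, the matrix $\widehat{A}$ lies in $\dst$ rather than literally in $\widetilde{\Inv}\cap\SLtR$ (it is $i\widehat{A}$ that lies in $\widetilde{\Inv}$), a slip in the statement that the paper's own example $\widehat{A}=\operatorname{diag}(1,-1)$ and its subsequent use of $\widehat{A}$ as a de Sitter vector defining half-planes both corroborate.
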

\noindent
Notice that 
\begin{equation*}
\widehat{-A} \; = \; \widehat{A^{-1}} \; = \;   -\widehat{A}
\end{equation*}
so that $A$ and $A^{-1}$ determine
complementary half-planes.

For example  
\begin{equation*}
A =  \bmatrix e^{l/2} & 0 \\ 0 & e^{-l/2} \endbmatrix 
\end{equation*}
represents translation along a geodesic (the imaginary axis in
$\Ht$) by distance $l > 0$ from $0$ to $\infty$. 
The corresponding reflection is
\begin{equation*}
\widehat{A} =   \bmatrix 1 & 0 \\ 0 & -1 \endbmatrix.
\end{equation*}
which determines the half-plane:
\begin{equation*}
\Hh(\widehat{A}) = \{ x + u \jj \in\Ht \mid  x\ge 0, u > 0\}
\end{equation*}
as above.

\subsection{The three-holed sphere.}\label{sec:subsecthreeholedsphere}
\index{Fricke space of three-holed sphere}
We now show that a representation corresponding to a character
$(x,y,z)\in \R^3$ satisfying $x, y, z< -2$
is the holonomy representation of a hyperbolic structure on a 
three-holed sphere. 
We find matrices $X,Y,Z$ of the desired type and compute the corresponding
reflections $\widehat{X},\widehat{Y},\widehat{Z}$. Then we show that
the corresponding half-planes are all disjoint (after possibly replacing
$\widehat{X},\widehat{Y},\widehat{Z}$ by their negatives).
From this we construct a developing map for a hyperbolic structure
on $\Sigma$.
For details on geometric structures on manifolds and their
developing maps, see
Goldman~\cite{geost,puncturedtorus,PGOM} or Thurston~\cite{Thurston}.

\begin{lemma}
Suppose $X,Y,Z\in\SLtC$ satisfy $X Y Z = \Id$ and have real traces
$x,y,z<-2$ respectively.
Then the inner  products
\begin{equation*}
\langle \widehat{X}, \widehat{Y}\rangle, 
\langle \widehat{Y}, \widehat{Z}\rangle, 
\langle \widehat{Z}, \widehat{X}\rangle 
\end{equation*}
are all $< -1$.
\end{lemma}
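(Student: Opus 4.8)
The plan is to reduce all three assertions to the single polynomial inequality $\kappa(x,y,z) > 2$, which for $x,y,z < -2$ is elementary, and to exploit the cyclic symmetry $X\to Y\to Z\to X$ coming from the relation $XYZ=\Id$ so that only $\langle \widehat{X},\widehat{Y}\rangle$ needs genuine computation.

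First I would evaluate $\langle \widehat{X},\widehat{Y}\rangle$ formally. Since $x,y<-2$, the quantities $x^2-4$ and $y^2-4$ are positive, so the positive square roots in \eqref{eq:hat} are legitimate and $\widehat{X},\widehat{Y}$ are well-defined traceless matrices. Expanding the inner product $\langle A,B\rangle=\tfrac12\tr(AB)$ gives
\[
\langle \widehat{X},\widehat{Y}\rangle = \frac{1}{2}\tr(\widehat{X}\widehat{Y}) = \frac{\tr\big((2X - x\Id)(2Y - y\Id)\big)}{2\sqrt{x^2-4}\,\sqrt{y^2-4}}.
\]
Using $\tr(X)=x$, $\tr(Y)=y$, and $\tr(XY)=\tr(Z^{-1})=z$ — the last from $XY=Z^{-1}$ together with \eqref{eq:inverse} — the numerator collapses to $4z-2xy$, so that
\[
\langle \widehat{X},\widehat{Y}\rangle = \frac{2z - xy}{\sqrt{x^2-4}\,\sqrt{y^2-4}}.
\]

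Next I would verify the inequality. Because $x,y,z<-2$ we have $xy>4$ and $-2z>4$, so the numerator $2z-xy$ is negative while the denominator is positive; hence the ratio is negative, and the target $\langle \widehat{X},\widehat{Y}\rangle<-1$ is equivalent to $xy-2z>\sqrt{(x^2-4)(y^2-4)}$, a comparison of two positive quantities. Squaring (reversible here) and cancelling the common $x^2y^2$ reduces this to $x^2+y^2+z^2-xyz-4>0$, that is, to $\kappa(x,y,z)>2$ by \eqref{eq:commutator}. Writing $x=-a,\ y=-b,\ z=-c$ with $a,b,c>2$ turns $\kappa+2$ into $a^2+b^2+c^2+abc>20$, so $\kappa(x,y,z)>2$ as required. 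Replacing $(X,Y)$ by $(Y,Z)$ and by $(Z,X)$, and using $\tr(YZ)=\tr(X^{-1})=x$ and $\tr(ZX)=\tr(Y^{-1})=y$, produces the analogous formulas for $\langle \widehat{Y},\widehat{Z}\rangle$ and $\langle \widehat{Z},\widehat{X}\rangle$, each of which reduces to the identical condition $\kappa>2$.

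There is no deep obstacle: the argument is a direct trace computation. The only step requiring care is the sign bookkeeping in the squaring reduction — one must confirm that \emph{both} sides of $xy-2z>\sqrt{(x^2-4)(y^2-4)}$ are positive before squaring, since otherwise the equivalence fails. This is precisely where the hypotheses $x,y,z<-2$ (and not merely $\kappa>2$) are used, and it is also what upgrades the conclusion from the weaker $|\langle \widehat{X},\widehat{Y}\rangle|>1$ (ultraparallel axes) to the sharp sign statement $\langle \widehat{X},\widehat{Y}\rangle<-1$ needed for the disjointness of half-planes.
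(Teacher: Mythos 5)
Your proposal is correct and takes essentially the same route as the paper's proof: the identical trace computation giving $\langle \widehat{X},\widehat{Y}\rangle = (2z-xy)/\sqrt{(x^2-4)(y^2-4)}$, the same sign observation that $x,y,z<-2$ forces the numerator negative, and the same squaring identity $(2z-xy)^2-(x^2-4)(y^2-4)=4(x^2+y^2+z^2-xyz-4)>0$, with the cyclic symmetry (via $\tr(YZ)=x$, $\tr(ZX)=y$) disposing of the other two pairings. The only cosmetic difference is your substitution $x=-a$, $y=-b$, $z=-c$ to verify $\kappa(x,y,z)>2$, where the paper bounds the expression directly.
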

\begin{proof}
The proof breaks into a series of calculations. 
By symmetry it suffices to prove
$\langle \widehat{X}, \widehat{Y}\rangle < -1$. 
By the definition  \eqref{eq:hat}
\begin{align}
\langle \widehat{X}, \widehat{Y}\rangle  & \;=\;  \frac12
\tr\bigg( \frac{2 X - x\Id}{\sqrt{x^2-4}}\; \frac{2 Y - y\Id}{\sqrt{y^2-4}}\bigg)
\notag \\
& \; = \;  \frac{\tr\big(\, 4 X Y - 2 x Y - 2 y X + x y \Id\big)}
{2\sqrt{(x^2-4)(y^2-4)}}  \notag\\
& \; = \; 
\frac{2 z - x y}{\sqrt{(x^2-4)(y^2-4)}} \label{eq:tracesdot}
\end{align}
since $\tr(XY)= z$, $\tr(Y)= y$, $\tr(X) = x$ and $\tr(\Id) = 2$.
Because 
\begin{equation*}
x,y,z< -2 \;\Longrightarrow\; 2 z - x y < 0, 
\end{equation*}
the calculation above implies
\begin{equation}\label{eq:dotneg}
\langle \widehat{X}, \widehat{Y}\rangle  < 0.
\end{equation}
Now
\begin{equation*}
x^2 + y^2 + z^2 - x y z - 4 \;>\; 4 + 4 + 4 - 8 -4 \;=\;  0 
\end{equation*}
implies that 
\begin{equation*}
(2 z - x y)^2 - (x^2 -4)(y^2 -4) \;=\; 4 (x^2 + y^2 + z^2 - x y z - 4)  \;>\; 0  
\end{equation*}
and
\begin{equation}\label{tzmxy}
\bigg(\frac{2 z - x y}{\sqrt{(x^2-4)(y^2-4)}}\bigg)^2 > 1.
\end{equation}
Thus \eqref{eq:tracesdot} and  \eqref{tzmxy} imply
\begin{equation*}
\langle \widehat{X}, \widehat{Y}\rangle^2 > 1
\end{equation*}
whence \eqref{eq:dotneg} implies:
\begin{equation*}
\langle \widehat{X}, \widehat{Y}\rangle <  -1 
\end{equation*}
as claimed.
\end{proof}

\begin{proof}[Conclusion of proof of Theorem~\ref{thm:frickechione}]

Thus the half-planes $\Hx,\Hy,\Hz$ are either all disjoint or their complements
are all disjoint. Replacing 
$\widehat{X},\widehat{Y},\widehat{Z}$ by their negatives if necessary,
assume that the complements to
$\Hx,\Hy,\Hz$ are pairwise disjoint.

The intersection
\begin{equation*}
\Delta_\infty := \Hx \cap \Hy \cap \Hz
\end{equation*}
is bounded by the three geodesics 
\begin{equation*}
\ell_X = \partial\Hx, \quad
\ell_Y = \partial\Hy, \quad
\ell_Z = \partial\Hz 
\end{equation*}
and three segments of $\partial\Ht$. 
When some of $\rho(X),\rho(Y),\rho(Z)$
are parabolic, then these segments degenerate into ideal points.
If $a,b$ are lines or ideal points, denote their common orthogonal
segment by $\perp(a,b)$. Define:
\begin{align*}
\sigma_{XY} &:= \perp(\ell_X,\ell_Y) \\  
\sigma_{YZ} &:= \perp(\ell_Y,\ell_Z) \\  
\sigma_{ZX} &:= \perp(\ell_Z,\ell_X). 
\end{align*}
Let $\hexagon_\rho\subset\Delta_\infty$ 
denote the right hexagon bounded by
$\sigma_{XY}, \sigma_{YZ}, \sigma_{ZX}$ 
and segments of 
$\ell_X,\ell_Y,\ell_Z$ as in
Figure~\ref{fig:pantsdomain}. 
Map the abstract hexagon $\hexagon$ of \S\ref{sec:hex}
to $\hexagon_\rho$ so that 
\begin{align*}
\partial_1(\hexagon)  &\longmapsto \ell_X \\
\partial_2(\hexagon)  &\longmapsto \ell_Y \\
\partial_3(\hexagon)  &\longmapsto \ell_Z 
\end{align*}
and the other three edges of $\partial\hexagon$
map homeomorphically 
to $\sigma_{XY}, \sigma_{YZ}, \sigma_{ZX}$ 
respectively. This mapping embeds $\hexagon$ into $\Ht$.

A fundamental domain for the action of 
\begin{equation*}
\pi_1(\Sigma) = \langle X,Y,Z \mid X Y Z = 1\rangle 
\end{equation*}
on the universal covering surface
\begin{equation*}
\tilde\Sigma := \bigg(\hexagon \times \hat\pi\bigg)/\sim
\end{equation*}
is the union 
\begin{equation*}
\Delta := \hexagon \cup \iota_{XY}(\hexagon).  %%%
\end{equation*}
We shall extend the 
embedding $\hexagon\hookrightarrow\Ht$ %%%%
to a local diffeomorphism (a {\em developing map\/})%%%% 
\begin{equation*}
\tilde\Sigma \longrightarrow \Ht
\end{equation*}
which is $\pi_1(\Sigma)$-equivariant with respect to the
action $\rho$ on $\Ht$. Pull back the hyperbolic structure
from $\Ht$ to obtain a $\pi_1(\Sigma)$-invariant hyperbolic
structure on $\Sigma$, as desired.

Extend $\hexagon\hookrightarrow\Ht$
to $\Delta\longrightarrow\Ht$ as follows.
Map the reflected image of $\hexagon$ to
$\iota_{XY}\hexagon_\rho$.  %%%%
Then 
\begin{equation*}
X = \iota_{ZX}\iota_{XY} 
\end{equation*}
identifies
the two sides of $\Delta$ corresponding to
$\sigma_{ZX}$ and $\iota_{XY}\sigma_{ZX}$,
and
\begin{equation*}
Y = \iota_{XY}\iota_{YZ} 
\end{equation*}
identifies the two sides of $\Delta$ corresponding to
$\iota_{XY}\sigma_{YZ}$ and $\sigma_{YZ}$.
(Compare Figure~\ref{fig:pantsdomain}.)
This defines a hyperbolic structure on $\Sigma$
with geodesic boundary developing to $\ell_X$,
$\ell_Y$ and $\ell_Z$.

This completes the proof that every character
$(x,y,z)\in \big(-\infty,2]^3$ is the holonomy
of a hyperbolic structure on $\Sigma_{0,3}$.
%%% say something about the ends

\end{proof}

\begin{figure}[h]
\centerline{\epsfxsize=2.5in \epsfbox{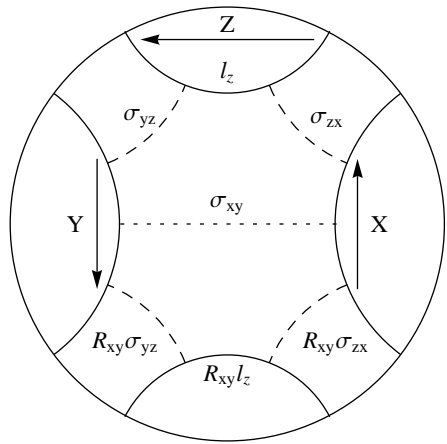}}
\caption{Fundamental domain for hyperbolic structure on $\Sigma_{0,3}$.}
\label{fig:pantsdomain}
\end{figure}
 
\subsection{The one-holed torus.} \label{sec:oneholedtorus} %%%%
\index{Fricke space of one-holed torus}
Now consider the case $\Sigma \approx \Sigma_{1,1}$.
Present $\pi=\pi_1(\Sigma)$ as freely generated by $X,Y$
corresponding to simple closed curves which intersect
transversely in one point. Then the boundary $\partial \Sigma$
corresponds to the commutator $K = [X,Y]$ and we obtain the presentation
\begin{equation*}
\pi \;=\; \langle X,Y,Z,K \;\mid\; X Y Z = \Id, \quad X Y  = K Y X \rangle
\end{equation*}
The corresponding trace functions are
\begin{align*}
x([\rho]) & :=\tr\big(\rho(X)\big) \\
y([\rho]) & :=\tr\big(\rho(Y)\big) \\
z([\rho]) & :=\tr\big(\rho(Z)\big) \\
k([\rho]) & :=\tr\big(\rho(K)\big)  \\ 
& = \kappa(x,y,z) = x^2 + y^2 + z^2 - x y z -2,
\end{align*}
which we denote by $x,y,z,k$ (without reference to $\rho$) when the context is clear.

The goal of this section is to prove Theorem~\ref{thm:frickechione2}.

\begin{lemma}\label{lem:biggerthantwo}
Suppose $(x,y,z)\in\R^3$ satisfies 
$x^2 + y^2 + z^2 - x y z < 4$. 
Then either:
\begin{itemize} \item $(x,y,z)\in[-2,2]^3$ 
or \item 
$\vert x\vert, \vert y\vert, \vert z\vert > 2.$ 
\end{itemize}
\end{lemma}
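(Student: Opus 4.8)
The plan is to exploit the full symmetry of the polynomial $f(x,y,z) := x^2 + y^2 + z^2 - xyz$ under permutations of its arguments, and to argue by contradiction. Suppose the conclusion fails. Then $(x,y,z)$ lies neither in $[-2,2]^3$ nor in the region where all three coordinates exceed $2$ in absolute value; hence at least one coordinate has absolute value $>2$, while at least one (necessarily different) coordinate has absolute value $\le 2$. Since $f$ is symmetric, I may relabel the coordinates so that $|x|\le 2$ and $|z|>2$, leaving $y$ as the remaining free variable.

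Next I would regard $f$ as a quadratic polynomial in the single variable $y$, with $x$ and $z$ held at their actual values:
\[
f(x,y,z) = y^2 - (xz)\,y + (x^2 + z^2).
\]
Completing the square shows that, as $y$ ranges over $\R$, the minimum value of this expression is $x^2 + z^2 - \tfrac14 x^2 z^2$, attained at $y = xz/2$. The decisive computation is the factorization
\[
\Big(x^2 + z^2 - \tfrac14 x^2 z^2\Big) - 4 \;=\; -\tfrac14\,(x^2 - 4)(z^2 - 4),
\]
which I would verify by expanding the right-hand side.

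With this identity in hand, the sign analysis is immediate: the hypothesis $|x|\le 2$ gives $x^2 - 4 \le 0$, while $|z|>2$ gives $z^2 - 4 > 0$, so the product $(x^2-4)(z^2-4)$ is $\le 0$ and its negative multiple is $\ge 0$. Therefore the minimum over $y$ of $f$ is at least $4$, and in particular the actual value satisfies $f(x,y,z)\ge 4$. This contradicts the standing hypothesis $x^2 + y^2 + z^2 - xyz < 4$, completing the argument.

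There is no serious obstacle here; the one point requiring care is the reduction step, namely checking that the failure of the dichotomy genuinely produces one coordinate of absolute value $\le 2$ together with a \emph{distinct} coordinate of absolute value $>2$, so that the symmetric relabelling placing them in the $x$- and $z$-slots is legitimate. Once that is granted, the proof collapses to the single quadratic minimization above, and the heart of the matter is the clean factorization of $\min_y f - 4$ as $-\tfrac14(x^2-4)(z^2-4)$.
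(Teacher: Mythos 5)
Your proof is correct, and it is essentially the paper's argument run in contrapositive form: your completing-the-square identity $\bigl(x^2+z^2-\tfrac14 x^2z^2\bigr)-4=-\tfrac14(x^2-4)(z^2-4)$ is, up to permuting the variables, exactly the paper's rewriting of the hypothesis as $(x^2-4)(y^2-4)>(2z-xy)^2$. The only organizational difference is that the paper argues directly --- by symmetry all three pairwise products $(x^2-4)(y^2-4)$, $(y^2-4)(z^2-4)$, $(z^2-4)(x^2-4)$ are positive, so the three factors share a sign --- whereas you relabel coordinates and extract a contradiction from a single pair.
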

\noindent
In the first case $(x,y,z)$ is the character of an 
$\SUt$-representation as in Theorem~\ref{thm:realcharacters}.
\begin{proof}
Rewriting the hypothesis as
\begin{equation*}
(x^2 -4) (y^2 -4) > (2 z - xy)^2, 
\end{equation*}
it follows that $(x^2 -4) (y^2 -4) > 0$.
By symmetry $(y^2 -4) (z^2 -4) > 0$ and $(z^2 -4) (x^2 -4) > 0$
as well. 
Thus none of $\vert x\vert$, $\vert y\vert$, $\vert z\vert$ equal
$2$, and either %%%% they 
\begin{equation*}
x^2 -4, \quad y^2 -4, \quad z^2 -4 
\end{equation*}
%%%%
are all positive or all negative.
If 
\begin{equation*}
\vert x\vert, \vert y\vert, \vert z\vert < 2 
\end{equation*}
then $(x,y,z)$ is an $\SUt$-character
as in Theorem~\ref{thm:realcharacters};
otherwise 
\begin{equation*}
\vert x\vert, \vert y\vert, \vert z\vert > 2
\end{equation*}
as desired.
This completes the proof of Lemma~\ref{lem:biggerthantwo}.
\end{proof}

\begin{figure}[h]
\centerline{\epsfxsize=2.5in %%%% \epsfbox{oneholedtorus.eps}}
\epsfbox{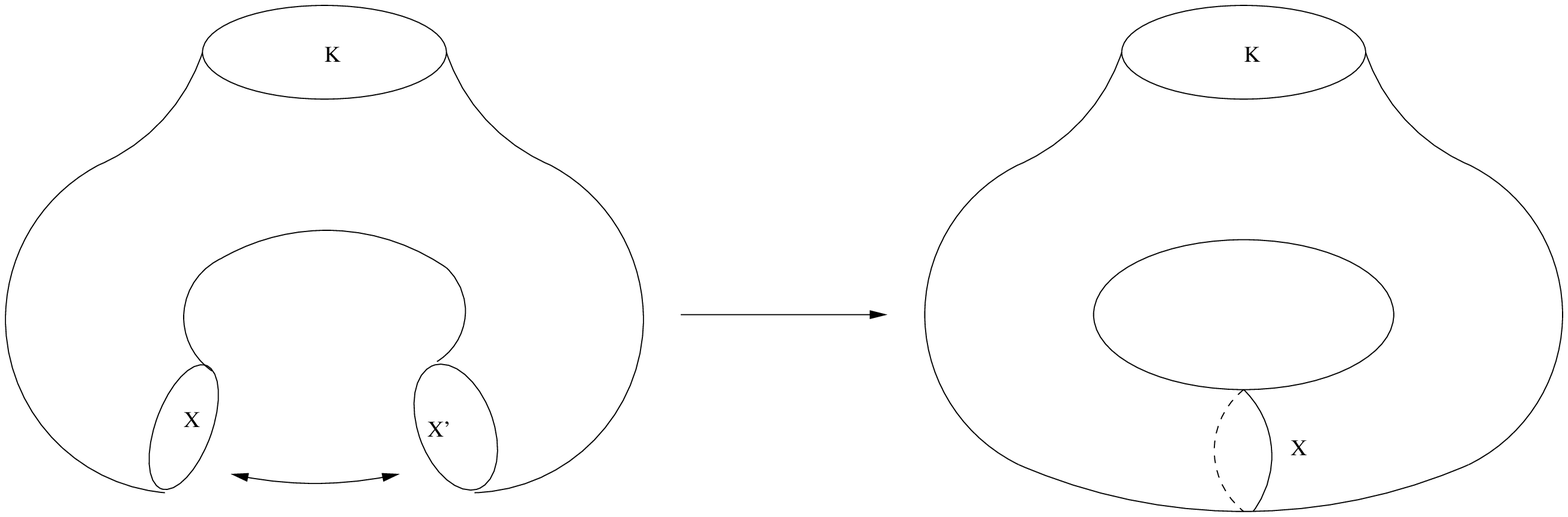}}
\caption{The one-holed torus as an identification space.
The identification map $Y$ conjugates $X$ to a boundary element
of $\pi_1(\Sigma')$, but with the opposite orientation.}
\label{fig:oneholedtorus}
\end{figure}

Denote by $X\subset\Sigma$ %%%be 
a simple closed curve corresponding to the 
generator $X\in\pi_1(\Sigma)$. 
The surface-with-boundary $\Sigma' := \Sigma | X$ 
obtained by {\em splitting\/} $\Sigma$
along $X$ is homeomorphic to a three-holed sphere.
Denoting the quotient map by $\Sigma'\xrightarrow{\Pi}\Sigma$,
the three components of $\partial\Sigma'$ are the connected
preimage $\partial' := \Pi^{-1}(\partial\Sigma)$ and the two components
$X_\pm$ of the preimage  %%% $\Pi^{-1}(\xi)$. 
$\Pi^{-1}(X)$. %%%
Choose 
arcs from the basepoint to $X_+$  and represent the
boundary generators of $\pi_1(\Sigma')$ by the elements
$\partial',X_+,X_-$ subject to the relation
$X_-X_+\partial' = \Id$.
The quotient map induces a monomorphism
\begin{align*}
\pi_1(\Sigma') & \stackrel{\Pi_*}\hookrightarrow \pi_1(\Sigma) \\
X_+  & \longmapsto X \\
X_- & \longmapsto Y X^{-1} Y ^{-1} \\
\partial' & \longmapsto \partial = X^{-1} Y X Y^{-1}.
\end{align*}
Compare Figure~\ref{fig:oneholedtorus}.

\begin{lemma}%\label{lem:}
The composition $\rho\circ\Pi_*$ is the holonomy
representation of a hyperbolic structure on
$\Sigma'\approx\Sigma_{0,3}$.
\end{lemma}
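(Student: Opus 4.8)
The plan is to reduce the statement to Theorem~\ref{thm:frickechione}, the three-holed-sphere case, by computing the three boundary traces of $\rho\circ\Pi_*$ and checking that the resulting triple lands in one of the four octants appearing there. Since $\pi_1(\Sigma')=\langle X_-,X_+,\partial'\mid X_-X_+\partial'=\Id\rangle$ and $\Pi_*$ carries these peripheral generators to $X$, $YX^{-1}Y^{-1}$, and $X^{-1}YXY^{-1}$ respectively, the three boundary traces are $\tr\rho(X)$, $\tr\rho(YX^{-1}Y^{-1})$, and $\tr\rho(X^{-1}YXY^{-1})$.

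The first two are immediate: $\tr\rho(X)=x$, while $YX^{-1}Y^{-1}$ is conjugate to $X^{-1}$, whose trace is again $x$ by \eqref{eq:inverse}. For the third I would recognize $X^{-1}YXY^{-1}=[X^{-1},Y]$ and apply the commutator trace formula \eqref{eq:commutator} to the pair $(X^{-1},Y)$: with $\tr\rho(X^{-1})=x$, $\tr\rho(Y)=y$, and $\tr\rho(X^{-1}Y)=xy-z$ (from the Basic Identity \eqref{eq:basic}), a one-line expansion shows $\kappa(x,y,xy-z)=\kappa(x,y,z)=k$. Thus the boundary trace triple of $\rho\circ\Pi_*$ is $(x,x,k)$. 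This is geometrically transparent: cutting $\Sigma_{1,1}$ along $X$ produces two new boundary curves, both copies of $X$ and hence both of trace $x$, while the third boundary is the original $\partial\Sigma$, of trace $k=\kappa(x,y,z)$.

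By the standing hypothesis $(x,y,z)$ lies in the Fricke slice $(2,\infty)^3\cap\{x^2+y^2+z^2-xyz\le 0\}$ of Theorem~\ref{thm:frickechione2}, so $x>2$ and $k\le -2$; hence $(x,x,k)$ lies in the octant $[2,\infty)\times[2,\infty)\times(-\infty,-2]$, which is precisely one of the four octants of Theorem~\ref{thm:frickechione}. Moreover $\rho\circ\Pi_*$ is irreducible, since its commutator trace $\kappa(x,x,k)=2x^2+k^2-x^2k-2$ is manifestly greater than $2$ when $x>2$ and $k\le -2$, so in particular $\neq 2$ (Proposition~\ref{prop:irreducibility}). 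Theorem~\ref{thm:frickechione} then realizes the triple $(x,x,k)$ as the boundary-trace data of a hyperbolic structure on $\Sigma_{0,3}$, which is the assertion of the lemma.

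The step demanding the most care is upgrading the coincidence of characters into the statement that $\rho\circ\Pi_*$ is \emph{itself} a holonomy, since Theorem~\ref{thm:frickechione} parametrizes hyperbolic structures by boundary traces. Here I would use that $\rho$ is already an $\SLtR$-representation: its torus character $(x,y,z)$ lies outside $[-2,2]^3$, so by Theorem~\ref{thm:realcharacters} it preserves a plane $\Ht\subset\Hth$, and therefore so does the restriction $\rho\circ\Pi_*$. Being real, irreducible, and of the prescribed character, $\rho\circ\Pi_*$ is by the injectivity clause of Theorem~A conjugate to the explicit real holonomy constructed in the proof of Theorem~\ref{thm:frickechione}; and since the octant $[2,\infty)\times[2,\infty)\times(-\infty,-2]$ is carried to the standard octant $(-\infty,-2]^3$ by an element of the $H^1(\Sigma';\Z/2)$-action, which changes only the $\SLtR$-lift and not the underlying $\PGLtR$-representation, this conjugacy is realized inside $\PGLtR$ and identifies $\rho\circ\Pi_*$ with the desired holonomy.
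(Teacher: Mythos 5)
Your proposal is correct and takes essentially the same route as the paper: you compute the boundary traces of $\rho\circ\Pi_*$ to be $(x,x,k)$ and reduce to Theorem~\ref{thm:frickechione}, handling the sign of $x$ through the $H^1(\Sigma;\Z/2)$ octant action where the paper simply replaces $\rho(X)$ by $-\rho(X)$ to place all three traces in $(-\infty,-2]$. Your final paragraph, which upgrades equality of characters to an actual $\PGLtR$-conjugacy with the constructed holonomy, spells out a step the paper leaves implicit but does not constitute a different argument.
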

\begin{proof}
By Theorem~\ref{thm:frickechione}, it suffices to show
that the images of the boundary generators 
$\partial',X_-,X_+\in\pi_1(\Sigma')$  under
$\rho\circ\Pi_*$ have trace $\le -2$.
By Lemma~\ref{lem:biggerthantwo},
\begin{equation*}
\tr\big(\Pi_*\circ\rho(X_-)\big) =
\tr\big(\Pi_*\circ\rho(X_+)\big) = x 
\end{equation*}
is either $>2$ or $<-2$. In the former case,
replace $\rho(X)$ by $-\rho(X)$ to assume that
$x< -2$. Now by assumption
\begin{equation*}
\tr\big(\Pi_*\circ\rho(\partial')\big) =
\tr\big(\rho(K)\big) = k \le - 2 
\end{equation*}
so that all three boundary generators
of $\pi_1(\Sigma')$ have trace $\le -2$,
as desired.
\end{proof}
\begin{proof}[Conclusion of proof of Theorem~\ref{thm:frickechione2}]
Thus we obtain a hyperbolic structure on $\Sigma'$
with geodesic boundary. Choosing a developing map
with holonomy $\Pi_*\circ\rho$, the isometry $\rho(Y)$
realizes the identification of $\xi_-$ with $\xi_+$ for
which $\Pi$ is the quotient map.
The resulting quotient space is homeomorphic to $\Sigma$
and inherits a hyperbolic structure from the one on $\Sigma'$
and the identification. Therefore $\rho$ is the holonomy
representation of a hyperbolic structure on $\Sigma$.
This concludes the proof of
Theorem~\ref{thm:frickechione2}.
\end{proof}
Compare Goldman~\cite{puncturedtorus} for a different proof.

The algebraic methods discussed here easily imply several
other qualitative geometric facts:

\begin{proposition}\label{prop:axescross}
Suppose that $x,y,z\in\R$ satisfy 
$\kappa(x,y,z)\le -2$ and $(x,y,z)\neq (0,0,0)$.
Then $(x,y,z)$ is the character of a representation
$\pi\xrightarrow{\rho}\SLtR$
and $\rho(X)$ and $\rho(Y)$ are hyperbolic elements
whose axes cross.
\end{proposition}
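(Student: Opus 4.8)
The plan is to deduce everything from three facts: that the hypotheses force $|x|,|y|,|z|>2$, that the explicit real normal form of \eqref{eq:explicitrep} then produces an $\SLtR$-representation with hyperbolic generators, and that the crossing of the two axes is governed by the sign of $\kappa(x,y,z)-2$.

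First I would pin down the traces. Since $\kappa(x,y,z)\le -2$ we have $x^2+y^2+z^2-xyz=\kappa(x,y,z)+2\le 0<4$, so Lemma~\ref{lem:biggerthantwo} gives the dichotomy that either $|x|,|y|,|z|<2$ or $|x|,|y|,|z|>2$. I would exclude the first case by a direct estimate: if all of $|x|,|y|,|z|<2$ with, say, $x,y\neq 0$, then $|xyz|=|z|\,|xy|<2|xy|\le x^2+y^2<x^2+y^2+z^2$, so $x^2+y^2+z^2-xyz>0$; and if some coordinate vanishes the same strict inequality holds unless $(x,y,z)=(0,0,0)$. Thus $|x|,|y|,|z|<2$ together with $\kappa(x,y,z)\le -2$ would force $(x,y,z)=(0,0,0)$, which is excluded. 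Hence $|x|,|y|,|z|>2$.

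Next, because $|z|>2$ the equation $\zz+\zz^{-1}=z$ has a \emph{real} root $\zz$, so the matrices $\xi_x$ and $\eta_{(y,\zz)}$ of \eqref{eq:explicitrep} are real and define a representation $\rho$ of $\pi$ into $\SLtR$ with character $(x,y,z)$; this gives the first assertion. Since $|\tr\rho(X)|=|x|>2$ and $|\tr\rho(Y)|=|y|>2$, both $\rho(X)$ and $\rho(Y)$ are hyperbolic. Finally I would form the de Sitter vectors $\widehat{\rho(X)},\widehat{\rho(Y)}\in\dst$ of \eqref{eq:hat} representing the invariant axes. The computation \eqref{eq:tracesdot} gives $\langle\widehat{\rho(X)},\widehat{\rho(Y)}\rangle=(2z-xy)/\sqrt{(x^2-4)(y^2-4)}$, and the identity $(2z-xy)^2-(x^2-4)(y^2-4)=4\big(\kappa(x,y,z)-2\big)$ yields
\[
\langle\widehat{\rho(X)},\widehat{\rho(Y)}\rangle^2-1=\frac{4\big(\kappa(x,y,z)-2\big)}{(x^2-4)(y^2-4)}<0,
\]
since $\kappa(x,y,z)\le -2<2$ while $(x^2-4)(y^2-4)>0$ by the previous step. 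Hence $|\langle\widehat{\rho(X)},\widehat{\rho(Y)}\rangle|<1$, so by the lemma relating this inner product to the relative position of the two geodesics the axes are neither ultraparallel nor asymptotic, and therefore cross.

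I expect the only real obstacle to be the first step, namely excluding the compact regime $|x|,|y|,|z|<2$ (where the representation would be conjugate into $\SUt$ and the generators elliptic, so the conclusion would be false): one must use both the all-or-nothing dichotomy of Lemma~\ref{lem:biggerthantwo} and the nonvanishing of $(x,y,z)$. Once $|x|,|y|,|z|>2$ is known, the remaining steps are essentially a rerun of the three-holed-sphere computation, with the single sign change $\kappa(x,y,z)<2$ (rather than the $\kappa(x,y,z)>2$ that gives ultraparallel axes) producing crossing instead of disjoint half-planes.
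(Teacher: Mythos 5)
Your proof is correct, but it takes a genuinely different route from the paper's at both key steps. For the realization, the paper invokes Theorem~\ref{thm:realcharacters} to say the character is either $\SUt$ or $\SLtR$, and rules out the $\SUt$ case by observing that $\kappa\le -2$ with traces in $[-2,2]$ forces $\kappa=-2$, which is realized only by the quaternion representation with character $(0,0,0)$; you instead rule out the compact regime by the elementary estimate $|xyz|<2|xy|\le x^2+y^2$, and then produce the $\SLtR$-representation explicitly from the normal form \eqref{eq:explicitrep} with a real root $\zz$ of $\zz+\zz^{-1}=z$. Your version is more self-contained (it never needs the classification of real characters or the quaternion example), at the cost of only producing \emph{one} $\SLtR$-representative rather than locating every representation with that character. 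For the crossing, the paper works in $\Hth$: by Proposition~\ref{prop:Lie} the Lie product $\Lie\big(\rho(X),\rho(Y)\big)$ represents the common perpendicular of the two axes, the axes cross in $\Ht$ exactly when that involution preserves orientation on $\Ht$, i.e.\ when $\det\Lie>0$, which by \eqref{eq:twokindsofcommutator} is the condition $\tr[\rho(X),\rho(Y)]<2$. You instead stay inside the hyperboloid model and rerun the three-holed-sphere computation \eqref{eq:tracesdot}, getting $\langle\widehat{\rho(X)},\widehat{\rho(Y)}\rangle^2-1 = 4\big(\kappa(x,y,z)-2\big)/\big((x^2-4)(y^2-4)\big)<0$; the two arguments hinge on the same sign of $\kappa-2$ and are essentially dual (indeed $\det\Lie = 2-\kappa$ is, up to the positive factor $(x^2-4)(y^2-4)/4$, the quantity $1-\langle\widehat{X},\widehat{Y}\rangle^2$), but yours has the virtue of being quantitative and two-dimensional, while the paper's generalizes directly to $\Hth$. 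One small point you should make explicit: the unnamed disjointness lemma in the paper only states that $|\langle v_1,v_2\rangle|>1$ characterizes ultraparallel axes, so to conclude crossing you also need that $|\langle v_1,v_2\rangle|=1$ corresponds precisely to asymptotic (or coincident) axes --- a standard fact, verified by the same trace computation --- after which your \emph{strict} inequality $\langle\widehat{\rho(X)},\widehat{\rho(Y)}\rangle^2<1$ (indeed $\le 1-16/\big((x^2-4)(y^2-4)\big)$ since $\kappa\le-2$) finishes the argument as you say.
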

\begin{proof}
Since $\kappa(x,y,z)\le -2 < 2$, Lemma~\ref{lem:biggerthantwo}
applies. The representation $\rho$ with character $(x,y,z)$
is conjugate to either an $\SUt$-representation or an
$\SLtR$-representation. Since 
\begin{equation*}
\tr\big(\rho([X,Y])\big) \le -2, 
\end{equation*}
the only possibility for an $\SUt$-representation occurs if
$\kappa(x,y,z)=-2$. Then $\rho$ is conjugate to 
the quaternion representation

\begin{equation*}
\rho(X) = \bmatrix 0 & -1 \\ 1 & 0 \endbmatrix, \quad
\rho(Y) = \bmatrix i & 0 \\ 0 & -i \endbmatrix,
\end{equation*}
and $(x,y,z)=(0,0,0)$, a contradiction. 
(Compare \S 2.6 of Goldman~\cite{puncturedtorus}.)
Thus $(x,y,z)$ corresponds to an $\SLtR$-representation
$\rho$. Lemma~\ref{lem:biggerthantwo} implies that $\rho(X)$ and $\rho(Y)$
are both hyperbolic. 
Proposition~\ref{prop:Lie} implies that the 
involution fixing the common orthogonal
$\perp(\ell_{\rho(X)},\ell_{\rho(Y)})$ of their respective invariant
axes is given by the Lie product $\Lie\big(\rho(X),\rho(Y)\big)$.
In particular their axes cross if and only if 
$\perp(\ell_{\rho(X)},\ell_{\rho(Y)})$ is orthogonal to the real
plane $\Ht\subset \Hth$, that is, if 
$\Lie\big(\rho(X),\rho(Y)\big)$ defines an orientation-preserving 
involution of $\Ht$. This occurs precisely when the matrix
$\Lie\big(\rho(X),\rho(Y)\big)$ has positive determinant.
By \eqref{eq:twokindsofcommutator}, the Lie product
$\Lie\big(\rho(X),\rho(Y)\big)$ has positive determinant
if and only if the commutator trace 
\begin{equation*}
\tr([\rho(X),\rho(Y)])< 2, 
\end{equation*}
as assumed. The proof of Proposition~\ref{prop:axescross} is complete.
\end{proof}

\subsection{Fenchel-Nielsen coordinates.}

In an influential manuscript written in the early 20th century but
only recently published, Fenchel and Nielsen~\cite{FenchelNielsen}
gave geometric coordinates for Fricke space. 
We briefly relate these coordinates to trace coordinates 
for the surfaces of Euler characteristic $-1$.

\subsubsection*{Pants decompositions.}
\index{pants decompositions}
Decompose $\Sigma$ into a union of three-holed spheres (``pants'')
\begin{equation*}
P_1,\dots, P_l 
\end{equation*}
along a system of $N$ disjoint simple curves 
\begin{equation*}
\gamma_1,\dots, \gamma_N\subset\mathsf{int}(\Sigma).  
\end{equation*}
Let $\partial_1,\dots,\partial_n$ denote components of $\partial\Sigma$.
For a given marked hyperbolic structure on $\Sigma$,
the curves $\gamma_i,\partial_j$ may be taken to be simple closed
geodesics. Theorem~A implies that the isometry type of the 
complementary subsurfaces $P_k$ are determined by the lengths of
the three closed geodesics representing $\partial P_k$.
The resulting map
\begin{align*}
\Fricke(\Sigma) & \xrightarrow{F} (\R_+)^N \times (\R_{\ge 0})^n \\
\langle M\rangle & \longmapsto  \ell_M(\gamma_i) \times \ell_M(\partial_j) 
\end{align*}
which associates to a hyperbolic surface $M$ the lengths of the geodesics %%%
$\gamma_i,\partial_j$ is onto. Its fibers correspond to the various ways
in which the subsurfaces $P_k$ are identified along interior curves $\gamma_i$. 

%%% To this end, choose 
Choose a section $\sigma$ of the map $F$ as follows.
Each interior curve $\gamma$ bounds two subsurfaces, which we denote $P'$ and $P''$.
The corresponding boundary curves are denoted $\gamma'\subset P'$ and
$\gamma''\subset P''$ respectively. 
The {\em twist parameter\/}  $\tau_i\in\R$ represents the displacement 
between points on the marked surfaces $P, P'$ corresponding to the section $\sigma$.
This realizes the Fenchel-Nielsen map $F$ as a principal $\R^N$-bundle over
$\Fricke(\Sigma)$. Wolpert~\cite{Wolpert1,Wolpert2,Wolpert3} shows that, when $\Sigma$ 
is closed and orientable, the Fenchel-Nielsen coordinates on $\Fricke(\Sigma)$ are
{\em canonical\/} or {\em Darboux\/} coordinates for the symplectic structure arising
from the Weil-Petersson K\"ahler form on Teichm\"uller space, and indeed $F$ is
a moment map for a completely integrable system on $\Fricke(\Sigma)$.

In the orientable case, $\Sigma \approx \Sigma_{g,n}$, then
\begin{equation}\label{eq:numberofinteriorcurves} %%%%
N = 3 (g -1) + n.  
\end{equation}
% Let $\partial_1,\dots,\partial_n$ denote the set of components of $\partial\Sigma$.
Since $\chi(\Sigma) = 2-2g + n$ and each $P_k$ has Euler characteristic $-1$, 
the number $l$ of subsurfaces $P_k$ equals 
\begin{equation*}
l = -\chi(\Sigma) = 2 + 2g + n. 
\end{equation*}
%%%%
Consider the set $S$ of pairs $(\alpha,C)$, where $\alpha$ is one of the $N+n$ curves
$\partial_i,\gamma_j$ and $C\subset P_k$ is a collar neighborhood of $\alpha$ inside
$P_k$ for some $k$. 
%%%%
Each $C$ lies in exactly one $P_k$ and each subsurface $P_k$ contains
exactly three pairs $(\alpha,C)$, the cardinality of $S$ equals $3 l$. Furthermore
the number of collars $C$ equals $2 N + n$, since each $\gamma_i$ is two-sided in $\Sigma$
and each $\partial_j$ is one-sided. Computing the cardinality of $S$ in two ways:
\begin{equation*}
2 N + n = 3 l = 3 (2 g - 2 + n) 
\end{equation*}
implies \eqref{eq:numberofinteriorcurves}.

The nonorientable case, say $\Sigma\approx C_{k,n}$, 
reduces to the orientable case by cutting along a disjoint
family of simple loops:
$k$ of them reverse orientation and 
$N = k + 2 - n$ preserve orientation.
This follows easily from the classification of
surfaces: $C_{k,n}$ can be obtained from the planar
surface $\Sigma_{0,k+n}$ by attaching $k$ cross-caps
(copies of $C_{0,k}$) to $k$ of the components of 
$\Sigma_{0,k+n}$. In the nonorientable surface 
$\Sigma \approx C_{k,n}$ are $k$ disjoint orientation-reversing
simple loops $s_1,\dots, s_k$ so that the surface %%%
$\Sigma'$ obtained by splitting $\Sigma$ along 
$s_1,\dots, s_k$ identifies to $\Sigma$. %%%
Denote the resulting quotient map by
\begin{equation*}
\Sigma' \approx \Sigma_{0,n+k} \xrightarrow{\phi} \Sigma \approx C_{k,n}.
\end{equation*}
Let $s_i'\subset \Sigma'$ denote the preimage $\phi^{-1}(s_i)$.
Given a hyperbolic structure on $\Sigma'$, there is a uniqe way of
extending this hyperbolic structure to $\Sigma$ as follows.  As usual,
assume that each $s_i'\subset \Sigma'$ is a closed geodesic. Choose
$\epsilon>0$ sufficiently small so that all the $\epsilon$-collars 
$N_\epsilon(s_i')$ of $s_i'$ are disjoint. Denote the complement
of these collars by
\begin{equation*}
\Sigma'' := \Sigma' \setminus \bigcup_{i=1}^k N_\epsilon(s_i').
\end{equation*}
Represent the geodesic $s_i'$ as the quotient $\tilde{s}/\langle \xi\rangle$,  %%% _i removed from \xi
where $\xi\in\PSLtR$ is hyperbolic and $\tilde{s}\subset\Ht$ is the $\xi$-invariant
geodesic. That is, $\xi$ is a transvection along the geodesic $\tilde{s}$. 
Let $\sqrt{\xi}$ denote the unique {\em glide-reflection\/} whose square is
$\xi$; it is the composition of reflection in $\tilde{s}$ with the transvection
of displacement $\ell(\xi)/2$ where $\ell(\xi)$ is the displacement of $\xi$.
If a matrix representative of $\xi$ has trace $x > 2$, then 
$x = 2\cosh\big(\ell(\xi)/2\big)$ and a matrix representing the glide-reflection
$\sqrt{\xi}$ equals
\begin{equation*}
\frac1{\sqrt{x-2}} \big(\xi - \Id\big).
\end{equation*}
Let $N_\epsilon(\tilde{s})\subset\Ht$ be the tubular neighborhood of width $\epsilon$
about $\tilde{s}$. The quotient 
$N_\epsilon(\tilde{s})/\langle\sqrt{\xi}\rangle$ is a cross-cap bounded by a hypercycle
(equidistant curve).
The union
\begin{equation*}
\Sigma'' \bigcup N_\epsilon(\tilde{s})/\langle\sqrt{\xi}\rangle
\end{equation*}
is the desired hyperbolic structure on $\Sigma$.

\subsubsection*{Fenchel-Nielsen coordinates on $\Sigma_{1,1}$.}
\index{Fenchel-Nielsen coordinates for one-holed torus}
We relate the Fricke trace coordinates to Fenchel-Nielsen coordinates as follows. 
We suppose that the boundary $\partial\Sigma$ has length $b\ge 0$,
the case $b = 0$ corresponding to the complete finite-area structure
(where the holonomy around $\partial\Sigma$ is parabolic).

Suppose that $X\subset \Sigma$ has length $ l> 0$, and has holonomy represented by:
\begin{equation*}
\tilde\rho(X) := \bmatrix e^{l/2} & 0 \\ 0  & e^{-l/2}  \endbmatrix
\end{equation*}
where $x = 2 \cosh(l/2)$.  Then 
\begin{equation*}
\Fix\big(\rho(X)\big) = \{0,\infty\} .
\end{equation*}
A fundamental domain for the cyclic group $\langle \rho(X)
\rangle$ is bounded by the geodesics with endpoints 
$\pm e^{-l/2} $ and $\pm e^{l/2} $ respectively.

Normalize the twist parameter $\tau$ so that $\tau = 0$ corresponds to the case that the 
invariant axes $\ell_{\rho(X)}$, $\ell_{\rho(Y)}$ are orthogonal. 
In that case take $\Fix(\rho(Y)) = \pm 1$ and define 
\begin{equation*}
\tilde\rho_0(Y) :=   \bmatrix \cosh(\mu/2) & \sinh(\mu/2) \\ \
\sinh(\mu/2)     & \cosh(\mu/2)  \endbmatrix %%%%o
\end{equation*}
where $y = 2 \cosh(\mu/2)$. 
%%% is it worth mentioning the role of the sign of \mu here?

A fundamental domain for the cyclic group $\langle \rho(Y)\rangle$ 
is bounded by the geodesics with endpoints 
$-e^{\pm \mu/2} $ and $e^{\pm \mu/2} $
respectively. For this representation, 
\begin{equation*}
z = \tr\big(\tilde\rho(X)\tilde\rho_0(Y)\big) = x y/2 = 2 \cosh(l/2)\cosh(\mu/2). 
\end{equation*}
Let $\tau\in\R$ be the twist %%% 
parameter for Fenchel-Nielsen flow. (Compare Wolpert~\cite{Wolpert1}).
The orbit of the Fenchel-Nielsen twist deformation is defined by the representation
\begin{equation*}
\rho(Y) \;:=\; \rho_0(Y) \exp \big(\,(\tau/2)\, \widehat{\tilde\rho(X)}\,\big)
\end{equation*}
where
\begin{equation*}
\widehat{\tilde\rho(X)} = \bmatrix 1 & 0 \\ 0 & -1 \endbmatrix 
\end{equation*}
defines the one-parameter subgroup
\begin{equation*}
\exp \big(\,(\tau/2)\, \widehat{\tilde\rho(X)}\,\big)
\;=\; \bmatrix e^{\tau/2} & 0 \\ 0 & e^{-\tau/2} \endbmatrix.
\end{equation*}
Now $x = 2 \cosh(l/2)$ is constant but
\begin{align*}
y \;=\; \tr\big(\tilde\rho(Y)\big) 
& \;=\; \tr\bigg( \tilde\rho_0(Y)\,  \bmatrix e^{\tau/2} & 0 \\ 0 & e^{-\tau/2} \endbmatrix\bigg)
\\ & \;=\; 2 \cosh(l/2)\, \cosh(\tau/2).
\end{align*}
Similarly,
\begin{align*}
z \;=\; \tr\bigg(\tilde\rho(X)\,\tilde\rho(Y)\bigg) 
& = \tr\bigg( 
\bmatrix e^{l/2} & 0 \\ 0 & e^{-l/2} \endbmatrix \, %%% was \l, changed to l
\tilde\rho_0(Y)\,  \bmatrix e^{\tau/2} & 0 \\ 0 & e^{-\tau/2} \endbmatrix\bigg) \\
& \;=\; 2 \cosh(\mu/2) \cosh\big((l + \tau)/2\big).
\end{align*}
Now the commutator trace $\tr\big(\tilde\rho([X,Y])\big)$ equals
\begin{equation*}
-2 \cosh(b/2) = \kappa(x,y,z) = 2 - \sinh^2(l/2) \sinh^2(\mu/2) 
\end{equation*}
whence
\begin{equation*}
\cosh^2(\mu/2) = 1 - 4\csch^2(l/2)\sinh^2(b/4).  
\end{equation*}
Therefore the Fricke trace coordinates are expressed in terms of Fenchel-Nielsen coordinates by:
\begin{align*}
x & \;=\;  2 \cosh(l/2) \\
y & \;=\;  2 \sqrt{1 - 4\csch^2(l/2)\sinh^2(b/4)}\, \cosh(\tau/2) \\
z & \;=\;  2 \sqrt{1 - 4\csch^2(l/2)\sinh^2(b/4)}\, \cosh\big((\tau+l)/2).
\end{align*}

\subsection{The two-holed cross-surface.}\label{sec:twoholedcross}

Following John H.\ Conway's suggestion, we call a surface homeomorphic
to a real projective plane a {\em cross-surface.\/} Suppose that $\Sigma = 
C_{0,2}$ is a 2-holed cross-surface (Figure~\ref{fig:ribbongraph5}). 
Then $\pi_1(\Sigma)$ is freely generated by two orientation-reversing 
simple loops $P,Q$ on the interior.
These loops correspond to the two $1$-handles in Figure~\ref{fig:ribbongraph5}. 
The two boundary components $\partial_\pm$ of $\Sigma$ correspond to 
elements
\begin{equation*}
R := P^{-1}Q^{-1}, \quad R' := Q P^{-1},
\end{equation*}
obtaining a redundant geometric presentation of $\pi_1(\Sigma)$:
\begin{equation*}
\pi \;=\; \langle P,Q,R,R' \;\mid\; PQR =  P Q^{-1} R' = \Id \rangle.
\end{equation*}
The characters of the generators of this presentation define a presentation of the 
character ring
\begin{equation*}
\C[f_P,f_Q,f_R,f_{R'}] \,/\, \big( f_R + f_{R'} -  f_P f_Q \big)
% \C[f_X,f_Y,f_{Z_+},f_{Z_-}]/ \big( 
% f_{Z_+} + f_{Z_-}] = f_X f_Y \big),
% z_+ + z_- = x y \big),
\end{equation*}
the relation being \eqref{eq:basic}. 
Of course $p,q,r$ (respectively $p,q,r'$)
are free generators for the character ring
(a polynomial ring in three variables).

The Fricke space of $\Sigma$ was computed by Stantchev~\cite{Stantchev};
compare also the forthcoming paper by Goldman-McShane-Stantchev-Tan~\cite{GMST}. 
For a given hyperbolic structure on $\Sigma$, the holonomy transformations
$\rho(P)$ and $\rho(Q)$ reverse orientation, their traces are
purely imaginary, and the traces of $\rho(R)$ and $\rho(R')$ are real. 
For this reason we write
\begin{align*} 
i p = f_P &  = \tr\big(\rho(P)\big) \in i\R \\
i q = f_Q &  = \tr\big(\rho(Q)\big) \in i\R\\
r   = f_R    & = \tr\big(\rho(R)\big) \in \R  \\
r'  = f_{R'} & = \tr\big(\rho(R')\big) \in \R 
\end{align*}
where $p,q,r,r'\in \R$ and
\begin{equation*}
r' := r + p q \in\R.
\end{equation*}
By an analysis similar to that of $\Sigma_{0,3}$
and $\Sigma_{1,1}$, the Fricke space of $\Sigma$ identifies with
\begin{equation*}
\{ (p,q,r) \in \R^3  \;\mid\;  r \le -2,\; pq + r \ge 2 \}.
\end{equation*}

Compare \cite{Stantchev,GoldmanStantchev,GMST} for further details.
%%% explain how this relates to the basic trace identity

\subsection{The one-holed Klein bottle.}

Now suppose $\Sigma$ is a one-holed Klein bottle.
(Compare Figure~\ref{fig:ribbongraph4}.)
Once again we choose free generators $P,Q$ for $\pi$ corresponding
to the two 1-handles in Figure~\ref{fig:ribbongraph4} which
reverse orientation. The boundary component $D$ corresponds
to $P^2Q^2$ and, writing $R = (P Q)^{-1}$, we obtain 
a redundant geometric presentation 
\begin{equation*}
\pi \;=\; \langle P,Q,R, D \;\mid\;  P Q R = \Id, D  = P^2 Q^2 \rangle
\end{equation*}
and the character ring has presentation
\begin{equation*}
\C[f_P,f_Q,f_R,f_D]/\big( f_D - (f_P f_Q f_R - f_P^2 - f_Q^2 + 4) \big)
\end{equation*}
Since $P,Q$ reverse orientation on $\Ht$,
the functions $f_P,f_Q$ are purely imaginary and $f_R,f_D$ are real.
Thus we write 
\begin{align*}
ip & = f_P = \tr\big(\rho(P)\big) \in i\R \\
iq & = f_Q = \tr\big(\rho(Q)\big) \in i\R \\
r & = f_R = \tr\big(\rho(R)\big) \in \R \\
d & = f_D = \tr\big(\rho(D)\big) \in \R 
\end{align*}
and the Fricke space of $\Sigma$ identifies with
\begin{equation*}
\{ (p,q,r) \in \R^3  \;\mid\; p^2 + q^2 - pqr \ge 0 \}.
\end{equation*}
See Stantchev~\cite{Stantchev} and \cite{GMST} for details.

\section{Three-generator groups and beyond}

Let $\Sigma$ be a compact connected surface-with-boundary.
Suppose $\partial\Sigma\neq\emptyset$.
Then the fundamental group $\pi_1(\Sigma)$ is free of rank $3$
if and only if the Euler characteristic $\chi(\Sigma) = -2$.
Such a surface is homeomorphic to one of the four topological
types:
\begin{itemize}
\item
A $4$-holed sphere $\Sigma_{0,4}$;
\item
A $2$-holed torus $\Sigma_{1,2}$; 
\item
A $3$-holed cross-surface (projective planes) $C_{0,3}$;
\item
A $2$-holed Klein bottle $C_{1,2}$.
\end{itemize}
In this %%% chapter 
section
we only consider the orientable topological types,
namely $\Sigma_{0,4}$ and $\Sigma_{1,2}$.
%%%
We relate their character varieties to those of nonorientable
surfaces $C_{0,2}$ and $C_{1,1}$, 
discussed in %%\S 4.6 and \S 4.7.
\S\ref{sec:C02} and \S\ref{sec:C11}.

\subsection{The $\SLtC$-character ring of $\F_3$.}

Representations $\rho$ of the free group
$\langle X_1, X_2, X_3\rangle$ 
of rank three correspond to arbitrary triples 
\begin{equation*}
\big( \rho(X_1), \rho(X_2), \rho(X_3) \big) \;\in\; \SLtC^3.
\end{equation*}
As before we consider the quotient space
(in the sense of Geometric Invariant Theory) 
under the action of $\SLtC$ by inner automorphisms,
the {\em character variety.\/} 
Its coordinate ring is by definition the 
subring of invariants (the {\em character ring\/}) %%%%
\begin{equation*}
\C[\SLtC^3]^{\PSLtC} \;\subset\; \C[\SLtC^3] %%%%
\end{equation*}
of the induced {\em effective\/} $\PSLtC$-action on the ring of 
coordinate ring $\C[\SLtC^3]$.
%%%%

We saw in \S 2 that for a free group of rank two,
the character variety is an affine space and 
the charcter ring is a polynomial ring.
The situation in rank three is more complicated.
The character variety $V_3$ is a six-dimensional
hypersurface in $\C^7$, which admits a branched double
covering onto the six-dimensional affine space $\C^6$.

Explicitly,
the character ring $\RR_3$ is generated by eight trace functions
\begin{equation*}
% f_X, f_Y, f_Z, f_{XY}, f_{YZ}, f_{ZX}, f_{XYZ}, f_{XZY} 
t_1, t_2, t_3, t_{12}, t_{23}, t_{13}, t_{123}, t_{132}
\end{equation*}
defined by
\begin{align*}
t_i (\rho) &\; :=\; \tr\big( \rho(X_i)\big) \\
t_{ij} (\rho) &\;:=\; \tr\big( \rho(X_iX_j)\big) \\
t_{ijk} (\rho) &\;:=\; \tr\big( \rho(X_iX_jX_k)\big) 
\end{align*}
subject to two relations expressing the sum and product of traces
of the length 3 monomials in terms of traces of monomials of length 1
and 2:
\begin{align}
t_{123} \,+\, t_{132} & \;=\; t_{12}t_3 + t_{13}t_2 + t_{23}t_1 - t_1t_2t_3 
\label{eq:sum1}  \\
t_{123}\,\,\,  t_{132} & \;=\;  (t_1^2 + t_2^2 +t_3^2) \,+\,
(t_{12}^2 + t_{23}^2 \,+\, t_{13}^2) \  -  \label{eq:product1}\\  
& \qquad \  ( t_1t_2t_{12} + t_2t_3t_{23} + t_3t_1t_{13} ) \,+\,
t_{12} t_{23} t_{13} \,-\, 4. \notag 
\end{align}
We call \eqref{eq:sum1} the {\em Sum Relation\/} and 
\eqref{eq:product1} the {\em Product Relation\/} respectively.
They imply that the {\em triple traces\/} $t_{123}$ and $t_{132}$
are the respective roots $\lambda$ of the irreducible monic quadratic equation:
\begin{equation*}
\lambda^2 - f_\Sigma \lambda + f_\Pi = 0 
\end{equation*}
where the coefficients:
\begin{equation*}
f_\Sigma \;:=\; t_{12} t_3 + t_{23} t_1 +t_{13} t_2 - t_1 t_2 t_3 
\end{equation*}
and 
\begin{align*}
f_\Pi & \; := \;  (t_1^2 + t_2^2 + t_3^2) \\
& \quad + (t_{12}^2 + t_{23}^2 + t_{13}^2) \\
& \qquad - (t_1t_2t_{12} + t_2t_3t_{23} + t_3t_1t_{13}) \\
& \qquad \quad + t_{12}t_{23}t_{13} - 4 
\end{align*}
are the polynomials appearing in the right-hand sides 
of \eqref{eq:sum1} and \eqref{eq:product1} respectively.
\index{sum relation}
\index{product relation}

\subsubsection*{$V_3$ is a hypersurface in $\C^7$.}
\index{character variety of $\F_3$ is a hypersurface}
Eliminating $t_{132}$ in \eqref{eq:sum1} as
\begin{equation}\label{eq:eliminate132}
 t_{132}  \;=\; t_{12}t_3 + t_{13}t_2 + t_{23}t_1 - t_1t_2t_3  - t_{123},
\end{equation}
realizes $V_3$ as the hypersurface in $\C^7$ consisting of all %%% realizes
\begin{equation*}
\big(t_1,t_2,t_3,t_{12},t_{23},t_{13},t_{123}\big) \in \C^7
\end{equation*}
satisfying
\begin{align*}
t_{123} \;
\big( t_{12}t_3 + t_{13}t_2 + t_{23}t_1 & - t_1t_2t_3  - t_{123}\big)  = 
  (t_1^2 + t_2^2 +t_3^2)  \,+\,
(t_{12}^2 + t_{23}^2 \,+\, t_{13}^2) \\ &  -  
 ( t_1t_2t_{12} + t_2t_3t_{23} + t_3t_1t_{13} ) \,+\, 
t_{12} t_{23} t_{13} \,-\, 4. %%%% 
\end{align*}
%%%% expressing $V_3$ as a hypersurface in $\C^7$.

\subsubsection*{$V_3$ double covers $\C^6$.}\label{sec:doublecoveringcv}
\index{character variety of $\F_3$ is a double covering}
The double covering of $V_3$ over $\C^6$ arises
from the composition
\begin{equation}\label{eq:doublecovering}
V_3 \hookrightarrow \C^8 \xrightarrow{\Pi} \C^6 
\end{equation}
where
\begin{align*}
\C^8 & \xrightarrow{\Pi}  \C^6 \\
\bmatrix 
t_1 \\ t_2 \\ t_3 \\ 
t_{12} \\ t_{23} \\ t_{13} \\ t_{123} \\ t_{132}
\endbmatrix
&\longmapsto 
\bmatrix 
t_1 \\ t_2 \\ t_3 \\ 
t_{12} \\ t_{23} \\ t_{13}
\endbmatrix
\end{align*}
is the coordinate projection.

\begin{proposition}\label{prop:onto}
The composition \eqref{eq:doublecovering}
\begin{align*}
V & \xrightarrow{\bt}  \C^6 \\
[\rho] &\longmapsto \bmatrix 
t_1(\rho) \\ t_2(\rho) \\ t_3(\rho) \\ 
t_{12}(\rho) \\ t_{23}(\rho) \\ t_{13}(\rho)
\endbmatrix
\end{align*}
is onto. Furthermore it is a double covering
branched along the discriminant hypersurface
in $\C^6$ defined by 
\begin{align*}
\big(t_{12}t_3 + t_{13}t_2 + t_{23}t_1 - t_1t_2t_3\big)^2 & = 
4\bigg( 
(t_1^2 + t_2^2 +t_3^2) \,+\,
(t_{12}^2 + t_{23}^2 \,+\, t_{13}^2) \\ &  -  
( t_1t_2t_{12} + t_2t_3t_{23} + t_3t_1t_{13} ) \,+\,
t_{12} t_{23} t_{13} \,-\, 4 \bigg).
\end{align*}
\end{proposition}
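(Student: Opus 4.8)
The plan is to prove the two assertions by rather different means: the branched‑double‑cover statement is formal once we exploit the monic quadratic satisfied by $t_{123}$, whereas surjectivity requires an honest construction of representations.

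I would dispose of the covering structure first. Eliminating $t_{132}$ via \eqref{eq:eliminate132} displays $V_3\subset\C^7$ as the zero set of
\[
t_{123}^2 \,-\, f_\Sigma\, t_{123} \,+\, f_\Pi \;=\; 0,
\]
a \emph{monic} quadratic in the single fiber coordinate $t_{123}$ whose coefficients $f_\Sigma,f_\Pi$ are functions of $(t_1,\dots,t_{13})$ pulled back along $\bt$. Hence the fiber of $\bt$ over a point of $\C^6$ is precisely the set of roots of this quadratic, and by \eqref{eq:sum1}--\eqref{eq:product1} those roots are $t_{123}$ and $t_{132}$. Two distinct roots occur exactly when the discriminant $f_\Sigma^2-4f_\Pi$ is nonzero, while the two sheets coalesce precisely on $\{f_\Sigma^2=4f_\Pi\}$; this last equation is the displayed discriminant hypersurface. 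Thus, granting surjectivity, $\bt$ is a double covering branched exactly along it.

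For surjectivity, fix $(t_1,t_2,t_3,t_{12},t_{23},t_{13})\in\C^6$ and use Theorem~\ref{thm:vogt} to produce $\xi_1,\xi_2\in\SLtC$ with $\tr(\xi_1)=t_1$, $\tr(\xi_2)=t_2$, $\tr(\xi_1\xi_2)=t_{12}$. Suppose first $\kappa(t_1,t_2,t_{12})\neq 2$, so $\langle\xi_1,\xi_2\rangle$ is irreducible and, by Proposition~\ref{prop:irreducibility}\,\eqref{item:algebrabasis}, $\{\Id,\xi_1,\xi_2,\xi_1\xi_2\}$ is a basis of $\Mat$. Nondegeneracy of the trace form $(A,B)\mapsto\tr(AB)$ then makes the four functionals $M\mapsto\tr(M),\tr(\xi_1 M),\tr(\xi_2 M),\tr(\xi_1\xi_2 M)$ a linear coordinate system on $\Mat$. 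Consequently, for each $w\in\C$ there is a unique $\xi_3(w)\in\Mat$ with $\tr(\xi_3)=t_3$, $\tr(\xi_1\xi_3)=t_{13}$, $\tr(\xi_2\xi_3)=t_{23}$ and $\tr(\xi_1\xi_2\xi_3)=w$, and $\xi_3(w)=\xi_3(0)+wV$ depends affinely on $w$ for a fixed nonzero $V\in\Mat$. The only remaining constraint is $\det(\xi_3(w))=1$; since $\det$ is a quadratic form, this is a polynomial equation of degree $\le 2$ in $w$, solvable over $\C$. Any solution $w_0$ yields $(\xi_1,\xi_2,\xi_3(w_0))\in\SLtC^3$ realizing all six prescribed traces, so the given point lies in the image of $\bt$.

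The hard part is closing the two gaps in the surjectivity argument. First, one must check that $\det(\xi_3(w))=1$ is not an inconsistent constant; I would verify that its leading coefficient $\det(V)$ (equivalently, the determinant of the traceless matrix $V$ dual to $\tr(\xi_1\xi_2\,\cdot\,)$) is nonzero in the irreducible case, so that a root $w_0$ genuinely exists. Second, one must handle the reducible locus $\kappa(t_1,t_2,t_{12})=2$, where $\{\Id,\xi_1,\xi_2,\xi_1\xi_2\}$ degenerates: this locus is a proper Zariski‑closed subset of $\C^6$, and since $\bt$ restricts to a finite degree‑two surjection over the dense open irreducible locus while $V_3\subseteq\{\Phi=0\}$ maps finitely to $\C^6$, the image is closed and hence all of $\C^6$; alternatively one realizes these characters directly by upper‑triangular normal forms. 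Carrying out the determinant computation and the reducible‑locus bookkeeping is where the real work lies; everything else is formal.
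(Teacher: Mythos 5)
Your argument is correct and, in its essentials, reproduces the paper's own proof of surjectivity: the paper likewise invokes Theorem~\ref{thm:vogt} to fix $\xi_1,\xi_2$, observes that in the irreducible case the three trace conditions on $\xi_3$ cut out a (nonempty) affine line in $\Mat$ --- the paper gets this from linear independence of $\{\Id,\xi_1,\xi_2\}$ where you use the full basis $\{\Id,\xi_1,\xi_2,\xi_1\xi_2\}$ of Proposition~\ref{prop:irreducibility} --- and then solves $\det=1$ as a quadratic along that line. The one step you defer, $\det(V)\neq 0$, is exactly where the paper's real computation lives, and it closes as you anticipate: your $V$ is trace-orthogonal to $\Id,\xi_1,\xi_2$, and so is $\Lie(\xi_1,\xi_2)=\xi_1\xi_2-\xi_2\xi_1$ by cyclic invariance of the trace, so one-dimensionality of that orthogonal complement (the trace form being nondegenerate) forces $V$ to be a nonzero scalar multiple of $\Lie(\xi_1,\xi_2)$; then \eqref{eq:twokindsofcommutator} gives $\det\Lie(\xi_1,\xi_2)=2-\kappa(t_1,t_2,t_{12})\neq 0$. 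For the reducible locus $\kappa(t_1,t_2,t_{12})=2$, the paper follows your second alternative, writing explicit upper-triangular $\xi_1,\xi_2$ and a companion matrix $\xi_3$ in the two cases $t_{12}=a_1a_2+(a_1a_2)^{-1}$ and $t_{12}=a_1a_2^{-1}+a_1^{-1}a_2$; your primary alternative --- $\bt$ is finite because $\Phi$ is monic in $t_{123}$, so its image is closed and contains the dense irreducible locus --- is also sound, granted the standing fact that the character ring is generated by the eight traces, and is arguably cleaner. Your derivation of the branching structure from the monic quadratic is the intended one (the paper leaves it implicit after proving the Sum and Product Relations \eqref{eq:sum1}--\eqref{eq:product1} and eliminating $t_{132}$ via \eqref{eq:eliminate132}); the only point worth making explicit there is that \emph{both} roots of the quadratic are attained by characters, which follows since the involution $X_i\mapsto X_i^{-1}$ preserves all single and double traces while swapping $t_{123}\leftrightarrow t_{132}$ --- this is the elliptic involution the paper later identifies as the deck transformation of the double covering.
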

\noindent
The goal of this section is to prove the identities
\eqref{eq:sum1}, \eqref{eq:product1} and Proposition~\ref{prop:onto}.

\subsubsection*{Proof of the Sum Relation.}
\index{sum relation}
To prove these identities, we temporarily introduce the following 
notation. Let $\xi = \rho(X_1)$, $\eta = \rho(X_2)$, $\zeta = \rho(X_3)$,
so that \eqref{eq:sum1} becomes:
\begin{align}
\tr(\xi\eta\zeta) + \tr(\xi\zeta\eta) & = \tr(\xi\eta)\tr(\zeta) \notag\\ 
& \qquad - \tr(\zeta)\tr(\xi)\tr(\eta) + \tr(\zeta\xi)\tr(\eta) \notag\\ 
& \qquad \qquad + \tr(\eta\zeta)\tr(\xi). \label{eq:sum2}
\end{align}
% \begin{equation}\label{eq:sum}
% t_{123} + t_{132}  =  t_{12} t_3 + t_{23} t_1 +t_{13} t_2 - t_1 t_2 t_3
% \end{equation}
 
To prove \eqref{eq:sum2}, % this formula, 
apply the Basic Identity \eqref{eq:basic} three times: %%%%
\begin{align}
\tr(\xi\eta\zeta) + \tr(\xi\eta\zeta^{-1}) & = \tr(\xi\eta)\tr(\zeta) \label{eq:xyz}\\ 
\tr(\zeta^{-1}\xi\eta) + \tr(\zeta^{-1}\xi\eta^{-1}) & = \tr(\zeta^{-1}\xi)\tr(\eta) 
				\notag \\
& = (\tr(\zeta)\tr(\xi)-\tr(\zeta\xi))\tr(\eta)  \label{eq:zxy}\\ 
\tr(\eta^{-1}\zeta^{-1}\xi) + \tr(\eta^{-1}\zeta^{-1}\xi^{-1}) & = 
		\tr(\eta^{-1}\zeta^{-1}) \tr(\xi) \notag\\
& = \tr(\eta\zeta) \tr(\xi)\label{eq:yzx}
\end{align}
Now add \eqref{eq:xyz}, subtract \eqref{eq:zxy} and
add \eqref{eq:yzx} to obtain: 
\begin{align}
\big(\tr(\xi\eta\zeta) & + \tr(\xi\eta\zeta^{-1})\big)   - \big(\tr(\zeta^{-1}\xi\eta) + \tr( \zeta^{-1} \xi \eta^{-1}) \big) \notag \\
& \quad + \big( \tr(\eta^{-1} \zeta^{-1} \xi) + \tr( \xi \zeta \eta )\big)  \notag \\
& \qquad = \tr(\xi\eta)\tr(\zeta) -  \big(\tr(\zeta)\tr(\xi)-\tr(\zeta\xi)\big)\tr(\eta)  \notag \\
& \qquad \qquad  + \tr(\eta\zeta) \tr(\xi) \label{eq:sumrearranging}
\end{align}
The right hand side of \eqref{eq:sumrearranging} is the right-hand side of
\eqref{eq:sum2}. The left-hand side of \eqref{eq:sumrearranging} equals:
\begin{align*}
\tr(\xi\eta\zeta) + 
 \big(\tr(\xi\eta\zeta^{-1})  & - \tr(\zeta^{-1}\xi\eta) \big)  \\ +
 \big(-\tr( \zeta^{-1} \xi \eta^{-1})  &+ \tr(\eta^{-1} \zeta^{-1} \xi) \big) \\
 + \tr( \xi \zeta \eta )  & \;=\;  \tr(\xi\eta\zeta) + \tr( \xi \zeta \eta ),
\end{align*}
the left-hand side of \eqref{eq:sum2}, from
which \eqref{eq:sum2} follows.

% Now subtract  \eqref{eq:zxy} from the sum of \eqref{eq:xyz}
% and \eqref{eq:yzx}, using the facts
% \begin{align*}
% \tr(XY\zeta^{-1}) & = \tr(Z^{-1}XY) \\
% \tr(Z^{-1}XY^{-1}) & = \tr(Y^{-1}Z^{-1}X) \\
% \tr(Z^{-1}X^{-1}Y^{-1}) & = 
% \tr(Y^{-1}Z^{-1}X^{-1}) = 
% \tr(XZY) 
% \end{align*}
% to obtain
% \begin{align*}
% \tr(XYZ) + \tr(XZY) & = \tr(XY)\tr(Z) \\ & \qquad - \tr(Z)\tr(X)\tr(Y) + \tr(ZX)\tr(Y)\\ 
% & \qquad \qquad + \tr(YZ)\tr(X)
% \end{align*}
% from which \eqref{eq:sum} follows.

\subsubsection*{Proof of the Product Relation.}
\index{product relation}
 
We derive this formula in several steps. Directly applying the
Basic Identity \eqref{eq:basic}:
\begin{align}\label{eq:zxzy}
\tr(\zeta\xi\zeta\eta) & = \tr(\zeta\xi) \tr(\zeta\eta) - \tr(\xi\eta^{-1}) \\
& = \tr(\zeta\xi)\tr(\zeta\eta) - \left( \tr(\xi)\tr(\eta) - \tr(\xi\eta) \right) \notag\\
& = \tr(\zeta\xi)\tr(\zeta\eta) - \tr(\xi)\tr(\eta) + \tr(\xi\eta) \notag 
\end{align}
Apply 
a calculation similar to %%%
\eqref{eq:xyxiy} to $\xi,\zeta^{-1}$:
\begin{equation}\label{eq:xzixizi}
\tr(\xi\zeta^{-1}\xi^{-1}\zeta^{-1})  =  
\tr(\xi)\tr(\zeta)\tr(\zeta\xi) - \tr(\zeta\xi)^2 - \tr(\xi)^2 + 2
\end{equation}

\begin{align}\label{eq:xyzxzy}
\tr(\xi\eta\zeta\xi\zeta\eta) & = \tr(\xi\eta) \tr(\zeta\xi\zeta\eta)  \\ 
& \qquad - \tr(\xi\zeta^{-1}\xi^{-1}\zeta^{-1}) \notag\\
& = \tr(\xi\eta) \left( \tr(\zeta\xi)\tr(\zeta\eta) - \tr(\xi)\tr(\eta) + \tr(\xi\eta) \right) \notag \\
& \qquad - \left(  \tr(\xi)\tr(\zeta)\tr(\zeta\xi) - \tr(\zeta\xi)^2 - \tr(\xi)^2 + 2 \right) \notag\\
& \qquad\qquad\qquad\text{(by \eqref{eq:zxzy} and \eqref{eq:xzixizi})}\notag\\
& = \tr(\xi\eta)\tr(\zeta\xi)\tr(\eta\zeta) \notag \\ 
& \qquad \quad - \tr(\xi)\tr(\eta)\tr(\xi\eta) - \tr(\zeta)\tr(\xi)\tr(\zeta\xi) \notag \\ 
& \qquad \qquad + \tr(\xi\eta)^2 + \tr(\xi)^2 - 2.  \notag   %%% period added
\end{align}

\noindent 
Finally, applying \eqref{eq:xyzxzy} and the Commutator Identity
\eqref{eq:commutator} to $\eta,\zeta$:
\begin{align*}
\tr(\xi\eta\zeta)\tr(\xi\zeta\eta) & = \tr(\xi\eta\zeta\xi\zeta\eta) + \tr(\eta\zeta\eta^{-1}\zeta^{-1}) \\
& = 
\Big( \tr(\xi\eta)\tr(\zeta\xi)\tr(\eta\zeta)  - \tr(\xi)\tr(\eta)\tr(\xi\eta) \\ 
& \quad  - \tr(\zeta)\tr(\xi)\tr(\zeta\xi) + \tr(\xi\eta)^2 + \tr(\zeta\xi)^2 + \tr(\xi)^2 - 2 \Big) \\
& \qquad + \Big(
\tr(\eta)^2 + \tr(\zeta)^2 + \tr(\eta\zeta)^2 - \tr(\eta)\tr(\zeta)\tr(\eta\zeta) - 2  \Big) \end{align*}
obtaining \eqref{eq:product1}.

\subsubsection*{Proof that $\bt$ is onto.}
\index{character variety of $\F_3$ is a hypersurface}

%%%% should emphasize here that the character variety is a hypersurface but not an affine space

Now we prove Proposition~\ref{prop:onto}.
For a more general treatment see Florentino~\cite{Florentino}.

Theorem~\ref{thm:vogt} guarantees $\xi_1,\xi_2\in\SLtC$  such that
\begin{align}\label{eq:initial}
\tr(\xi_1) & = t_1, \notag \\ 
\tr(\xi_2) &= t_2,   \\ 
\tr(\xi_1\xi_2) &= t_{12}\notag.
\end{align}
We seek $\xi_3\in\SLtC$ such that
\begin{align}\label{eq:rankthreeequations}
\tr(\xi_3) & = t_3, \notag\\  
\tr(\xi_2\xi_3) &= t_{23},\notag \\  
\tr(\xi_1\xi_3) &= t_{13}. 
\end{align}
To this end, consider the affine subspace $\W$ of $\Mat$ consisting
of matrices $\omega$ satisfying
\begin{align}\label{eq:threeconditions}
\tr(\omega) & = t_3, \notag\\ 
\tr(\xi_2\omega) & = t_{23}, \\  
\tr(\xi_1\omega) & = t_{13}.\notag   
\end{align}
Since the bilinear pairing 
\begin{align*}
\Mat \times \Mat & \longrightarrow \C \\
(\xi,\eta) & \longmapsto \tr(\xi\eta)
\end{align*}
is nondegenerate, each of the three equations in \eqref{eq:threeconditions}
describes an affine hyperplane in $\Mat$.
We first suppose that 
$(t_1,t_2,t_{12})$ describes an irreducible character, that is
$\kappa(t_1,t_2,t_{12})\neq 2$:
\begin{equation}\label{eq:irreducible}
4 - t_1^2 -t_2^2 - t_{12}^2 + t_1t_2t_{12} \neq 0. 
\end{equation}
Our goal will be to find an element 
$\xi_3\in\W$ such that $\det(\xi_3) = 1$.

\begin{lemma}
There exist $\xi_1,\xi_2\in\SLtC$ satisfying \eqref{eq:initial} 
such that $\W$ is a (nonempty) affine line.
\end{lemma}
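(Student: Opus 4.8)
The plan is to recognize $\W$ as the solution set of three inhomogeneous linear equations on the four-dimensional space $\Mat$ and to check that the three defining functionals are linearly independent; a system of three independent linear equations on a four-dimensional space cuts out an affine line. First I would apply Theorem~\ref{thm:vogt} to produce matrices $\xi_1,\xi_2\in\SLtC$ satisfying \eqref{eq:initial}. The three conditions \eqref{eq:threeconditions} defining $\W$ read $\tr(\Id\,\omega)=t_3$, $\tr(\xi_2\omega)=t_{23}$, and $\tr(\xi_1\omega)=t_{13}$, so the relevant linear functionals on $\Mat$ are the ones associated, via the trace form, to the matrices $\Id,\xi_2,\xi_1$.

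The key reduction uses the nondegeneracy of the trace pairing $(\alpha,\beta)\mapsto\tr(\alpha\beta)$ on $\Mat$. Nondegeneracy means the assignment $M\mapsto\bigl(\omega\mapsto\tr(M\omega)\bigr)$ is a linear isomorphism of $\Mat$ onto its dual. Consequently the three functionals occurring in \eqref{eq:threeconditions} are linearly independent \emph{if and only if} the matrices $\Id,\xi_1,\xi_2$ are linearly independent in $\Mat$. This is where I would invoke the irreducibility hypothesis \eqref{eq:irreducible}, namely $\kappa(t_1,t_2,t_{12})\neq 2$: by Proposition~\ref{prop:irreducibility}, condition \eqref{item:algebrabasis}, the set $\{\Id,\xi_1,\xi_2,\xi_1\xi_2\}$ is a basis for $\Mat$, so in particular its first three members are linearly independent.

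Given this independence, the linear map
\begin{equation*}
\Mat \longrightarrow \C^3, \qquad
\omega \longmapsto \bigl(\tr(\omega),\,\tr(\xi_2\omega),\,\tr(\xi_1\omega)\bigr)
\end{equation*}
is surjective, with one-dimensional kernel (since $\dim_\C\Mat=4$). The fiber over the prescribed value $(t_3,t_{23},t_{13})$ is therefore nonempty and is a coset of this one-dimensional kernel, that is, an affine line; this fiber is exactly $\W$, proving the Lemma. The only substantive point is the linear independence of $\Id,\xi_1,\xi_2$, which I expect to be the main obstacle in the sense that everything hinges on it, but it is resolved immediately by the full-basis criterion \eqref{item:algebrabasis} for irreducibility; the remaining dimension count is routine linear algebra.
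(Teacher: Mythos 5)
Your proposal is correct and takes essentially the same route as the paper: both arguments reduce, via the nondegeneracy of the trace pairing on $\Mat$, to the linear independence of $\{\Id,\xi_1,\xi_2\}$, which forces the three conditions of \eqref{eq:threeconditions} to be independent and hence cut out a nonempty affine line. The only (inessential) difference is how that independence is justified — the paper gives a short direct argument (if $\xi_1$ were a combination of $\Id$ and $\xi_2$, an eigenvector line of $\xi_2$ would be invariant under $\langle\xi_1,\xi_2\rangle$, contradicting irreducibility), whereas you cite the basis criterion \eqref{item:algebrabasis} of Proposition~\ref{prop:irreducibility}, which is equally valid since $\kappa(t_1,t_2,t_{12})\neq 2$ gives condition \eqref{item:trcomm} and hence \eqref{item:algebrabasis}.
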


\begin{proof}
Since $\kappa(t_1,t_2,t_{12})\neq 2$,
Proposition~\ref{prop:irreducibility}, 
\eqref{item:trcomm} %
implies that
the pair $\xi_1,\xi_2$ generates an irreducible representation.

We claim that $\{\Id,\xi_1,\xi_2\}$ is a linearly independent subset of the
$4$-dimensional vector space $\Mat$. Otherwise the nonzero element
$\xi_1$ is a linear combination of $\xi_2$ and $\Id$. Let $v\neq 0$ be an
eigenvector of $\xi_2$. Then the line $(v)$ spanned by $v$
is invariant under $\xi_1$ as well, and hence under the
group generated by $\xi_1$ and $\xi_2$. This contradicts irreducibility
of the representation generated by $\xi_1$ and $\xi_2$.

Since $\{\Id,\xi_1,\xi_2\}$ is linearly independent,
the three linear conditions of \eqref{eq:threeconditions} are independent.
Hence $\W\subset\Mat$ is an affine line.
\end{proof}
\noindent 
Let $\omega_0,\omega_1\in\W$ be distinct elements in this line. Then the function
\begin{align*}
\C &\longrightarrow \C \\
 s &\longmapsto \det \big(s \omega_1 + (1-s) \omega_0\big) 
\end{align*}
is polynomial of degree $\le 2$, and is thus onto unless it is constant. 

We shall show that this map is onto, and therefore $\W\cap \SLtC \neq\emptyset$.
The desired matrix $\xi_3$ will be an element of $\W\cap \SLtC$.

\begin{lemma} Let $\omega_0,\omega_1\in\Mat$. Then
\begin{align*}
\det \big(s \omega_1 + (1-s) \omega_0\big) & =
\det\big(\omega_0  + s (\omega_1-\omega_0)\big) \\ 
& = \det(\omega_0)  
 +\;  s\; \bigg( \tr(\omega_0)\tr(\omega_1-\omega_0)-\tr\big(\omega_0(\omega_1-\omega_0)\big)\bigg) \\
& \qquad \qquad\qquad  +\,s^2\; \det(\omega_1-\omega_0)
\end{align*}
\end{lemma}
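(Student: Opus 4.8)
The first equality is the trivial rewriting $s\omega_1 + (1-s)\omega_0 = \omega_0 + s(\omega_1 - \omega_0)$, so I would concentrate entirely on the second. Setting $A = \omega_0$ and $B = \omega_1 - \omega_0$, the claim becomes the polarization identity
\begin{equation*}
\det(A + sB) = \det(A) + s\big(\tr(A)\tr(B) - \tr(AB)\big) + s^2\det(B)
\end{equation*}
for arbitrary $A,B \in \Mat$. The plan is to express $\det$ through traces and then expand the right-hand side in powers of $s$.

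The tool I would use is the scalar identity
\begin{equation*}
\det(M) = \tfrac12\big(\tr(M)^2 - \tr(M^2)\big), \qquad M \in \Mat,
\end{equation*}
which follows immediately from the Cayley-Hamilton theorem \eqref{eq:CH}: taking the trace of $M^2 - \tr(M)\,M + \det(M)\Id = 0$ and using $\tr(\Id)=2$ gives $\tr(M^2) - \tr(M)^2 + 2\det(M) = 0$.

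Applying this with $M = A+sB$, I would expand the two trace terms separately. By linearity,
\begin{equation*}
\tr(A+sB)^2 = \tr(A)^2 + 2s\,\tr(A)\tr(B) + s^2\tr(B)^2,
\end{equation*}
and since $(A+sB)^2 = A^2 + s(AB+BA) + s^2 B^2$ together with $\tr(AB)=\tr(BA)$ gives
\begin{equation*}
\tr\big((A+sB)^2\big) = \tr(A^2) + 2s\,\tr(AB) + s^2\tr(B^2).
\end{equation*}
Subtracting and halving, the coefficients of $1$, $s$, $s^2$ become $\tfrac12(\tr(A)^2-\tr(A^2))$, $\tr(A)\tr(B)-\tr(AB)$, and $\tfrac12(\tr(B)^2-\tr(B^2))$. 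Recognizing the outer two as $\det(A)$ and $\det(B)$ by the same identity, and substituting back $A=\omega_0$, $B=\omega_1-\omega_0$, produces exactly the stated expression, with linear coefficient $\tr(\omega_0)\tr(\omega_1-\omega_0) - \tr\big(\omega_0(\omega_1-\omega_0)\big)$.

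I do not expect a genuine obstacle. The one conceptual point is that $\det$ is a quadratic form on $\Mat$, so $\det(A+sB)$ is automatically a quadratic polynomial in $s$ whose constant and leading coefficients must be $\det(A)$ and $\det(B)$; all the content sits in the linear cross-term, which is the associated symmetric bilinear form and is pinned down by the one-line polarization above. A direct expansion of $\det(A+sB)$ in matrix entries would also establish the formula, but it is messier and obscures why the cross-term takes this symmetric shape.
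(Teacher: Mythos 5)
Your proof is correct and follows essentially the same route as the paper: the paper likewise invokes the identity $\det(\omega) = \tfrac12\big(\tr(\omega)^2 - \tr(\omega^2)\big)$ (its equation \eqref{eq:detmat}) with $\omega = \omega_0 + s(\omega_1-\omega_0)$ and expands by linearity of the trace. Your write-up merely spells out the coefficient collection that the paper leaves implicit, and your closing remark that $\det$ is a quadratic form pinning down everything but the polarization cross-term is a nice conceptual gloss consistent with the paper's intent.
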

\begin{proof}
Clearly
\begin{equation}\label{eq:trlinear}
\tr\big(\omega_0  \,+\; s\, (\omega_1-\omega_0)\big)  =
\tr(\omega_0) \,+\; s \, \tr(\omega_1-\omega_0).
\end{equation}
Now
\begin{equation}\label{eq:detmat}
\det(\omega) = \frac{\tr(\omega)^2-\tr(\omega^2)}2 
\end{equation}
whenever $\omega\in\Mat$.
Now apply \eqref{eq:detmat} to \eqref{eq:trlinear}
taking 
\begin{equation*}
\omega = \omega_0 + s (\omega_1-\omega_0) 
\end{equation*}
\end{proof} 
\noindent
Thus the restriction $\det|_\W$ is constant only if $\det(\omega_1-\omega_0) = 0$.

Choose a solution $\xi$ of $\xi + \xi^{-1} = t_{12}$. 
Work in the slice
\begin{equation*}
\xi_1 = \bmatrix t_1 & -1 \\ 1 & 0 \endbmatrix,\;
\xi_2 = \bmatrix 0 & \xi \\ -\xi^{-1} & t_2 \endbmatrix. %%%%
\end{equation*}
The matrix $\omega_0\in\Mat$ defined by:
\begin{equation*}
\omega_0  = 
\bmatrix t_3 & 
\big( (t_{13}-t_1t_3)\xi + t_{23}\big)\xi/(\xi^2-1) \\
\big( (t_{13}-t_1t_3) + t_{23}\xi\big)/(\xi^2-1) & 0 
\endbmatrix, 
\end{equation*}
satisfies  \eqref{eq:threeconditions}.
Any other $\omega\in\W$ must satisfy
\begin{align}\label{eq:threelinearconditions}
\tr\big(\omega-\omega_0\big) & = 0, \notag\\ 
\tr\big(\xi_2(\omega-\omega_0)\big) & = 0, \\  
\tr\big(\xi_1(\omega-\omega_0)\big) & = 0.\notag   
\end{align}
\begin{lemma} 
Any solution $\omega-\omega_0$ of \eqref{eq:threelinearconditions} is a multiple
of
\begin{equation*}
\Lie(\xi_1,\xi_2) = \xi_1\xi_2 - \xi_2\xi_1 =
\bmatrix \xi^{-1} - \xi & -t_2 + t_1\xi \\
-t_2 + \xi^{-1}t_1 & 
 \xi  - \xi^{-1}  \endbmatrix.
\end{equation*}
\end{lemma}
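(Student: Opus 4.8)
The plan is to recognize the three scalar equations in \eqref{eq:threelinearconditions} as orthogonality relations for the nondegenerate symmetric bilinear \emph{trace form} $(A,B)\mapsto\tr(AB)$ on the four-dimensional space $\Mat$. Writing $M := \omega-\omega_0$, the three conditions assert precisely that $\tr(M) = \tr(\Id\, M) = 0$, $\tr(\xi_2 M) = 0$ and $\tr(\xi_1 M) = 0$; in other words, $M$ lies in the orthogonal complement $\{\Id,\xi_1,\xi_2\}^\perp$ with respect to the trace form. The whole lemma then reduces to computing the dimension of this complement and exhibiting $\Lie(\xi_1,\xi_2)$ inside it.

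First I would invoke the preceding lemma, which shows --- using irreducibility, since $\kappa(t_1,t_2,t_{12})\neq 2$ --- that $\{\Id,\xi_1,\xi_2\}$ is linearly independent in $\Mat$. Because the trace form is nondegenerate on the four-dimensional space $\Mat$, the orthogonal complement of a three-dimensional subspace is exactly one-dimensional. Hence the solution set of \eqref{eq:threelinearconditions} is a line, and it suffices to produce one nonzero solution spanning it.

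Next I would check that $\Lie(\xi_1,\xi_2)$ is such a solution. Its trace vanishes since $\tr(\xi_1\xi_2) = \tr(\xi_2\xi_1)$. The remaining two conditions follow from cyclic invariance of the trace, exactly as in the earlier computation of $\tr\big(\xi\,\Lie(\xi,\eta)\big)$ for the Lie product: $\tr\big(\xi_1\Lie(\xi_1,\xi_2)\big) = \tr\big(\xi_1(\xi_1\xi_2)\big) - \tr\big((\xi_1\xi_2)\xi_1\big) = 0$, and symmetrically $\tr\big(\xi_2\Lie(\xi_1,\xi_2)\big) = 0$. Finally $\Lie(\xi_1,\xi_2)\neq 0$: since $\xi_1,\xi_2$ generate an irreducible representation, Proposition~\ref{prop:irreducibility} together with \eqref{eq:twokindsofcommutator} gives $\det\big(\Lie(\xi_1,\xi_2)\big)\neq 0$, so the Lie product is in fact invertible, in particular nonzero. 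Therefore $\Lie(\xi_1,\xi_2)$ spans the one-dimensional solution line, and every solution $\omega-\omega_0$ is a scalar multiple of it.

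The explicit matrix displayed in the statement is then recovered by direct multiplication of the slice matrices: computing $\xi_1\xi_2$ and $\xi_2\xi_1$ and subtracting yields the stated entries. This last step is a routine calculation; the only genuine content lies in the dimension count, which hinges on nondegeneracy of the trace form and the linear independence of $\{\Id,\xi_1,\xi_2\}$ --- and these are precisely where the irreducibility hypothesis $\kappa(t_1,t_2,t_{12})\neq 2$ enters. I expect no real obstacle beyond being careful that the complement is \emph{exactly} one-dimensional rather than larger, which is guaranteed by that linear independence, so that the conclusion is ``a multiple of $\Lie(\xi_1,\xi_2)$'' rather than merely ``contained in a larger space.''
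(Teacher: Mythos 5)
Your proposal is correct and follows essentially the same route as the paper: both arguments read \eqref{eq:threelinearconditions} as orthogonality to $\Id,\xi_1,\xi_2$ with respect to the nondegenerate trace form, use the linear independence of $\{\Id,\xi_1,\xi_2\}$ (coming from irreducibility, $\kappa(t_1,t_2,t_{12})\neq 2$) to conclude the solution space is one-dimensional, and then verify by cyclic invariance of the trace that $\Lie(\xi_1,\xi_2)$ is a nonzero element of it. If anything, you are slightly more explicit than the paper on the nonvanishing of the Lie product, which you justify via \eqref{eq:twokindsofcommutator} where the paper simply asserts it.
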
 
\begin{proof} 
The first equation in \eqref{eq:threelinearconditions} 
asserts that $\omega-\omega_0$ lies in the subspace $\slt$, upon which the
trace form is nondegenerate. The second and third equations assert
that $\omega-\omega_0$ is orthogonal to $\xi_1$ and $\xi_2$. By \eqref{eq:irreducible},
$\xi_1,\xi_2$ and $\Id$ are linearly independent in $\Mat$, so the solutions
of \eqref{eq:threelinearconditions} form a one-dimensional linear subspace.
The Lie product
\begin{equation*}
\Lie(\xi_1,\xi_2) = \xi_1 \xi_2 - \xi_2 \xi_1  
\end{equation*}
is nonzero and lies in $\slt$. Furthermore,
for $i=1,2$, 
\begin{equation*}
\tr( \xi_i \xi_1 \xi_2) = \tr( \xi_i \xi_2 \xi_1)
\end{equation*}
implies that $\Lie(\xi_1,\xi_2)$ is orthogonal to $\xi_1$ and $\xi_2$. The lemma follows.
\end{proof} 
\noindent
Parametrize $\W$  explicitly as
%\begin{equation*}
$
\omega \;=\; \omega_0 \,+\, s\, \Lie(\xi_1,\xi_2).
$
%\end{equation*}
By \eqref{eq:twokindsofcommutator},
\begin{equation*}
\det\big(\Lie(\xi_1,\xi_2)\big) = 4 - (t_1^2 +t_2^2 +t_{12}^2 - t_1t_2t_{12}) =
2 - \kappa(t_1,t_2,t_{12}) \neq 0. 
\end{equation*}
By \eqref{eq:irreducible}, the polynomial
\begin{equation*}
\W \xrightarrow{\det} \C 
\end{equation*}
is nonconstant, and hence onto.
Taking 
\begin{equation*}
\omega_1\in(\det|_\W^{-1})(1),  
\end{equation*}
the proof of Proposition~\ref{prop:onto} is complete assuming
\eqref{eq:irreducible}.

The case when $4 - t_1^2 -t_2^2 - t_{12}^2 + t_1t_2t_{12} = 0$ remains.
Then 
\begin{equation*}
t_i = a_i + (a_i)^{-1} 
\end{equation*}
for $i=1,2$, for some $a_1,a_2\in\C^*$.
Then either
\begin{equation}\label{eq:bothplus}
t_{12} = a_1a_2 + (a_1a_2)^{-1} 
\end{equation}
or
\begin{equation}\label{eq:plusminus}
t_{12} = a_1(a_2)^{-1} + (a_1)^{-1}a_2. 
\end{equation}
In the first case \eqref{eq:bothplus}, set
\begin{align*}
\xi_1 &:=  \bmatrix a_1 & t_{13} - a_1 t_3 \\ 0 & (a_1)^{-1} \endbmatrix, \\
\xi_2 &:=  \bmatrix a_2 & t_{23} - a_2 t_3  \\ 
                   0  & (a_2)^{-1} \endbmatrix, \\
\xi_3 &:=  \bmatrix t_3 & -1 \\ 1 & 0 \endbmatrix
\end{align*}
and in the second case \eqref{eq:plusminus}, set
\begin{align*}
\xi_1 &:=  \bmatrix (a_1)^{-1} & t_{13} - (a_1)^{-1} t_3 
\\ 0 & a_1 \endbmatrix, \\
\xi_2 &:=  \bmatrix a_2 & t_{23} - a_2 t_3  \\ 
                   0  & (a_2)^{-1} \endbmatrix, \\
\xi_3 &:=  \bmatrix t_3 & -1 \\ 1 & 0 \endbmatrix
\end{align*}
obtaining $(\xi_1,\xi_2,\xi_3)\in\SLtC^3$ 
explicitly solving \eqref{eq:initial} and \eqref{eq:rankthreeequations}.
The proof of Proposition~\ref{prop:onto} is complete.

\subsection{The four-holed sphere.}
\index{four-holed sphere}
Let $\Sigma \approx \Sigma_{0,4}$ be the four-holed sphere, with boundary components 
$A,B,C,D$ subject to the relation
\begin{equation*}
A B C D = \Id. 
\end{equation*}
The fundamental group is freely generated by 
\begin{equation*}
A = X_1, B = X_2, C = X_3
\end{equation*}
which represent three of the boundary components.
The fourth boundary component is represented by an
element
\begin{equation*}
D  := (X_1 X_2 X_3)^{-1},
\end{equation*}
satisfying the relation
\begin{equation*}
A B C D = \Id.
\end{equation*}
The resulting redundant presentation of the free group is:
\begin{equation*}
\pi = \langle A, B, C, D \mid A B C D = \Id \rangle. 
\end{equation*}
The elements 
\begin{align*}
X & := X_1 X_2 \\ 
Y & := X_2 X_3 \\ 
Z & := X_1 X_3 \\ 
\end{align*}
correspond to simple loops on $\Sigma$ separating $\Sigma$ into two $3$-holed spheres.
(Compare Figure~\ref{fig:quadsphere}.)
The (even more redundant) presentation
% \begin{equation*}
% \pi \;=\; \langle A, B, C, D, X, Y, Z  \,\mid\, A B C D = \Id,\, X = A B,\, Y = B C,\, Z = C A \rangle 
% \end{equation*}
\begin{align*}
\pi \;=\; \langle A, B, C, D, X, Y, Z  \,\mid\, & A B C D = \Id, \\
&\; X = A B,\, Y = B C,\, Z = C A \rangle 
\end{align*}

gives regular functions
\begin{align*}
a & = t_1 \\
b & = t_2 \\
c & = t_3 \\
x & = t_{12} \\
y & = t_{23} \\
z & = t_{13} \\
d & = t_{123}.
\end{align*}
generating the character ring.
Using \eqref{eq:sum1} to eliminate $t_{132}$
as in \eqref{eq:eliminate132}, the product relation
\eqref{eq:product1} implies:
\begin{align}
x^2 + y^2 + z^2 + xyz & =  (ab+cd) x \notag \\ 
&  \qquad + (ad+bc)y  \notag \\
&  \qquad \quad + (ac+bd)z  \notag \\
&  \qquad \qquad  + (4 - a^2 - b^2 - c^2 - d^2 - abcd).
\label{eq:definingeq1}
\end{align}
This leads to a presentation of the character ring 
as a quotient of the polynomial ring  
$\C[a,b,c,d,x,y,z]$ by the principal ideal $(\Phi)$ 
generated by 
\begin{align}
\Phi(a,b,c,d;x,y,z) & =x^2 + y^2 + z^2 + xyz  \notag\\ 
& \quad -  (ab+cd) x  - (ad+bc)y   - (ac+bd)z   \notag\\
& \qquad  + a^2 + b^2 + c^2 + d^2 + abcd -4. \label{eq:definingquartic}
\end{align}
Thus the {\em $\SLtC$-character variety\/} is a quartic hypersurface in $\C^7$,
and for fixed boundary traces $(a,b,c,d)\in\C^4$,
the {\em relative $\SLtC$-character variety\/} is the cubic surface in $\C^3$
defined by \eqref{eq:definingeq1}, as was known to Fricke and Vogt.
(Compare Benedetto-Goldman~\cite{Bengol},
Goldman~\cite{erg}, Goldman-Neumann~\cite{goldmanneumann}, 
Cantat-Loray~\cite{CantatLoray} and Cantat~\cite{Cantat}, 
Iwasaki~\cite{Iwasaki}.)

\begin{figure}[h]
\centerline{\epsfxsize=2.0in \epsfbox{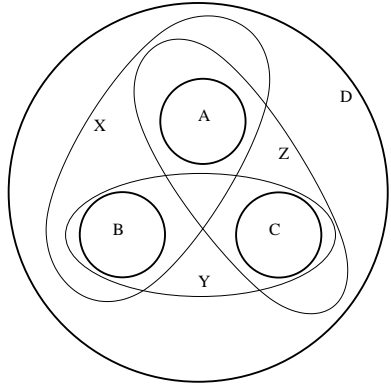}}
\caption{Seven simple curves on $\Sigma_{0,4}$.}
\label{fig:quadsphere}
\end{figure}

\subsubsection*{The Fricke space of $\Sigma_{0,4}$.}
\index{Fricke space of $4$-holed sphere}
We identify the Fricke space $\Fricke(\Sigma_{0,4})$ in terms of 
trace coordinates.
\begin{theorem}\label{thm:FrickeFourHoles}
The Fricke space of a four-holed sphere with boundary
traces $a,b,c,d>2$ is defined by the inequalities in $\R^4\times\R^3$
for $(a,b,c,d)\in\R^4$ and $(x,y,z)\in\R^3$ : %%% colon added

\begin{align*}
\begin{cases}
a,b,c,d & \ge 2, \quad x  < -2, \\ 
F^-, F^+ & > 0  \\
F^- F^+  & = \frac{(x^2 + a^2 + b^2 - a b x - 4) %%%%
(x^2 + c^2 + d^2 - c d x - 4)}{x^2 -4} \\
\end{cases}
\end{align*}
where 
\begin{align*} 
F^- & = 
\sqrt{2-x} \bigg( y - z - \frac{(a-b)(d-c)}{2-x}  \bigg)  - 
\sqrt{-2-x} \bigg( y + z - \frac{(a+b)(d+c)}{-2-x}  \bigg)  \\
F^+ & = 
\sqrt{2-x} \bigg( y - z - \frac{(a-b)(d-c)}{2-x}  \bigg)  + 
\sqrt{-2-x} \bigg( y + z - \frac{(a+b)(d+c)}{-2-x}  \bigg).
\end{align*}
\end{theorem}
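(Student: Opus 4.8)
The plan is to reduce to the three-holed sphere via the separating curve $X = AB$, which cuts $\Sigma_{0,4}$ into two pairs of pants $P_1,P_2$ carrying the boundary data $(a,b,x)$ and $(c,d,x)$, and then to analyze the Fenchel--Nielsen gluing along $X$. First I would record the necessary conditions: for a hyperbolic structure all boundary traces are real with absolute value $\ge 2$, and after normalizing by the $H^1(\Sigma;\mathbb Z/2)$-action I may assume $a,b,c,d\ge 2$. Since $P_1$ has boundary traces $(a,b,x)$ with $a,b\ge 2$, Theorem~\ref{thm:frickechione} forces the triple into the octant $[2,\infty)\times[2,\infty)\times(-\infty,-2]$, so $x<-2$; the same octant serves $P_2$. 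Conversely, given $a,b,c,d\ge 2$ and $x<-2$, both $(a,b,x)$ and $(c,d,x)$ lie in an admissible octant, so by Theorem~\ref{thm:frickechione} each of $P_1,P_2$ is the holonomy of a hyperbolic pants, with $\kappa(a,b,x)>2$ and $\kappa(c,d,x)>2$; equivalently $x^2+a^2+b^2-abx-4>0$ and $x^2+c^2+d^2-cdx-4>0$, the numerator factors of the product formula. These pants share the same boundary length along $X$ (both traces equal $x$), so they may be glued.

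Next I would set up the gluing concretely. Normalizing so that $\rho(X)$ is the diagonal transvection along the imaginary axis $\ell_X\subset\Ht$, I keep $P_1=\langle\rho(A),\rho(B)\rangle$ fixed and move the second pants by the twist flow $\rho(C)\mapsto T_\tau\,\rho(C)\,T_\tau^{-1}$, $\rho(D)\mapsto T_\tau\,\rho(D)\,T_\tau^{-1}$ with $T_\tau=\mathrm{diag}(e^{\tau/2},e^{-\tau/2})$. The key computational claim is that, as affine-linear forms in $(y,z)$, the expressions $F^-$ and $F^+$ are eigen-coordinates for this flow: twisting multiplies them by $e^{\tau/2}$ and $e^{-\tau/2}$ respectively. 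Granting this, $F^-F^+$ is twist-invariant, hence a function of the boundary data and $x$ alone; evaluating at the reference configuration $\tau=0$ (the orthogonal gluing) produces the closed form $\tfrac{(x^2+a^2+b^2-abx-4)(x^2+c^2+d^2-cdx-4)}{x^2-4}$. One checks directly that the quadratic part of $F^-F^+$ in $(y,z)$ equals $4(y^2+z^2+xyz)$, matching that of $\Phi$ in \eqref{eq:definingquartic}; since the twist orbit is the one-dimensional slice $\{\Phi=0\}$ for fixed $(a,b,c,d)$ and $F^-F^+$ is constant on it, the conic $\{F^-F^+=\mathrm{RHS}\}$ and the slice $\{\Phi=0\}$ share a branch and have proportional quadratic parts, so they coincide. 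That slice is a hyperbola whose asymptotes are the lines $F^-=0$ and $F^+=0$; because $\kappa(a,b,x)-2$, $\kappa(c,d,x)-2$ and $x^2-4=(2-x)(-2-x)$ are all positive, $F^-F^+$ is a positive constant, and the two branches are exactly $\{F^->0,\,F^+>0\}$ and $\{F^-<0,\,F^+<0\}$.

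It remains to identify the \emph{geometric} branch. Here I would use the de Sitter/half-plane formalism: each of $\widehat{\rho(A)},\dots,\widehat{\rho(D)}$ determines a half-plane via \eqref{eq:halfplane}, and the disjointness criterion (the unnumbered lemma comparing $\langle v_1,v_2\rangle$ with $\pm 1$) translates embeddedness of the developing image into inequalities on these inner products. At the orthogonal reference gluing I would verify that the half-planes bounded by $\ell_A,\ell_B$ on one side of $\ell_X$ and by $\ell_C,\ell_D$ on the other are coherently nested into an embedded right-angled pattern, and that this holds precisely when $F^->0$ and $F^+>0$. Since both being Fuchsian and the sign conditions $F^\pm>0$ are preserved by the twist flow (which only rescales $F^\pm$ by positive factors), this identification propagates along the whole branch; and since $F^-\to\infty,\ F^+\to 0^+$ as $\tau\to+\infty$ (and oppositely as $\tau\to-\infty$), the twist orbit sweeps out the entire branch, realizing every hyperbolic structure with gluing length determined by $x$. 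The converse is the standard splitting argument: the holonomy of any hyperbolic structure on $\Sigma_{0,4}$ makes $X=AB$ a simple separating geodesic, cutting the surface into hyperbolic pants realized by $P_1,P_2$ and recording a twist, so its trace coordinates land in the $\{F^\pm>0\}$ branch.

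The main obstacle is this last step: pinning down \emph{which} branch is Fuchsian, i.e.\ proving that embeddedness is equivalent to $F^->0,\ F^+>0$ rather than the opposite signs. This is where the geometry genuinely enters, and I expect it to require an honest evaluation of the half-plane inner products $\langle\widehat{\rho(A)},\widehat{\rho(C)}\rangle$ and the like at the reference gluing, in the spirit of \eqref{eq:tracesdot} for the three-holed sphere, combined with the disjointness criterion to certify the correct sign. The subsidiary claim that the twist flow diagonalizes $(F^-,F^+)$ is a direct matrix computation, after which the product identity follows from a single evaluation rather than a full expansion; the remaining bookkeeping --- cusped boundaries where a trace equals $2$, and confirming that $X$ is always represented by an embedded separating geodesic --- is routine.
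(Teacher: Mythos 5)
Your architecture is sound and genuinely different from the paper's in two respects. You derive the product identity $F^-F^+ = \kappa_{a,b}(x)\kappa_{c,d}(x)/(x^2-4)$ dynamically, from twist-equivariance of $F^\pm$ plus a single evaluation at the orthogonal gluing, whereas the paper verifies the purely algebraic identity \eqref{eq:definingeq1a}, rewrites the defining quartic \eqref{eq:definingeq1} as the conic \eqref{eq:conics}, and factors it as \eqref{eq:FactorF}; your route trades a polynomial expansion for a normal-form computation in the spirit of the Fenchel--Nielsen section, and both work. (One computational slip: under conjugation of the $(C,D)$-side by $\mathrm{diag}(e^{\tau/2},e^{-\tau/2})$ the off-diagonal entries scale by $e^{\pm\tau}$, and since the curves $Y=BC$ and $Z=AC$ each meet $X$ twice, the eigen-scaling of $F^\pm$ is $e^{\pm\tau}$, not $e^{\pm\tau/2}$; this does not affect the structure of your argument. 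Your check that the quadratic part of $F^-F^+$ is $4(y^2+z^2+xyz)$ is correct.)

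The genuine gap is exactly the step you yourself flag as ``the main obstacle'': determining \emph{which} branch of the hyperbola is Fuchsian is asserted, not proved --- you only state the expectation that a half-plane inner-product computation at the reference gluing will certify $F^\pm>0$. Since everything beyond the algebraic identity reduces to this sign determination, the proposal as written does not prove the theorem. The paper closes this step with a much lighter device than the one you envision: the relative Euler class is a continuous integer-valued invariant, so it is constant on each connected branch family, and a single explicit evaluation suffices. Taking $\rho(A)=\rho(D)^{-1}$, $\rho(B)=\rho(C)^{-1}$ with $\rho(A)$ unipotent upper-triangular and $\rho(B)$ lower-triangular with entry $x-2$ gives an Euler-class-zero (hence non-Fuchsian) representation with $a=b=c=d=2$, $y=2$, $z=4-x$, for which $F^\pm<0$; hence the Fuchsian locus is the $F^\pm>0$ branch. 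You could graft this example onto your framework to close the gap without ever computing the de Sitter inner products. A second, smaller omission: ``normalizing by $H^1(\Sigma;\Z/2)$'' does not by itself yield a lift to $\SLtR$ with all four boundary traces positive \emph{and} $\trho(A)\trho(B)\trho(C)\trho(D)=\Id$, since the sign of $d$ is not free once those of $a,b,c$ are fixed; the paper establishes this lift via evenness of the Euler class (vanishing of the second Stiefel--Whitney obstruction), and the same Euler-class bookkeeping --- the two pants classes lie in $\{-1,0,1\}$ and sum to $\pm 2$ --- is what underlies $x<-2$. Your derivation of $x<-2$ from Theorem~\ref{thm:frickechione} applied to the two pants is fine and matches the paper in spirit, but it presupposes that sign normalization, so the lifting argument must be supplied rather than waved through.
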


\begin{proof} %%%
For a given hyperbolic structure on $\Sigma$, the holonomy generators
$\rho(A)$, $\rho(B)$, $\rho(C)$, $\rho(D) \in\PSLtR$ are hyperbolic or parabolic. 
Choose lifts \begin{equation*}
\trho(A), \trho(B),\trho(C),\trho(D) \in\SLtR 
\end{equation*}
which have positive trace.
Since $\rho$ is a representation $\pi_1(\Sigma)\longrightarrow\PSLtR$,
\begin{equation*}
\trho(A) \trho(B) \trho(C) \trho(D) = \pm \Id.
\end{equation*} 
We claim that $\trho(A) \trho(B) \trho(C) \trho(D) = \Id.$
Since 
\begin{align*}
\tr\big(\trho(A)\big) & \ge 2 \\
\tr\big(\trho(B)\big) & \ge 2 \\
\tr\big(\trho(C)\big) & \ge 2 \\
\tr\big(\trho(D)\big) & \ge 2,
\end{align*}
each of
$ \trho(A), \trho(B),\trho(C),\trho(D)$ lies in a unique
one-parameter subgroup of $\SLtR$. The corresponding embeddings 
define trivializations of the corresponding flat $\PSLtR$-bundle over 
each component of $\partial\Sigma$ as in Goldman~\cite{thesis,Topcomps}. 
Namely, since each component $\partial_i(\Sigma)$ is a closed $1$-manifold,
lifting a homeomorphism $\R/\Z \longrightarrow \partial_i(\Sigma)$ to
\begin{equation*}
\R \longrightarrow \widetilde{\partial_i(\Sigma)} 
\end{equation*}
the flat bundle over $\partial_i(\Sigma)$ with holonomy $\gamma_i$
lifts to the quotient of the trivial principal $\PSLtR$-bundle
$\R\times \PSLtR$ by the $\Z$-action generated by
\begin{equation*}
(t, g) \longmapsto (t + 1, \gamma_i g).
\end{equation*}
The corresponding trivialization is covered by the $\Z$-equivariant
isomorphism
\begin{equation*}
(t, g )  \longmapsto \bigg(t , \exp\big(-t \log(\gamma_i) g\big)\bigg)
\end{equation*}
where
\begin{equation*}
\bigg\{  \exp\big(t \log(\gamma_i)\big) \bigg\}_{t\in\R} 
\end{equation*}
is the unique one-parameter subgroup of $\PSLtR$ containing
$\gamma_i$ as above.

Since $\chi(\Sigma) = -2$, the Euler class of the representation
$\rho$ equals $-2$ and is even. The obstruction to lifting a representation
to the double covering space 
\begin{equation*}
\SLtR\longrightarrow\PSLtR  
\end{equation*}
is the second Stiefel-Whitney class, which is the reduction of the Euler %%%
class modulo $2$. 
Therefore $\trho$ defines a representation and 
$\trho(A) \trho(B) \trho(C) \trho(D) = \Id $
as claimed.
\index{relative Euler class}

Furthermore, if $\rho$ is a Fuchsian representation, then $X$ is
represented by a unique closed geodesic on $\Sigma$ and
\begin{equation*}
\rho(X)=\rho(A)\rho(B)  
\end{equation*}
is hyperbolic. The relative Euler classes of
the restriction of $\rho$ to the subsurfaces complementary to $X$ sum
to $\pm 2$. Since they are constrained to equal $-1,0,+1$, they both
must be equal to $+1$ or both equal to
$-1$. (Compare~\cite{thesis,Topcomps}.)  It follows that the trace $x
= \tr(X) < -2$.

We study \eqref{eq:definingeq1} using the following identity:
\begin{align}\label{eq:definingeq1a} 
4 &  (4 - x^2)  \bigg\{ x^2 + y^2 + z^2 + xyz \notag
\\ & \quad - \big( (ab+cd) x  + (ad+bc)y  + (ac+bd)z \big) \notag  \\
 & \qquad + \big( a^2 + b^2 + c^2 + d^2 + abcd - 4\big) \bigg\} \notag \\
&  = (2 + x) \bigg\{ (y-z)(2-x) + (a-b)(c-d) \bigg\}^2 \notag \\
&  \quad + (2 - x)  \bigg\{ (y+z)(2+x) - (a+b)(c+d) \bigg\}^2 \notag \\ %%%
& \qquad - 4 
\kappa_{a,b}(x) \kappa_{c,d}(x).
\end{align}
where 
\begin{equation}\label{eq:defkappa}
\kappa_{p,q}(x) := x^2 + p^2 + q^2 - p q x - 4.
\end{equation}
This function equals $\kappa(p,q,x)-2$, where $\kappa$
is the  commutator trace function defined in \eqref{eq:commutator}. 

When $x\neq \pm 2$, rewrite \eqref{eq:definingeq1} using
\eqref{eq:definingeq1a} as follows:

\begin{align}  
& {\frac {2+x} 4} {\left( (y+z) - {\frac
{( a + b)(d + c)} {2+x}} \right)} ^2  \notag\\
& \qquad + 
{\frac {2-x} 4} {\left( (y-z) - {\frac
{( a - b)(d - c)} {2-x}} \right)}^2 . \notag \\ 
& \qquad \qquad =
{\frac {\kappa_{a,b}(x)\kappa_{c,d}(x)}{4-x^2}} 
\label{eq:conics}  
\end{align}
(Compare (3-3) of Benedetto-Goldman~\cite{Bengol}.)

We fix $a,b,c,d \ge 2$. As $x$ varies, \eqref{eq:conics} defines
a family of conics parametrized by $x$.
For $x<-2$, this conic is a hyperbola, denoted  $H_{a,b,c,d;x}$.
The solutions of \eqref{eq:conics} for $a,b,c,d\ge 2$ and $x<-2$
fall into two connected components corresponding to the two components
of the hyperbolas.

We explicitly describe these components. First observe that if $a,b\ge 2$ and
$x<-2$, then 
\begin{equation*}
\kappa_{a,b}(x) > 16,\qquad \kappa_{c,d}(x) > 16,\qquad 4 - x^2 < 0 
\end{equation*}
so the left-hand side \eqref{eq:conics} is negative.
For notational simplicity denote its opposite by $k = \kappa_{a,b,c,d;x}$:
\begin{equation*}
k = 
\kappa_{a,b,c,d;x} :=  
\frac {\kappa_{a,b}(x)\kappa_{c,d}(x)}{x^2-4} \;>\; 0.
\end{equation*}
Rewrite 
\eqref{eq:conics} as:
\begin{align*}  
& {\frac {2-x} 4} {\left( (y-z) - {\frac{( a - b)(d - c)} {2-x}} \right)}^2 \\
& \qquad - {\frac {-2-x} 4} {\left( (y+z) - {\frac{( a + b)(d + c)} {2+x}} \right)} ^2  
= k
\end{align*}
Factoring the left-hand side of this equation, 
rewrite \eqref{eq:conics} as:
\begin{equation}\label{eq:FactorF}
F^+(y,z)  \,   
F^-(y,z) \; = \; 
k
\end{equation}
where the functions $F^\pm(y,z)$ are defined as:
\begin{align}
F^-(y,z)  = 
F_{a,b,c,d;x}^-(y,z)  & := 
\sqrt{2 - x} \bigg( y - z - \frac{(a-b)(d-c)}{2-x}  \bigg)  \notag \\ 
& \quad - \sqrt{-2-x}  \bigg( y + z - \frac{(a+b)(d+c)}{-2-x}  \bigg). \label{eq:Fminus}\\
F^+(y,z)  = 
F_{a,b,c,d;x}^+(y,z)  & := 
\sqrt{2 - x} \bigg( y - z - \frac{(a-b)(d-c)}{2-x}  \bigg) \notag \\
& \quad +  \sqrt{-2-x}  \bigg( y + z - \frac{(a+b)(d+c)}{-2-x}  \bigg)  \label{eq:Fplus}
\end{align}
For fixed $a,b,c,d\ge 2$ and $x< -2$, the functions $F^\pm(y,z)$ are affine functions of $y,z$.

We identify each of the two components of the hyperbola $H_{a,b,c,d;x}$ defined by \eqref{eq:FactorF}.
One component, denoted $H^+_{a,b,c,d;x}$, is the intersection of $H_{a,b,c,d;x}$ with the
open half plane
\begin{equation*}
F_{a,b,c,d;x}^-(y,z) > 0.
\end{equation*}
Equivalently, $H^+_{a,b,c,d;x}$ is the intersection of $H_{a,b,c,d;x}$ with the open half-plane
\begin{equation*}
F_{a,b,c,d;x}^+(y,z) > 0.
\end{equation*}
Similarly the other component $H^-_{a,b,c,d;x}$ is the intersection of $H_{a,b,c,d;x}$ with the 
open half-plane
\begin{equation*}
F_{a,b,c,d;x}^-(y,z) < 0,
\end{equation*}
or, equivalently,
\begin{equation*}
F_{a,b,c,d;x}^+(y,z) < 0.
\end{equation*}
The union of these hyperbola components correspond to values of the relative
Euler class (compare \cite{Topcomps}) as follows. Either
\begin{equation*}
H^+ := \bigcup_{a,b,c,d\ge 2, x < -2}  H^+_{a,b,c,d;x}
\end{equation*}
or
\begin{equation*}
H^- := \bigcup_{a,b,c,d\ge 2, x < -2}  H^-_{a,b,c,d;x}
\end{equation*}
corresponds to characters of representations with relative Euler class
$0$. The other component corresponds to representations with relative
Euler class $\pm 1$. There is no way to distinguish between relative
Euler class $+1$ and $-1$ since the characters are equivalence classes under 
the group $\PGLtR$, which does not preserve orientation. 
To determine which one is 
which, it suffices to check one single example and use continuity of the integer-valued
relative Euler class.

Here is an example whose relative Euler class is zero. 
Choose
\begin{equation*}
\rho(A) = \rho(D)^{-1}, \qquad \rho(B) = \rho(C)^{-1} 
\end{equation*}
so that the relation
\begin{equation*}
\rho(A)\rho(B)\rho(C)\rho(D) = \Id 
\end{equation*}
is trivially satisfied. Clearly such a 
representation depends only on the pair $\rho(A),\rho(B)$ 
which is arbitrary. Furthermore we restrict the boundary traces
to satisfy:
\begin{equation*}
a = \tr\big(\rho(A)\big) \ge 2, \qquad
b = \tr\big(\rho(B)\big) \ge 2. 
\end{equation*}
This space is connected and contains the character of the 
trivial representation, whose relative Euler class is zero.
Now consider the specific example:
\begin{equation*}
\rho(A) := \bmatrix 1 & 1 \\ 0 & 1 \endbmatrix, \qquad
\rho(B) := \bmatrix 1 & 0 \\ x-2 & 1 \endbmatrix
\end{equation*}
where $x<-2$ is arbitrary. Then $a = b = c = d = 2$ and
\begin{equation*}
y = 2, \qquad z = 4 - x.  
\end{equation*}
In particular $y-z < 0 $ and $y + z > 0$. 
and therefore \eqref{eq:Fminus} implies
\begin{equation*}
F_{a,b,c,d;x}^\pm < 0 
\end{equation*}
proving that this representation has a character in 
$H^-$, proving Theorem~\ref{thm:FrickeFourHoles}.
\end{proof}

\subsection{The two-holed torus.}
\index{two-holed torus}

\begin{figure}[h]
\centerline{\epsfxsize=2.0in \epsfbox{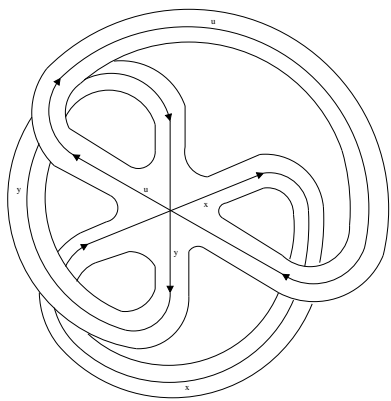}}
\caption{A ribbon graph representing a 2-holed torus}
\label{fig:twoholedtorus}
\end{figure}

The two-holed torus admits a redundant geometric presentation corresponding to the ribbon graph
depicted in Figure~\ref{fig:twoholedtorus}:
\begin{equation*}
\langle A, B, U, X, Y \mid  A = U X Y, \quad B =  U Y X \rangle.
\end{equation*}
The $1$-handles correspond to free generators 
$U,X,Y$ and the boundary components correspond to the triple products
\begin{align*}
A & = U X Y, \\ 
B &=  U Y X.
\end{align*}
Since the curves corresponding to $X,Y,U$ in Figure~\ref{fig:twoholedtorus}
intersect transversely at the basepoint, the double products  %%% comma inserted
\begin{align*}
V & = U X, \\ 
W & = U Y, \\
Z & = X Y
\end{align*}
are represented by simple loops as well. The 
Sum Relation \eqref{eq:sum1} and the Product Relation 
\eqref{eq:product1} imply that the relative character variety for
$\Sigma_{1,2}$ is defined by 
\begin{align}
a + b & = y v + x w + z u - u x y \notag \\
a b & = x^2 + y^2 + u^2 + v^2 + w^2 + z^2 - x y z - y u w - u x v 
+ v w z - 4. 
\label{eq:twoholedtoruschars}
\end{align}

Button~\cite{Button} gives defining inequalities for the Fricke space,
where the boundary components are mapped to parabolics, as follows.
First consider the $\R$-locus of the character variety, defined as the
set of all $(a,b;u,x,y,z,v,w)\in\R^8$ satisfying  %%% 8
\eqref{eq:twoholedtoruschars} above, and
\begin{equation*}
a = b = 2. 
\end{equation*}
The regular neighborhood of the union of the loops corresponding to a pair of $X,Y,U$ is
an embedded one-holed torus. For example corresponding to the pair $X,Y$ is a one-holed
torus whose boundary corresponds to the commutator $[X,Y]$, and cuts $\Sigma$ into
the one-holed torus and a three-holed sphere.
This commutator has trace $\kappa(x,y,z)$. Similarly the pair $Y,U$ determines a separating
curve whose corresponding trace function is $\kappa(y,u,w)$ and
the pair $U,X$ determines a separating
curve whose corresponding trace function is $\kappa(u,x,v)$.
The preceding discussions of the Fricke spaces of the one-holed torus and the three-holed sphere imply: %%%%
\begin{equation*}
\kappa(x,y,z) \;<\; -2,\quad \kappa(y,u,w) \;<\; -2,\quad \kappa(u,x,v) \;<\; -2.
\end{equation*}
Button~\cite{Button} shows that these necessary conditions are sufficient,
thus obtaining an explicit description of the Fricke-Teichm\"uller space of $\Sigma_{1,2}$
in terms of traces. (The reader should draw these curves on the ribbon graph
depicted in Figure~\ref{fig:twoholedtorus}.)

\subsection{Orientable double covering spaces.}
\index{orientable double covering spaces}
Let $\Sigma$ be a nonorientable surface of $\chi(\Sigma) = -1$ and
$\hS\xrightarrow{\hP}\Sigma$ be its orientable covering space.
There are two cases:
\begin{itemize}
\item 
$\Sigma\approx C_{0,2}$ and $\hS\approx \Sigma_{0,4}$;
\item 
$\Sigma\approx C_{1,1}$ and $\hS\approx \Sigma_{1,2}$.
\end{itemize}
Then $\pi_1(\Sigma)\cong \F_2$ and 
$\pi_1(\hS) \cong \F_3$. Denote a set of free generators
of $\pi_1(\Sigma)$ by $X_1,X_2$ which correspond to
orientation-reversing loops on $\Sigma$. The image of
\begin{equation*}
\pi_1(\hS) \stackrel{\hP_*}\hookrightarrow \pi_1(\Sigma) = \langle X_1,X_2\rangle 
\end{equation*}
equals the kernel of the homomorphism
\begin{align*}
\pi_1(\Sigma) & \longrightarrow \{\pm 1 \}\\
X_1 & \longmapsto -1 \\ 
X_2 & \longmapsto -1,
\end{align*}
which is freely generated by, for example,
\begin{align*}
Y_1 &  = X_1^2 \\
Y_2 &  = X_1^{-1} X_2^{-1} \\
Y_3 &  = X_2^2.
\end{align*}
The deck transformation of 
$\hS\xrightarrow{\hP}\Sigma$ 
is induced by the restriction of the inner automorphism $\Inn(X_1)$
to $(\hP)_*\big(\pi_1(\Sigma)\big)$:
\begin{align*}
Y_1 & \longmapsto Y_1 \\
Y_2 & \longmapsto Y_3^{-1} Y_2^{-1} Y_1^{-1} \\
Y_3 & \longmapsto Y_1 Y_2 Y_3 Y_2^{-1} Y_1^{-1}. %%%
\end{align*}
The character ring of $\pi_1(\Sigma)\cong \F_2$ is the polynomial
ring $\C[x_1,x_2,x_{12}]$. The
character ring of $\pi_1(\hS)\cong \F_3$ is the quotient
\begin{equation*}
\C[y_1,y_2,y_3,y_{123},y_{12},y_{23},y_{13}]/(\mathfrak{I})
\end{equation*}
where $(\mathfrak{I})$ is the principal ideal generated by %%%
\begin{align}
\Phi(y_1,y_2,y_3,y_{123} & ,y_{12},y_{23},y_{13}) \notag \\
\; =\; 
y_{12}^2 +  & y_{13}^2  + y_{23}^2   + y_{12}y_{13}y_{23}  \notag\\ 
& -  (y_1y_2+y_3y_{123}) y_{12}   - (y_1y_{123}+ y_2y_3)y_{23}  \notag \\
& \quad - (y_1y_3+y_2y_{123})y_{13}  + y_1^2 + y_2^2 + y_3^2 + y_{123}^2 
+ y_1y_2y_3y_{123} -4 %%%%
\end{align}
%%%%
% \begin{align}
% \Phi(y_1,y_2,y_3,y_{123};y_{12},y_{23},y_{13})
% & =y_{12}^2 + y_{13}^2 + y_{23}^2 + y_{12}y_{13}y_{23}  \notag\\ 
% & \quad -  (y_1y_2+y_3y_{123}) y_{12}  - (y_1y_{123}+ y_2y_3)y_{23}  - (y_1y_3+y_2y_{123})y_{13}  \notag\\
% & \qquad  + y_1^2 + y_2^2 + y_3^2 + y_{123}^2 + y_1y_2y_3y_{123} -4 %%%%
% \end{align}
where
$\Phi$ is the polynomial \eqref{eq:definingquartic}.
%%%%
The automorphism 
\begin{equation*}
\Inn(X_1)|_{(\hP)_*\big(\pi_1(\Sigma)\big)} 
\end{equation*}
corresponding to the deck transformation induces the involution
 of character rings: 
\begin{align*}
\RR_3 & \leftrightarrow \RR_3 \\
y_1 & \leftrightarrow y_1 \\
y_2 & \leftrightarrow y_{123} \\
y_3 & \leftrightarrow y_3 \\
y_{12} & \leftrightarrow y_{23} \\
y_{13} & \leftrightarrow y_1y_3 - y_{13} - y_{12}y_{23} + y_{123}y_2 \\
y_{23} & \leftrightarrow y_{12} \\
y_{123} & \leftrightarrow y_2 
\end{align*} 
The covering space $\hP$ induces the embedding of character rings
\begin{align*}
\RR_2 & \hookrightarrow \RR_3 \\
y_1 & \longmapsto x_1^2 -2 \\
y_2 & \longmapsto x_{12} \\
y_3 & \longmapsto x_2^2-2 \\
y_{12} & \longmapsto x_1x_2-x_{12} \\
y_{13} & \longmapsto x_1 x_2 x_{12} - x_1^2 - x_2^2 + 2 \\
y_{23} & \longmapsto x_1x_2-x_{12} \\
y_{123} & \longmapsto x_{12}
\end{align*} 

The two topological types for $\Sigma$ differ by their choice of {\em peripheral structure:\/}
\begin{itemize}
\item $\Sigma \approx C_{0,2}$ has two boundary components corresponding to: 
\begin{equation*}
\delta_1 := Y_2 = X_1^{-1} X_2^{-1}, \;  
\delta_2 = Y_1 Y_2 = X_1X_2^{-1}. 
\end{equation*}
\item $\Sigma \approx C_{1,1}$ has one boundary component corresponding to 
\begin{equation*}
\delta := Y_1 Y_3 = X_1^2 X_2^2. 
\end{equation*}
\end{itemize}

\subsection{The two-holed cross-surface.}\label{sec:C02}
The fundamental group of the two-holed cross-surface $C_{0,2}$
is free of rank two, with presentation
\begin{equation*}
\pi_1(C_{0,2}) \;:=\; \langle U,V,W,W'\; \mid\; W = U V,\, W' = V^{-1} U \rangle \cong \F_2.
\end{equation*}
The free generators $U,V$ correspond to orientation-reversing simple curves
on $C_{0,2}$ and $W,W'$ correspond to the components of $\partial C_{0,2}$.
 
The orientable double covering-space $\widehat{C_{0,2}}\longrightarrow C_{0,2}$ 
is connected, has four boundary components (since $C_{0,2}$ has two boundary %%% parenthesis deleted
components, each of which is orientable) and has Euler characteristic
$-2 = 2 \chi(C_{0,2})$. Therefore $\widehat{C_{0,2}}\approx \Sigma_{0,4}$, 
the four-holed sphere, and has presentation
\begin{equation*}
\pi = \langle A, B, C, D \mid A B C D = \Id \rangle.
\end{equation*}
 
The corresponding monomorphism of fundamental groups is:
\begin{align*}
\pi_1(\Sigma_{0,4}) & \hookrightarrow \pi_1(C_{0,2}) \\
A & \longmapsto W = U V \\
B & \longmapsto W' = V^{-1} U \\
C & \longmapsto \Inn(U^{-1})(W')^{-1} = U^{-2} V U \\
D & \longmapsto \Inn(U^{-1})(W)^{-1} = U^{-1} V^{-1}.
\end{align*}
 
The character ring of $\pi_1(C_{0,2})$ is the polynomial ring
$\RR_2 \cong \C[u,v,w]$ and the character ring of $\pi_1(\Sigma_{0,4})$ is the 
quotient of $\C[a,b,c,d,x,y,z]$ by the relation %%%
defined by \eqref{eq:definingeq1}.
The induced homomorphism of character rings is:

\begin{align*}
\RR_3 & \longrightarrow \RR_2 = \C[u,v,w] \\
a &\longmapsto  w \\
b &\longmapsto  u v - w = w' \\
c &\longmapsto  u v - w = w' \\
d &\longmapsto  w \\
x &\longmapsto  u^2 -2 \\
y &\longmapsto  u^2 + v^2 + w^2 - uvw -2  \\
z &\longmapsto  v^2 - u^2\big(u^2 + v^2 + w^2 - uvw -2\big) - 2,
\end{align*}
evidently satisfying the defining equation
\eqref{eq:definingeq1} for the character variety of $\Sigma_{0,4}$.

\subsection{The one-holed Klein bottle.}\label{sec:C11}
The fundamental group of the one-holed Klein bottle
$C_{1,1}$ is free of rank two, with presentation
\begin{equation*}
\pi_1(C_{1,1}) \;:=\; \langle P,Q,R,D \; \mid\; PQR = P^2 Q^2 D = \Id 
\rangle \cong \F_2.
\end{equation*}
The free generators $P,Q$ correspond to orientation-reversing simple curves
on $C_{1,1}$ and $D$ corresponds to $\partial C_{1,1}$.

The orientable double covering-space $\widehat{C_{1,1}}\longrightarrow C_{1,1}$ 
is connected, has two boundary components (since $\partial C_{1,1})$ is connected
and orientable) and has Euler characteristic
$-2 = 2 \chi(C_{1,1})$. Therefore $\widehat{C_{0,2}}\approx \Sigma_{1,2}$, 
the two-holed torus, and its fundamental group has presentation

This covering-space $\Sigma_{1,2}\longrightarrow C_{1,1}$ induces the monomorphism
\begin{align*}
\pi_1(\Sigma_{1,2}) &\hookrightarrow \pi_1(C_{1,1}) \\
U & \longmapsto P Q\\
X & \longmapsto Q P^{-1}\\
Y & \longmapsto P^2 \\
A & \longmapsto PQ^2P         \sim  P^2 Q^2\\
B & \longmapsto PQP^2QP^{-1}  \sim P^2 Q^2
\end{align*}
where $\pi_1(\Sigma_{1,2})$ is presented as:
\begin{equation*}
\langle A, B, U, X, Y \mid  A = U X Y, \quad B =  U Y X \rangle.
\end{equation*}
 
The character ring of $\pi_1(C_{1,1})$ is the polynomial ring
$\RR_2 \cong \C[p,q,r]$ and the character ring of $\pi_1(\Sigma_{1,2})$ is the 
quotient of $\C[a,b,x,y,z,u,v,w]$ by the relations
defined by \eqref{eq:twoholedtoruschars}.
The covering space $\Sigma_{1,2}\longrightarrow C_{1,1}$ induces the homomorphism
of character rings:
\begin{align*}
\RR_3 &\longrightarrow \RR_2 = \C[p,q,r] \\
u &\longmapsto  r \\
x &\longmapsto  p q - r \\
y &\longmapsto  p^2 -2  \\
v &\longmapsto  q^2 -2 \\
w &\longmapsto  r \\
z &\longmapsto  p( p r - q) - r \\
a &\longmapsto  2 - p^2 - q^2 + p r \\
b &\longmapsto  2 - p^2 - q^2 + p r 
\end{align*}
which evidently satisfies the relations of \eqref{eq:twoholedtoruschars}.

%\eqref{eq:doubleOfKleinBottle}
%%%%%
%%%%%
%%%%% where did this part disappear to?
% \subsection{Another involution of $\Sigma_{1,2}$}
We briefly %%%describe 
give
a geometric description of 
the deck transformation of the double covering 
of the $\SLtC$-character variety $V_3$ of 
the rank three free group $\F_3$.

Consider the elliptic involution $\iota$ of the torus $\Sigma_{1,0}.$
Writing $\Sigma_{1,0}$ as the quotient $\R^2/\Z^2$, this involution
is induced by the map
\begin{align*}
\R^2 & \longrightarrow \R^2 \\
u &\longmapsto - u.
\end{align*}
This involution has four fixed points, and its quotient orbifold is
$S^2$ with four branch points of order two. 
Choose a small disc $D\subset\Sigma_{1.0}$ 
such that $D$ and its image $\iota(D)$ are disjoint.
Then $\iota$ induces an involution on the complement
\begin{equation*}
\Sigma_{1,0} \setminus (D \cup \iota(D)) \;\approx \; \Sigma_{1,2}. 
\end{equation*}
This involution of the two-holed torus $\Sigma_{1,2}$ induces
the involution of $\pi_1(\Sigma_{1,2}) = \langle U, X, Y\rangle$:
\begin{align*}
U & \longmapsto U^{-1} \\
X & \longmapsto X^{-1} \\
Y & \longmapsto Y^{-1}.
\end{align*}
The quotient orbifold is a disc with four branch points of
order two.  The corresponding involution of character varieties
is the branched double covering \eqref{eq:doublecovering} 
of the character variety $V_3$ over
$\C^6$ described in Proposition~\ref{prop:onto}.
%%%%%
%%%%%
\subsection{Free groups of rank $\ge 3$.}
\index{rank three free groups}
The basic trace identity \eqref{eq:basic},
the Sum Relation \eqref{eq:sum1} and
the Product Relation \eqref{eq:product1}
imply that the trace polynomial $f_w$ of any word 
$w(X_1,X_2,\dots, X_n)$ can be written in terms
of trace polyomials of words
\begin{equation*}
X_{i_1} X_{i_2} \dots 
X_{i_1} X_{i_r} 
\end{equation*}
where $1\le i_1 < i_2 < \dots < i_r \le n$.
%%% (
The following identity, which may be found in
Vogt~\cite{Vogt}, implies that it suffices
to choose $r\le 3$:

\begin{align*}
2\, t_{1234} & \;=\;
t_1 t_2 t_3 t_4  + t_1 t_{234} + t_2 t_{341} + t_3 t_{412} + t_4 t_{123} \\
& + t_{12} t_{34} + t_{41} t_{23} 
 - t_{13} t_2 t_{24} - t_1 t_2 t_{34} - t_{12} t_3 t_4 - t_4 t_1 t_{23} - t_{41} t_2 t_3
\end{align*}

The $\SLtC$-character variety of a rank $n$ free group has dimension $3n -3$, 
as it corresponds to the quotient of the $3n$-dimensional complex manifold
$\SLtC^n$ by the generically free 
action of the $3$-dimensional group $\PGLtC$.
Thus the transcendence degree of the field of fractions of the 
{\em character ring\/} equals $3n-3$.
In contrast, the above discussion implies that this ring has %%%%
\begin{equation*}
n + \binom{n}{2} + \binom{n}{3} = \frac{n(5 + n^2)}6
\end{equation*}
generators, considerably larger than the dimension $3n-3$ of
the character variety.

% A_2=| a_2         t_23 - a_2 t_3  |
%     | 0                  a_2^(-1) |
% 
% A_3=| t_3         -1 |
%     | 1            0 |
% 

% Let $A = \C[t_1,t_2,t_3,t_{12},t_{23},t_{13}]$ be the polynomial ring
% corresponding to $\C^6$ and let $f(z) = z^2 - P(\bt) z + Q(\bt)$. 
% Then, as in Horowitz1, for any polynomial $g(z)\in A[z]$, the division
% algorithm yields unique polynomials $q(t),r_1(t),r_0(t)\in A$ such that
% \begin{equation*}
% g(t,z) = q(t) f(t) + \big(r_1(t) z + r_0(t)).
% \end{equation*}

% 
% t(XY)t(ZX)t(ZY) - t(XY)t(X)t(Y) + t(XY)^2 - t(X)t(Z)t(XZ) - 
% t(XZ)^2  + t(X)^2 - 2  - \left(t(Y)^2 + t(Z)^2 + t(YZ)^2 - t(Y)t(Z)t(YZ) - 
% 2\right) \\
% & = \left( t(X)^2 + t(Y)^2 + t(Z)^2\right) +
% \left( t(XY)^2 + t(YZ)^2 + t(ZX)^2\right) -
% \left( t(X)t(Y)t(XY) + t(Y)t(Z)t(YZ) +t(Z)t(X)t(ZX) \right) +
% t(XY)t(YZ)t(ZX) - 4
% \end{align*}    
% 
% as desired.
% 
% \begin{align*}
% t(XZ^{-1})t(X^{-1}Z^{-1}) - t(X^2) 
% t(ZXZY) & = t(ZX) t(ZY) - t(XY^{-1}) \\
% & = t(ZX)t(ZY) - \left( t(X)t(Y) - t(XY) \right) \\
% & = t(ZX)t(ZY) - t(X)t(Y) + t(XY) 
% 
% & = t(XY) t(ZXZY) - \left( t(XZ^{-1})t(X^{-1}Z^{-1}) - t(X^2)\right) \\
% & = t(XY) \left( t(ZX)t(ZY) - t(X)t(Y) + t(XY) \right)  
% -  t(XZ^{-1})t(X^{-1}Z^{-1}) + t(X^2) \\
% & = t(XY)t(ZX)t(ZY) - t(XY)t(X)t(Y) + t(XY)^2
% -  (t(X)t(Z) - t(XZ))t(X^{-1}Z^{-1}) + t(X)^2 - 2 \\
% & = t(XY)t(ZX)t(ZY) - t(XY)t(X)t(Y) + t(XY)^2 - t(X)t(Z)t(XZ) - 
% t(XZ)^2  + t(X)^2 - 2 
% 
% t(XY) \left( t(ZX)t(ZY) - t(X)t(Y) + t(XY) \right)
% \end{align*}
% 
% 
% 
%

% 

% %%%%%%%%%%%%%%%%%%%%%%%%%%%%%%%%%%%%%%%%%%%%%%%%%%%%%%%%%%%%%%%%%%%%%%%%%%%%%%
\makeatletter \renewcommand{\@biblabel}[1]{\hfill#1.}\makeatother

\newcommand{\bysame}{\leavevmode\hbox to3em{\hrulefill}\,}   % uncommented out 011708 for irma

\printindex

\end{document}